\subjclass[2010]{Primary: 14F42, 55P91 Secondary: 11E81, 19E15}
\keywords{Equivariant and motivic stable homotopy theory, equivariant Betti realization}
 \definecolor{dark-red}{rgb}{0.4,0.15,0.15}
\newcommand{\Q}{\mathbb{Q}} 
\newcommand{\QQ}{\Q}
\newcommand{\C}{\mathbb{C}} 
\newcommand{\A}{\mathbb{A}}
\renewcommand{\P}{\mathbb{P}}
\newcommand{\Z}{\mathbb{Z}}
\newcommand{\ZZ}{\mathbb{Z}}
\newcommand{\R}{\mathbb{R}}
\newcommand{\RR}{\R}
\renewcommand{\L}{\mathbb{L}}
\newcommand{\iso}{\cong}
\newcommand{\wkeq}{\simeq}
\newcommand{\Sch}{\mathrm{Sch}}
\newcommand{\Sm}{\mathrm{Sm}}
\newcommand{\sch}{\mathrm{Sch}}
\newcommand{\sm}{\mathrm{Sm}}
\newcommand{\tr}{\mathrm{tr}}
\newcommand{\RRe}{\mathrm{Re}}
\newcommand{\ssetb}{\mathrm{sSet}_{\bullet}}
\newcommand{\btop}{\mathrm{Top}_{\bullet}}
\newcommand{\spre}{\mathrm{sPre}}
\newcommand{\Sp}{\mathrm{Sp}}
\newcommand{\sspt}{\mathrm{Spt}^{\Sigma}}
\newcommand{\MS}{\mathrm{Spc}_{\bullet}}
\newcommand{\SH}{\mathrm{SH}}
\newcommand{\HH}{\mathrm{H}}
\newcommand{\MM}{\mathrm{M}}
\newcommand{\Or}{\mathrm{Or}}
\renewcommand{\SS}{\mathbb{S}}
\newcommand{\id}{\mathrm{id}}
\newcommand{\End}{\mathrm{End}}
\newcommand{\res}{\mathrm{res}}
\newcommand{\ind}{\mathrm{ind}}
\newcommand{\mcal}[1]{\mathcal{#1}}
\newcommand{\ul}[1]{\underline{\smash{#1}}}
\newcommand{\et}{\acute{e}t}
\renewcommand{\setminus}{\smallsetminus}
\DeclareMathOperator*{\colim}{\mathrm{colim}}
\DeclareMathOperator*{\holim}{\mathrm{holim}}
\DeclareMathOperator{\cofiber}{\mathrm{cofiber}}
\DeclareMathOperator{\spec}{\mathrm{Spec}}
\DeclareMathOperator{\Hom}{Hom}
\DeclareMathOperator{\rk}{rk}
\DeclareMathOperator{\Sym}{Sym}
\DeclareMathOperator{\sing}{\mathrm{Sing}}
\DeclareMathOperator{\Gal}{\mathrm{Gal}}
\numberwithin{equation}{section} %Fiddles with numbering system of the following.
\theoremstyle{plain}
\newaliascnt{theorem}{equation}  
\newtheorem{theorem}[theorem]{Theorem}  
\newaliascnt{proposition}{equation}  
\newtheorem{proposition}[proposition]{Proposition}
\newaliascnt{lemma}{equation}  
\newtheorem{lemma}[lemma]{Lemma}
\newaliascnt{corollary}{equation}  
\newtheorem{corollary}[corollary]{Corollary}
\newaliascnt{claim}{equation}  
\newaliascnt{conjecture}{equation}  
\newtheorem{conjecture}[conjecture]{Conjecture}
 \theoremstyle{definition}
\newaliascnt{definition}{equation}  
\newtheorem{definition}[definition]{Definition}
\newaliascnt{example}{equation}  
\newaliascnt{remark}{equation}  
\newtheorem{remark}[remark]{Remark}
\newcommand{\aref}[1]{\autoref{#1}}
\begin{document}
\title[Galois Equivariance and Stable Motivic Homotopy Theory]{Galois Equivariance and \\Stable Motivic Homotopy Theory}
\author{J. Heller}
\address{University of Illinois Urbana-Champaign}
\email{jeremiahheller.math@gmail.com}
% \address{}
% \thanks{}
\author{K. Ormsby}
\address{Reed College}
\email{ormsbyk@reed.edu}
% \address{}
% \thanks{}

\begin{abstract}
For a finite Galois extension of fields $L/k$ with Galois group $G$,
we study a functor from the $G$-equivariant stable homotopy category
to the stable motivic homotopy category over $k$ induced by the
classical Galois correspondence.  We show that after completing at a prime and $\eta$ (the motivic Hopf map) this results in a full and
faithful embedding whenever $k$ is real closed and $L=k[i]$. 
It is a full and faithful embedding after $\eta$-completion if a motivic version of Serre's finiteness theorem is valid.
We
produce strong necessary conditions on the field extension $L/k$ for this functor to
be full and faithful.  
Along the way, we produce several results on the stable $C_2$-equivariant Betti realization functor and prove convergence theorems for the $p$-primary $C_2$-equivariant Adams spectral sequence.
\end{abstract}

\maketitle
% \tableofcontents

\section{Introduction}
The stable versions of equivariant and motivic homotopy theory play 
important roles in the geometry of manifolds, algebraic
cycles, and quadratic forms.  Stable equivariant
homotopy theory is the study of topological spaces equipped with a
group action up to stable
equivariant weak equivalence.  It has recently found stunning application \cite{HHR}
to the Kervaire problem, playing an essential role in the proof that there are no smooth framed
manifolds of Kervaire invariant one in dimensions greater than
$126$.  
Via the work of Devinatz and Hopkins \cite{DevHop}, stable
equivariant homotopy theory 
controls the chromatic decomposition of stable homotopy theory. It is also essential to the
study of topological Hochschild homology \cite{BHM:cyclo}.

Motivic homotopy theory is a homotopy theory of schemes in
which the affine line plays the role of the unit interval.  Its study
was initiated by Morel and Voevodsky \cite{MV:A1} in work related to
Rost and Voevodsky's resolution of the Bloch-Kato conjectures on Milnor
$K$-theory and Galois cohomology \cite{Voevodsky:Z2,Voevodsky:Zl}.  Its stable version plays an
essential role in the theory of motives and motivic cohomology
\cite{OrangeBook}.  This
circle of ideas led to the resolution of the Milnor conjecture on
quadratic forms \cite{OVV} and the Quillen-Lichtenbaum conjecture, a powerful result linking 
algebraic $K$-theory and values of Dedekind $\zeta$-functions via a
``homotopy limit problem'' phrased in the language of stable
equivariant homotopy \cite{DF:algetK}.  Stable motivic
homotopy theory also opens new vistas, such as the study of algebraic
cobordism \cite{V:ICM}.

\emph{The purpose of this paper is to study how equivariant and
  motivic stable homotopy theory are
  related via the classical Galois correspondence.}

 \vspace{.4cm}
 
 A fundamental computation in stable motivic homotopy theory 
 is the identification of the endomorphism ring of the motivic sphere spectrum by Morel \cite{Morel:sphere}. In \emph{loc.~cit.}~Morel shows that  
  $\End_{\SH_{k}}(\SS_{k})$  is isomorphic to the Grothendieck-Witt
  group $GW(k)$ of nondegenerate  quadratic forms over a perfect field $k$.  
  It is now a classical fact, going back to Segal and tom Dieck,  that the endomorphism ring $\End_{\SH_{G}}(\SS_{G})$ of the equivariant sphere spectrum in the equivariant stable homotopy category is equal to the Burnside ring $A(G)$
  of finite $G$-sets.

When $L/k$ is a finite Galois extension with Galois group $G$, Dress
\cite[Appendix B]{Dress} (see also, \cite[\S 4]{BeaulieuPalfrey}) constructs a ring homomorphism 
$A(G)\to GW(k)$ relating these two fundamental invariants. In fact, the Galois correspondence can be stabilized to yield
 a strong symmetric monoidal triangulated functor from the stable
 $G$-equivariant homotopy category to the stable motivic homotopy
 category over $k$,
 $$
 c_{L/k}^*:\SH_{G}\to \SH_{k}.
 $$ 
This relies on work of P. Hu \cite{Hu:base}. 
 When $L=k$,  
 $c_{L/L}^*$ is simply the functor  induced by sending a simplicial
 set to its associated constant motivic space.  When $L=k$ is
 algebraically closed of characteristic zero, Levine
 \cite{Levine:comparison} has recently shown that $c^*_{L/L}$ is a
 full and faithful embedding, but this is not the general case for
 $c_{L/k}^*$.  Indeed,  
 the Burnside ring $A(G)$ is always torsion free while $GW(k)$ can  in
 general contain torsion, which eliminates the possibility of
 $c_{L/k}^*$ inducing an isomorphism $A(G)\cong GW(k)$. 
 However, if $k$ is a real closed
 field then $GW(k)$ and $A(C_{2})$ are isomorphic so one might still hope that Levine's embedding theorem can be generalized to real closed fields. Our main result, proved in \aref{thm:textmain} and \aref{thm:textmain2} below, is that this indeed is the case after $(p,\eta)$-completion.  Here $p$ is a prime and $\eta$ is the motivic Hopf map induced by the canonical projection $\AA^2\smallsetminus 0\to \mathbb{P}^1$.  (Details on $(p,\eta)$-completion are provided at the start of \aref{sec:main}.)  Moreover, the functor is a full and faithful embedding after $\eta$-completion alone if 
 $\pi_{n}(\SS_{k})_{\Q} =0$ for $n>0$.  The vanishing of these higher homotopy groups would be a motivic version of the classical result of Serre on the homotopy groups of spheres and is already known to be true when $-1$ is a sum of squares in the basefield.

\begin{theorem}\label{thm:main}
Let $k$ be a real closed field and $L=k[i]$ be its algebraic closure. Then for any prime $p$ the functor
$$
c_{L/k}^*:\SH_{C_2}\to \SH_k
$$
is a full and faithful embedding after $(p,\eta)$-completion. If $\pi_{n}(\SS_{k})_{\Q} = 0$ for $n>0$ it is a full and faithful embedding after $\eta$-completion.
\end{theorem}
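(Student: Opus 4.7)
The strategy is to show that the unit $X \to \RRe_{C_2} c^*_{L/k} X$ is a $(p,\eta)$-completion equivalence for every $X \in \SH_{C_2}$, where $\RRe_{C_2}\colon \SH_k \to \SH_{C_2}$ is the equivariant Betti realization right adjoint of $c^*_{L/k}$ (defined via complex points with their Galois $C_2$-action, using $L = k[i]$). Full-and-faithfulness of $c^*_{L/k}$ on $(p,\eta)$-complete mapping spectra is equivalent to this condition. Since $c^*_{L/k}$ is a left adjoint that preserves compact objects and $\SH_{C_2}$ is compactly generated by the orbit spectra $\Sigma^\infty C_2/H_+$ for $H \le C_2$, it suffices to check the unit on these generators, reducing the problem to showing that $c^*_{L/k}$ induces isomorphisms on bigraded stable stems after $(p,\eta)$-completion.

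To verify the stem-level isomorphism I would compare the $C_2$-equivariant and motivic Adams spectral sequences at each prime $p$. The paper establishes convergence of the former to $\pi^{C_2}_{\star}(\SS)^{\wedge}_{p,\eta}$ with $E_2$-term $\Ext$ over the $C_2$-equivariant dual Steenrod algebra. The motivic analogue over the real closed field $k$ should, under analogous hypotheses, converge to $\pi_{\star,\star}(\SS_k)^{\wedge}_{p,\eta}$ with $E_2$-term $\Ext$ over the motivic dual Steenrod algebra. The functor $c^*_{L/k}$ sends the equivariant $\mathbb{F}_p$-Eilenberg-MacLane spectrum to the motivic one, and over a real closed base these two Steenrod algebras coincide; thus $c^*_{L/k}$ induces an isomorphism of $E_2$-pages, and comparison of convergent spectral sequences yields the desired iso on abutments.

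For the $\eta$-completion (but not $p$-completion) statement, I would assemble the $(p,\eta)$-complete case at every prime with the rational case via an arithmetic fracture square. Rationally, $\pi^{C_2}_{\star}(\SS)_{\QQ} \cong A(C_2)_{\QQ}$ is concentrated in stem zero by classical Serre finiteness, and the standing hypothesis $\pi_n(\SS_k)_{\QQ} = 0$ for $n > 0$ concentrates the rational motivic stems in stem zero as well; the stem-zero comparison is then the Dress homomorphism $A(C_2) \to GW(k)$, which is an isomorphism for real closed $k$ by the discussion preceding the theorem.

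The main obstacle I anticipate is matching the Adams $E_2$-pages: identifying $c^*_{L/k} H\underline{\mathbb{F}_p}$ with the motivic Eilenberg-MacLane spectrum and controlling the motivic dual Steenrod algebra over a non-algebraically-closed real closed base (particularly at odd primes). Establishing $(p,\eta)$-complete convergence of the motivic Adams spectral sequence over real closed fields is the other delicate technical input, but the paper's equivariant convergence theorems and its results on $C_2$-equivariant Betti realization should transport across $\RRe_{C_2}$ to supply the necessary machinery.
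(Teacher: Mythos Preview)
Your overall architecture---reduce to stable stems on the orbit generators, compare Adams spectral sequences for the $(p,\eta)$-complete part, and assemble with the rational case via an arithmetic fracture square for the $\eta$-complete statement---matches the paper's. But the heart of the argument, the Adams comparison, is run in the wrong direction and rests on a false identification.

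You assert that $c^*_{L/k}$ carries the equivariant Eilenberg--MacLane spectrum $\HH\ul{\mathbb{F}_p}$ to the motivic $\HH\mathbb{F}_p$. This fails: the image of $c^*_{L/k}$ lies in the localizing subcategory of $\SH_k$ generated by finite \'etale $k$-algebras, and the motivic Eilenberg--MacLane spectrum (built from symmetric powers of $\mathbb{P}^n$, encoding algebraic cycles) is not of this form. So $c^*_{L/k}$ does not take the equivariant Adams tower to the motivic one, and there is no $E_2$-comparison along $c^*_{L/k}$. The paper runs the comparison the other way, along the $C_2$-equivariant Betti realization $\RRe^{C_2}_{B,\phi}:\SH_k\to\SH_{C_2}$, which is a \emph{left} adjoint (not the right adjoint of $c^*_{L/k}$, as you write) and which does satisfy $\RRe^{C_2}_{B,\phi}(\HH\mathbb{F}_p)\simeq \HH\ul{\mathbb{F}_p}$. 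The $E_1$-pages are then matched not by an identification of Steenrod algebras but by an equivariant Suslin--Voevodsky theorem for torsion effective motives, applied to the weight-zero part; the relation $\RRe^{C_2}_{B,\phi}\circ c^*_{L/k}=\id$ is what transfers the resulting isomorphism on stems back to $c^*_{L/k}$.

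There is a second gap: Betti realization requires a real embedding $\phi:k\hookrightarrow\R$, which an arbitrary real closed field need not admit (e.g.\ one of large transcendence degree). The paper handles this by writing $k$ as a filtered union of real closed subfields of $\R$ and invoking essentially smooth base change; your proposal does not address this.
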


It is a consequence of \cite[Theorem 1]{HKO} that $(2,\eta)$-completion is the same as $2$-completion when $k$ is real closed, so the above theorem specializes at $p=2$ to say that $c_{L/k}^*$ is full and faithful after $2$-completion when $k$ is real closed.

\begin{remark}
In order to deduce integral full faithfulness of $c_{L/k}^*$ from $\eta$-complete full faithfulness, one would need to control the $\eta$-periodic (\emph{i.e.}, $\eta$-inverted) stable homotopy categories as well.  Recent work of Guillou-Isaksen and Andrews studies the $\eta$-periodic $2$-complete sphere over $\mathbb{C}$ from a computational perspective, but there aren't many techniques developed for working with purely $\eta$-periodic objects in general.
\end{remark}

\subsection{Computational ramifications}
 Our embedding result
has significant implications for (Picard-graded) stable homotopy groups of
spheres in the $C_2$-equivariant and real closed motivic settings.
Recall that the representation spheres $S^{m+n\sigma}$ are invertible
in $\SH_{C_2}$ where $S^{m+n\sigma}$ is the one-point compactification
of $m$ copies of the one-dimensional real trivial representation and
$n$ copies of the real sign representation.  As such $\ZZ\oplus
\ZZ\{\sigma\}$ is a subgroup of the Picard group of invertible objects
in $\SH_{C_2}$, and it is common to consider the bigraded stable
homotopy groups $\pi_{m+n\sigma}X = [S^{m+n\sigma},X]_{C_2}$ of a
$C_2$-spectrum $X$.  When $k$ is real closed and $L=k[i]$,
\aref{thm:main} implies that $c_{L/k}^*:\pi_{m+n\sigma} (\SS_{C_2})^{\wedge}_{p,\eta}
\cong
[c_{L/k}^* S^{m+n\sigma},(\SS_k)^{\wedge}_{p,\eta}]_k$.  We will
see that $c_{L/k}^*S^{m+n\sigma}\simeq S^m \wedge (S^L)^{\wedge n}$ where
$S^L$ is the unreduced suspension of $\spec(L)$. 
By a
theorem of P. Hu \cite{Hu:base}, $S^L$ is invertible and $\ZZ\oplus
\ZZ\{L\}$ is a subgroup of the Picard group of $\SH_k$.  We emphasize
that $S^L$ is \emph{not} weakly equivalent to $\A^1\smallsetminus
\{0\}$ and  this is \emph{not} the ``standard'' bigrading in motivic
homotopy theory. 

Regardless, if we set $S^{m+nL} = S^m\wedge (S^L)^{\wedge n}$ and make
the natural definition of $\pi_{m+nL}$, we see that $c_{L/k}^*$ induces
isomorphisms
\[
  \pi_{m+n\sigma} (\SS_{C_2})^{\wedge}_{p,\eta} \xrightarrow{\iso} 
  \pi_{m+nL}(\SS_k)_{p,\eta}^{\wedge}
\]
for all $m,n\in \ZZ$ under the conditions of  \aref{thm:main}.
It is an observation of D. Dugger that the same result does not hold
if $S^L$ is replaced by $\A^1\smallsetminus \{0\}$.

The $C_2$-equivariant stable stems were studied by Araki
and Iriye via Toda-style methods.  In \cite{AI:involutions}, they
compute the groups $\pi_{m+n\sigma}\SS_{C_2}$ for $m+n\le 8$.  In
particular, they compute the groups $\pi_m \SS_{C_2}$ for $m\le 8$, so
\aref{thm:main} implies the following
corollary. 

\begin{corollary} \label{cor:comp}
If $k$ is a real closed field, then 
$\pi_m (\SS_k)^{\wedge}_{2}$, $0\le m\le 8$, is the $2$-completion of the values
displayed in the following table.

\begin{center}
\renewcommand{\arraystretch}{1.2}
\begin{tabular}{@{}l c c c c c c c c c @{}}
\toprule
$ m$ &$0$&$1$&$2$&$3$&$4$&$5$&$6$&$7$&$8$\\ \midrule
$ \pi_m\SS_k$
&$\ZZ^2$ 
&$(\ZZ/2)^3$
&$(\ZZ/2)^3$
&$\begin{aligned}( \ZZ & /24)^2 \\ \oplus & \ZZ/8 \end{aligned}$
&$\ZZ/2$
&$0$
&$(\ZZ/2)^3$
&{$\begin{aligned}[c]& (\ZZ/240)^2\oplus \\& \ZZ/16\oplus \ZZ/2\end{aligned}$}
&$(\ZZ/2)^7$ \\
\bottomrule
\end{tabular}
\end{center}
\end{corollary}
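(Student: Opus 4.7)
The plan is to combine \aref{thm:main} at the prime $p=2$ with the Araki--Iriye calculation of the low-dimensional $C_2$-equivariant stable stems. As noted immediately after \aref{thm:main}, over a real closed field $(2,\eta)$-completion coincides with ordinary $2$-completion by \cite[Theorem 1]{HKO}. Furthermore, applying $c_{L/k}^*$ to the $m$-sphere $S^m = S^{m+0\cdot\sigma}$ yields the motivic sphere $S^m$, via the identification $c_{L/k}^*S^{m+n\sigma}\simeq S^m\wedge (S^L)^{\wedge n}$ recorded in the excerpt. Consequently, the full faithfulness of $c_{L/k}^*$ after $2$-completion produces an isomorphism
\[
  \pi_m(\SS_{C_2})_2^\wedge \xrightarrow{\iso} \pi_m(\SS_k)_2^\wedge
\]
for every integer $m\ge 0$.

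I would then cite Araki and Iriye \cite{AI:involutions}, who compute $\pi_m\SS_{C_2}$ for $m\le 8$ to be precisely the values displayed in the table. Each such group is finitely generated, so the ordinary and derived $2$-completions agree and the homotopy groups of the $2$-completed spectrum are the $2$-completions of the homotopy groups, i.e.\ $\pi_m(\SS_{C_2})_2^\wedge \cong (\pi_m\SS_{C_2})_2^\wedge$. Composing with the displayed isomorphism identifies $\pi_m(\SS_k)_2^\wedge$ with the $2$-completion of the tabulated value, which is the content of the corollary.

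The argument is essentially a packaging exercise: the substantive content lies entirely in \aref{thm:main} and the Araki--Iriye computation. The only items to verify are the identification $c_{L/k}^*S^m\simeq S^m$ (immediate from the $n=0$ case of the representation-sphere formula), the replacement of $(2,\eta)$-completion by $2$-completion (from \cite{HKO}), and the commutation of $\pi_m$ with $2$-completion (from finite generation in the relevant range). None of these poses a genuine obstacle, so in this sense there is no single ``main obstacle'': the corollary is a clean consequence of the embedding theorem applied to a known topological calculation.
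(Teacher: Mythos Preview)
Your proposal is correct and follows exactly the approach the paper takes: the corollary is stated as an immediate consequence of \aref{thm:main} at $p=2$ (using \cite[Theorem 1]{HKO} to identify $(2,\eta)$-completion with $2$-completion) together with the Araki--Iriye computation of $\pi_m\SS_{C_2}$ for $m\le 8$. Your added remarks on $c_{L/k}^*S^m\simeq S^m$ and on commuting $\pi_m$ with $2$-completion via finite generation are the obvious fillers the paper leaves implicit.
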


In addition, in \aref{cor:Morel} we show that the 2-complete version of Morel's conjecture on $\pi_1(\SS_k)$  holds for real closed fields.  The integral version of this conjecture says that, for a general base field $F$, there is a
short exact sequence
\[
  0\to K^M_2(F)/24\to \pi_1\SS_F\to F^\times/(F^\times)^2\oplus
  \ZZ/2\to 0.
\]
The second-named author and P.~{\O}stv{\ae}r have previously verified the integral version of 
Morel's conjecture for fields of cohomological dimension less than
three \cite{OrmsbyOstvaer:pi1}.

While these immediate applications transfer information from
$C_2$-equivariant to motivic homotopy over a real closed field,
future work should leverage motivic homotopy to produce
$C_2$-equivariant computations.  In particular, the dual motivic Steenrod
algebra is smaller than its equivariant counterpart, making Adams and
Adams-Novikov spectral sequence computations more approachable.  The
authors plan to apply these tools over the field 
$\RR$ of real numbers (with the above exotic Picard grading) in order
to extend our computational understanding of the stable
$C_2$-equivariant homotopy category.

\subsection{Galois correspondence and motivic homotopy theory}
An intriguing viewpoint on our embedding theorem is as a generalization of the classical Galois correspondence in the case of real closed fields. Indeed, if $L/k$ is a finite Galois extension with Galois group $G$ then 
the Galois correspondence is an equivalence between the category of finite $G$-sets and the category of finite \'etale $k$-algebras.
Restricting to the orbit category, this correspondence gives the functor
$$
c_{L/k}:\Or_{G}\to \Sm/k
$$ 
to smooth $k$-schemes which is explicitly given on objects by $c_{L/k}(G/H) = \spec(L^{H})$. 
As recorded in \aref{thm:construction}, this functor can be stabilized, yielding a strong symmetric monoidal, triangulated functor
$$
c_{L/k}^*:\SH_{G}\to \SH_{k}.
$$
It is not hard to see that the unstable version of this functor
induces a full and faithful embedding from the unstable
$G$-equivariant homotopy category to the unstable motivic homotopy
category over $k$ (see \aref{lemma:uff}). Note, though, that the
stable equivariant homotopy category is formed by stabilizing with
respect to representation spheres while the motivic homotopy category
is formed by stabilizing with respect to $\P^1$.  Hence there is no
reason for this pleasant relationship between the two categories to
remain after stabilization, yet it does in special cases. In fact, we
can say something slightly more precise.  The image of $c_{L/k}^*$ is always contained in the subcategory $\mcal{E}_{k}$ of $\SH_{k}$ which is generated by the finite \'etale $k$-algebras.\footnote{Here ``generated'' means that $\mcal{E}_{k}$ is the smallest localizing subcategory of $\SH_{k}$ containing all (suspension spectra of) finite \'etale $k$-algebras. } Our result can thus be rephrased as an equivalence of triangulated categories between $\SH_{C_{2}}$ and $\mcal{E}_{k}$ when $k$ is real closed.

This translation of stable motivic homotopy over $k$ into stable
$G$-equivariant homotopy for $G = \Gal(L/k)$ will not work for general
finite Galois extensions $L/k$.  Indeed, in 
\aref{thm:necessary} we show that $c^*_{L/k}$ induces an isomorphism $A(G)\to GW(k)$ if and only if
either $k$ is quadratically closed and $L=k$, or $k$ is euclidean\footnote{A field $k$ is euclidean if $-1$ is not a sum of squares in $k$ and $[k^\times \smash : \, (k^\times)^2]=2$.}
and $L=k[i]$. This implies in particular that $c^*_{L/k}$ cannot be full and faithful if $L/k$ is not of this special form.

\subsection{Outline of the proof}
Our main theorem is directly inspired by M.~Levine's theorem on full
faithfulness of the constant presheaf functor \cite{Levine:comparison}, and our methods
are, largely, in the same spirit as his.  That said, Levine's
arguments 
rely on the convergence of the slice spectral sequence, a result not
yet known over fields with infinite cohomological dimension. To remedy
this situation, we compare the motivic and equivariant Adams spectral
sequences.

Let $k$ be real closed and set $L=k[i]$ so that $G=C_2$ is cyclic of
order $2$.  By a density argument, to show that  $c_{L/k}^*$ is full and faithful after $\eta$-completion, it suffices to show that  $c_{L/k}^*$ induces isomorphisms
\[
  [S^{n}\wedge X,Y]_{C_2}\xrightarrow{\iso} [S^{n}\wedge c_{L/k}^*(X)^{\wedge}_{\eta},c_{L/k}^*(Y)^{\wedge}_{\eta}]_k
\] 
where $X,Y$ take values in the set $\{(\SS_{C_2})^{\wedge}_{\eta}, C_{2\,+}\wedge (\SS_{C_2})^{\wedge}_{\eta}\}$. The key case is when $k$ admits a real embedding and in this case we can use the $C_{2}$-equivariant Betti realization. The computation is broken up into pieces: the
$(p,\eta)$-completed sphere (for any prime $p$) and the rationalized $\eta$-complete sphere. The computation concerning the latter object relies on the conjectural motivic version of Serre's finiteness theorem and so the $\eta$-complete version of the embedding theorem is conditional upon the validity of this conjecture. Of course, the full and faithful embedding of $(p,\eta)$-completed homotopy categories holds independent of this conjecture. 
In the $(p,\eta)$-complete case, we identify the $C_2$-equivariant Betti realization of the motivic
Adams spectral sequence with the $C_2$-equivariant Adams spectral
sequence based on the Bredon cohomology spectrum
$\HH\ul{\ZZ/p}$.  
We establish an equivariant version of Suslin-Voevodsky's theorem on Suslin homology which implies that  the realization
induces an isomorphism on weight zero components of the $E_1$-pages from which we deduce the result in this case.

\subsection{Comments on realization and profinite Galois extensions}
We conclude by making a few comments on the role of
 ``realization'' functors.   M.~Levine uses the Betti realization
functor $\RRe_B:\SH_L\to \SH$ for algebraically closed subfields $L$ of $\C$ to prove his full faithfulness theorem in
\cite{Levine:comparison}.  Since $\RRe_B\circ c^* = \id$, the constant presheaf functor is always faithful for any $k\subseteq \C$.  Levine's
innovation was  to compare the Betti realization of the slice spectral sequence for
the motivic sphere spectrum over an algebraically closed field with
the Novikov spectral sequence in topology.  An isomorphism between the $E_{2}$-terms of 
these spectral sequences implies an isomorphism on stable homotopy
groups of spheres which ultimately implies the fullness result.

When $k$ has a real embedding, then there is an associated
$C_2$-equivariant Betti realization $\RRe_B^{C_2}:\SH_k\to \SH_{C_2}$. 
As previously mentioned, we cannot use the slice spectral sequence to prove our embedding theorem, but our arguments still rely on using (equivariant) Betti realization to compare some spectral sequences (namely the motivic and equivariant Adams spectral sequences). Again, faithfulness of $c^*_{k[i]/k}$ is easy because $\RRe_B^{C_2}\circ c_{k[i]/k}^* = \id$.

 Suppose $L =
\bar{k}$ is the algebraic closure of $k$ and $G$ is the absolute Galois group $\Gal(L/k)$, which is a profinite group.
A natural question
is whether the main theorem of this paper extends to a full
faithfulness theorem for $G$-equivariant stable homotopy inside of
$\SH_k$.  In order to precisely state such a question, though, one
would need an appropriate notion of genuine $G$-spectra and $G$-stable
homotopy when $G$ is profinite.  Proposals for this category are
contained in \cite{Fausk:pro,Quick:pro}, and C.~Barwick has communicated
ideas on an alternate formulation to the authors.  Whichever model is
chosen, one would hope that it would admit well-behaved functors
\[
  c_{L/k}^*:\SH_G\to \SH_k\quad\text{and}\quad \RRe_B^G:\SH_k\to \SH_G
\]
such that $\RRe_{B}^G\circ c_{L/k}^*$ is some form of pro-completion
of the identity functor.    This would result in a pro-faithfulness
theorem, at which point one could examine fullness properties as well.
The authors hope to pursue this line of inquiry
in future research.

\subsection{Organization of the paper}
We prove our main theorem in \S\ref{sec:main} according to the
strategy outlined above.  We then deduce several
interesting corollaries, 
including our Picard-graded homotopy
comparison (\aref{cor:groups}), 
Morel's conjecture on $\pi_1\SS_k$ for
real closed fields (\aref{cor:Morel}), and a relative version of our
theorem comparing full faithfulness of $c_{L/L}^*$ and full
faithfulness of $c_{L/k}^*$ (\aref{cor:rel}).

In \S\ref{sec:classical}, we study the effect of
$c_{L/k}^*$ on the endomorphism ring of the sphere spectrum.  
We show that it induces an isomorphism if and only if either $k$ is quadratically closed and
$L=k$, or $k$ is euclidean and $L=k[i]$ (\aref{thm:necessary}); 
in particular this places strong conditions on $L/k$ necessary in order for $c_{L/k}^*$
to be full and faithful.  

We collect several technical constructions and results in \S\ref{sec:constructions}.
In \S\ref{sub:models} and \S\ref{sub:stablemodels} we recall some definitions and facts about different model structures we use.
 With these preliminaries in order, the unstable and stable versions of $c^*_{L/k}$ are constructed in
\S\ref{sub:galois}.  In \S\ref{sub:betti}
we record the construction of and
some well-known results on the stable $C_2$-equivariant Betti
realization functor arising from a real embedding of fields.  In \S\ref{sub:change} we prove basic
compatibility results between $c_{L/k}^*$ and various change-of-group
and change-of-base functors.  Finally, in \S\ref{sub:betti2} we
study the effect of stable $C_2$-equivariant Betti realization on
motivic cohomology. In particular, we show that the Beilinson-Lichtenbaum conjectures can be rephrased for real closed subfields of $\R$
in terms of Bredon cohomology (\aref{thm:BL}) 
and we establish an equivariant version of a theorem of
Suslin-Voevodsky for torsion effective motives (\aref{thm:SV}).

\subsection{Relation to other work}
It is interesting to contrast the subject of this paper with Hu, Kriz,
and Ormsby's 
stable equivariant motivic homotopy theory \cite{HKO:equimot}. 
In that setup one studies smooth schemes equipped
with a $G$-action, $G$ a finite group. It should be emphasized that this group does not necessarily have any relationship
with the automorphisms of a field extension.  
In contrast, in the present work we
study the image of the stable $\Gal(L/k)$-equivariant homotopy category
inside the stable \emph{nonequivariant}
motivic homotopy category over $k$.  It would be interesting to  combine these  notions of
equivariance and geometry further by studying $(G,\Gal(L/k))$-homotopy inside of the
$G$-motivic homotopy category over $k$. 

\subsection{Notation and conventions} 
Throughout $k$ is a perfect field and $L/k$ is a finite Galois extension with Galois group $G$.  
For a finite group $G$ we write $\SH_{G}$ for the (genuine) stable equivariant homotopy category.
We write $\Sm/k$ for the category of smooth schemes of finite type
over a base field $k$ and we write $\SH_{k}$ for the stable motivic
homotopy category. We use the notation $[-,-]_G = \SH_G(-,-)$ and
$[-,-]_k = \SH_k(-,-)$ for morphism sets in respective stable homotopy
categories.  Our indexing convention for motivic spheres is that
$S^{a +b\alpha} := (S^{1})^{\wedge a}\wedge
(\A^{1}\setminus\{0\})^{\wedge b}$. When $G=C_{2}$ we write
$S^{\sigma}$ for the sign-representation sphere and set
$S^{a+b\sigma}:= (S^1)^{\wedge a}\wedge (S^{\sigma})^{\wedge b}$.  In
the special case $a=b=0$, we write $\SS_k$ and $\SS_G$ for the sphere
spectra in the motivic and equivariant categories, respectively.

For the sake of typographical simplicity, we do not use any special notational device for derived functors in \S\ref{sec:main} and \S\ref{sec:classical}, where we only work on the level of homotopy categories. In
\S\ref{sec:constructions} we work in both model categories and associated homotopy categories and in this section we use ``derived functor notation'' (i.e. $\L F$ and $\R F$ respectively for left and right derived functor of $F$).

\subsection*{Acknowledgements}
We are grateful to Marc Levine and Dan Dugger for spotting errors in previous drafts of this paper.
We thank Paul Arne {\O}stv{\ae}r, Kirsten Wickelgren, and the anonymous referee for helpful comments. We have also benefitted from the Algebraic Topology semester at MSRI in Spring 2014.  
The first author also thanks the MIT math department for generous hospitality during the preparation of this paper.
The second author gratefully acknowledges support from the NSF.

\section{Embedding theorem}\label{sec:main}
Let $L/k$ be a Galois extension of fields with Galois group $G$. As mentioned in the introduction, the functor $\Or_{G}\to \sm/k$ which  is defined on objects by $G/H\mapsto \spec(L^{H})$,  induces a functor 
$c^*_{L/k}:\SH_{G}\to \SH_{k}$ on stable homotopy categories. Details on this construction are given in \aref{sec:constructions}.

Our embedding result concerns certain completions of the functor $c^*_{L/k}$.
Recall that the $(p,\eta)$-completion $X^{\wedge}_{p,\eta}$ of a motivic spectrum is defined to be the Bousfield localization of $X$ at $\SS_{k}/(p,\eta):=\cofiber(S^{\alpha}\wedge \SS_k/p\to \SS_k/p)$. 
We have a motivic equivalence  
$(\SS_k)^\wedge_{p,\eta} \wkeq \holim \SS_{k}/(p^n,\eta^n)$.
Similarly, for a $C_2$-spectrum $Y$, define $Y^{\wedge}_{p,\eta}$ to be the Bousfield localization of $Y$ at the spectrum $\SS_{C_2}/(p,\eta)$.\footnote{The map $\eta:\SS^{\sigma}\to S^0$ in $\SH_{C_2}$ is the stable map induced by $\C^2-\{0\}\to\C P^2$.} We have an equivariant equivalence $(\SS_{C_2})^{\wedge}_{p,\eta} =\holim \SS_{C_2}/(p^n,\eta^n)$. 
Write $(\SH_{k})^{\wedge}_{p,\eta}\subseteq \SH_k$ and $(\SH_{C_{2}})^{\wedge}_{p,\eta}\subseteq \SH_{C_2}$ respectively for the full subcategories of $(p,\eta)$-complete objects. Note that these are triangulated subcategories. Write
$(c^*_{L/k})^{\wedge}_{p,\eta} := (-)^{\wedge}_{p,\eta}\circ c^*_{L/k}$. 
In this section we prove that if $k$ is a real closed field and $L=k[i]$ then 
$$
(c^*_{L/k})^{\wedge}_{p,\eta}:(\SH_{C_{2}})^{\wedge}_{p,\eta}\to (\SH_{k})^{\wedge}_{p,\eta}
$$  
is a full and faithful embedding for any prime $p$. 
Additionally if $\pi_{n}(\SS_k)_{\Q}=0$ for any $n>0$ (see \aref{Serrefiniteness}) then the functor 
$$
(c^*_{L/k})^{\wedge}_{\eta}:(\SH_{C_{2}})^{\wedge}_{\eta}
\to (\SH_{k})^{\wedge}_{\eta} 
$$ 
is full and faithful without $p$-completion.
This is proved in \aref{thm:textmain} and \aref{thm:textmain2}. 
The main step is to show that the $C_{2}$-equivariant Betti realization induces isomorphisms 
\begin{enumerate}
\item[(i)] $\RRe^{C_{2}}_{B,\phi}:  [S^n,(\SS_{k})^{\wedge}_{p,\eta}]_{k} \xrightarrow{\iso} 
[S^n,  (\SS_{C_{2}})^{\wedge}_{p,\eta}]_{C_{2}}$, and
\item[(ii)] $\RRe^{C_{2}}_{B,\phi}:  [\spec(L)_+\wedge S^n,(\SS_{k})_{p,\eta}^{\wedge}]_{k} \xrightarrow{\iso} 
[C_{2\,+}\wedge S^n,  (\SS_{C_{2}})^{\wedge}_{p,\eta}]_{C_{2}}$
\end{enumerate}
whenever there is a real embedding $\phi:k\hookrightarrow \R$.

\subsection{Completing at {$p$} and {$\eta$}} 
Let $p$ be a prime.
We analyze the image, under equivariant Betti realization, of the motivic Adams spectral sequence over a real closed subfield of $\R$. 
Let $\HH\ZZ/p$ denote the mod-$p$ motivic cohomology spectrum.  The
motivic Adams spectral sequence 
for $\SS_k$ arises as the totalization
spectral sequence of the semi-cosimplicial $\P^1$-spectrum with
$s$-th spectrum $(\HH\ZZ/p)^{\wedge s}$ and co-face maps induced by the
unit $\SS_k\to \HH\ZZ/p$.  We use the following specialization of a
theorem of P.~Hu, I.~Kriz, and the second author.

\begin{theorem}[{\cite[Theorem 1]{HKO}}]\label{thm:MASSconv}
Let $k$ be a real closed field, $L=k[i]$, and let $Y$ be either $\SS_k$ or $\spec(L)_{+}$. The motivic Adams spectral sequence
\[
  E_1^{s,t} = [S^{t}\wedge Y, (\HH\ZZ/p)^{\wedge s}]_{k}\implies 
  [S^{t-s}\wedge Y, (\SS_k)^\wedge_{p,\eta}]_{k}.
\]
is strongly convergent. If $p=2$ then 
$[S^{t-s}\wedge Y, (\SS_k)^\wedge_{2,\eta}]_{k} = [S^{t-s}\wedge Y, (\SS_k)^\wedge_2]_{k}$.
\end{theorem}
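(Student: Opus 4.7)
The statement is a direct specialization of \cite[Theorem 1]{HKO}, so my plan is to verify that the hypotheses of that general convergence theorem apply in the present setting rather than to rederive it. The HKO theorem asserts strong convergence of the $\HH\ZZ/p$-based motivic Adams spectral sequence against the $(p,\eta)$-completion of the target for cellular motivic spectra of finite type over base fields whose mod-$p$ motivic cohomology of the point satisfies a suitable bounded vanishing / finite generation hypothesis. I would verify this as follows. A real closed $k$ with $L=k[i]$ has very well-understood motivic cohomology: at odd primes $p$ the degree-$2$ extension $L/k$ has order prime to $p$, so a transfer-restriction argument identifies $H^{*,*}(k;\ZZ/p)$ with the $C_2$-invariants of $H^{*,*}(L;\ZZ/p)$, and over an algebraically closed field of characteristic zero the latter is simply $\ZZ/p[\tau]$; at $p=2$, Voevodsky's proof of the Milnor conjecture gives $H^{*,*}(k;\ZZ/2)=\ZZ/2[\tau,\rho]$. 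Both computations land in the range covered by the HKO hypothesis, so the convergence conclusion is available.

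Next I would check that the two input spectra are admissible. The spectrum $Y=\SS_k$ is the unit and is evidently cellular of finite type. The spectrum $Y=\spec(L)_+$ is a two-cell complex: using the \'etale transfer associated to the degree-$2$ cover $\spec(L)\to\spec(k)$ and the induced unit/counit sequence, $\spec(L)_+$ is exhibited as an extension of $\SS_k$ by a motivic sphere (in the bigrading used throughout the paper), and in particular is cellular of finite type. Plugging either choice of $Y$ into HKO yields exactly the $E_1$-term displayed and convergence to the $(p,\eta)$-completion of $[S^{t-s}\wedge Y,\SS_k]_k$.

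For the second assertion --- that $(\SS_k)^{\wedge}_{2,\eta}=(\SS_k)^{\wedge}_2$ when $p=2$ --- I would again appeal to the cited theorem, noting that its content here is the nilpotence of $\eta$ on the $2$-complete sphere over a real closed field. This follows from Morel's identification of the weight-zero stable stems with Milnor--Witt $K$-theory $K^{MW}_*(k)$, together with the fact that over a real closed field $K^{MW}_n(k)$ becomes $2$-divisible for $n\ll 0$, so a sufficiently high power of $\eta$ dies after $2$-completion; consequently $\SS_k/(2,\eta)=\SS_k/2$ as a Bousfield-localizing object and the two completions coincide.

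The only genuine obstacle lies in the unspecialized HKO theorem itself --- controlling the Adams tower in the presence of negative-weight elements like $\eta$, and ruling out $\lim^1$ obstructions on the $E_\infty$-page --- but once that theorem is invoked, the specialization claimed here is immediate from the two verifications above.
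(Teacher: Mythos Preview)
Your overall strategy---invoke \cite[Theorem 1]{HKO} and check its hypotheses---matches the paper's, and the convergence claim for $Y=\SS_k$ goes through. Two points deserve comment.

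For $Y=\spec(L)_+$, the paper does not argue cellularity over $k$. Instead it observes (via the $(p_\#,p^*)$ adjunction for $p:\spec(L)\to\spec(k)$) that $[\spec(L)_+\wedge S^t,-]_k\cong [S^t,p^*(-)]_L$, so the spectral sequence in question is literally the weight-zero motivic Adams spectral sequence for $\SS_L$ over the algebraically closed field $L$, where HKO applies directly. Your route instead asserts that $\spec(L)_+$ is an extension of $\SS_k$ by ``a motivic sphere,'' but this is not quite right: the cofiber of $\spec(L)_+\to S^0$ is the object $S^L$, which the paper explicitly remarks is \emph{not} equivalent to any $S^{a+b\alpha}$. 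So strict cellularity of $\spec(L)_+$ is not obvious from your argument, and the paper's base-change maneuver sidesteps the issue.

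The genuine gap is in your $p=2$ argument. You assert that over a real closed field $K^{MW}_n(k)$ becomes $2$-divisible for $n\ll 0$, and hence that some power of $\eta$ dies after $2$-completion. This is false: for $n<0$ one has $K^{MW}_n(k)\cong W(k)\cong\ZZ$, and multiplication by $\eta$ is an \emph{isomorphism} on these groups. Thus $\eta$ is not nilpotent on $\SS_k$, nor on $(\SS_k)^\wedge_2$, and your deduction that $\SS_k/(2,\eta)$ and $\SS_k/2$ agree as localizing objects does not follow. The paper instead appeals to the second clause of \cite[Theorem 1]{HKO}: if $cd_2(k[i])<\infty$ then $(\SS_k)^\wedge_2\to(\SS_k)^\wedge_{2,\eta}$ is an equivalence. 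Since $k$ real closed forces $k[i]$ algebraically closed, one has $cd_2(k[i])=0$ and the hypothesis holds.
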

\begin{proof}
This is the weight zero portion of the $p$-primary motivic Adams spectral sequence
constructed in \cite{HKO} over $k$ (when $Y=\SS_k$) or over $L$ (when $Y=\spec(L)_{+}$).  The form of the $E_1$-page is immediate
from the totalization construction.  Convergence follows from
\cite[Theorem 1]{HKO}, which states that over a field $k$ of characteristic $0$, the Adams spectral sequence for a finite cell spectrum at $p$ converges to $(p,\eta)$-completions. Moreover, by loc.~cit.~ if $cd_2(k[i])<\infty$ then $(\SS_k)^{\wedge}_{2}\to (\SS_k)^\wedge_{2,\eta}$ induces an isomorphism on motivic homotopy groups.  Real closed fields satisfy $cd_2(k[i])<\infty$ and so we can indeed invoke \cite[Theorem 1]{HKO}.
\end{proof} 
 
We now turn to the $C_2$-equivariant Adams spectral sequence. This spectral sequence has
has been studied for $p=2$ by P. Hu and I. Kriz \cite{HK}, where it is shown that it converges to the $2$-completion. For odd $p$, the situation is a little different. The target of this spectral sequence is the $\HH\ul{\Z/p}$-nilpotent completion, which can be different than $p$-completion. We briefly recall its definition and construction and then show that it agrees with $(p,\eta)$-completion in general.

Bousfield's construction and discussion of the nilpotent completion and its relation to the Adams spectral sequence in \cite{Bousfield} applies as well to the equivariant setting. For a  concise recollection, 
see \cite[Section 6.7]{DI:MASS}  (the discussion of loc.~cit.~is tailored to the motivic setting but applies to the equivariant setting with evident modification). 
Let $E$ be a $C_2$-equivariant ring spectrum and define $\overline{E}$ to be the fiber of the unit map $\SS_{C_2}\to E$. For a spectrum $X$ we 
define 
$$
X_{s} = \overline{E}^{\wedge s}\wedge X\;\; \text{and}\;\; 
C_{s}=\cofiber(X_{s+1}\to X).
$$ 
There are maps $X_{s+1}\to X_{s}$, and hence maps $C_{s}\to C_{s-1}$ 
induced by $\overline{E}\to \SS_{C_2}$. 
The \textit{$E$-nilpotent completion} of $X$ is defined to be
$$
X^{\wedge}_{E}=\holim(C_{s}).
$$
Note that there are cofiber sequences 
$X_{\infty} \to X \to X^{\wedge}_{E}$,
where $X_{\infty} := \holim X_{s}$. 
The tower $\{C_{s}\}$ forms an $E$-nilpotent resolution of $X$, in the sense of \cite[Definition 5.6]{Bousfield}. The Tot-tower associated to the cosimplicial spectrum $E^{\wedge \bullet}\wedge X$ also forms an $E$-nilpotent resolution of $X$. 
The arguments of 
\cite[Proposition 5.8]{Bousfield} thus show 
that the homotopy limit of this Tot-tower is homotopic to $X^{\wedge}_{E}$.

Set $W_{s}:= E\wedge X_{s} = E\wedge \overline{E}^{\wedge s}\wedge X$, then $W_{s} = \cofiber(X_{s+1} \to X_{s})$. Note that we also have that
$\Sigma W_{s} =\cofiber(C_{s}\to C_{s-1})$.
By induction, each $C_s$ is $E$-local and hence so is  $X^{\wedge}_{E}$.  Therefore the map $\alpha:X\to X^{\wedge}_{E}$ factors through the Bousfield localization, $X\to L_{E}X \to X^{\wedge}_{E}$. 

\begin{lemma}[\cite{Bousfield}]
 The map $\beta:L_{E}X\to X^{\wedge}_{E}$ is an equivariant weak equivalence if and only if  $\alpha^{\wedge}_{E}:X^{\wedge}_{E}\to(X^{\wedge}_{E})^{\wedge}_{E}$ is an equivariant weak equivalence.
\end{lemma}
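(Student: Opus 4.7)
The strategy is to reformulate both conditions in terms of the fiber $X_\infty$ of $\alpha$ and to show that the two reformulations coincide. Both $L_E X$ and $X^\wedge_E$ are $E$-local, and the factorization $X \to L_E X \xrightarrow{\beta} X^\wedge_E$ of $\alpha$ arises from the universal property of $E$-localization.

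First I would identify $\beta$ concretely by applying $L_E$ to the cofiber sequence $X_\infty \to X \to X^\wedge_E$ from the preceding discussion. Since $X^\wedge_E$ is $E$-local, $L_E(X^\wedge_E) \simeq X^\wedge_E$, and by idempotence of $L_E$ the induced map $L_E X \to X^\wedge_E$ coincides with $\beta$. The resulting cofiber sequence $L_E X_\infty \to L_E X \xrightarrow{\beta} X^\wedge_E$ shows that $\beta$ is a weak equivalence if and only if $L_E X_\infty \simeq *$, equivalently $X_\infty$ is $E$-acyclic (i.e.\ $E \wedge X_\infty \simeq *$). Analogously, applying the cofiber sequence to $X^\wedge_E$ in place of $X$ shows that $\alpha^\wedge_E$ is a weak equivalence if and only if $(X^\wedge_E)_\infty := \holim_s \overline{E}^{\wedge s} \wedge X^\wedge_E \simeq *$.

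The lemma thus reduces to the assertion that $E \wedge X_\infty \simeq *$ if and only if $(X^\wedge_E)_\infty \simeq *$. I would establish this by smashing the cofiber sequence $X_\infty \to X \to X^\wedge_E$ with $\overline{E}^{\wedge s}$, taking $\holim_s$, and analyzing the resulting diagram together with the tower $\{C_t\}$ defining $X^\wedge_E$. This is a direct adaptation of Bousfield's argument in \cite[\S 6]{Bousfield} to the equivariant setting. The principal technical obstacle is the careful interchange of smash products and homotopy limits; no essentially new difficulty arises in passing from the classical to the $C_2$-equivariant case, since $\SH_{C_2}$ is a tensor-triangulated category satisfying all the formal properties invoked by Bousfield.
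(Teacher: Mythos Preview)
Your reformulations are correct: $\beta$ is an equivalence iff $X_\infty$ is $E$-acyclic, and $\alpha^\wedge_E$ is an equivalence iff $(X^\wedge_E)_\infty \simeq *$. But the equivalence of these two conditions is the entire content of the lemma, and you do not prove it. Smashing your cofiber sequence with $\overline{E}^{\wedge s}$ and taking $\holim_s$ (which is exact, so no interchange problem arises at this stage) gives a cofiber sequence $(X_\infty)_\infty \to X_\infty \to (X^\wedge_E)_\infty$, reducing the question to whether $E \wedge X_\infty \simeq *$ is equivalent to $(X_\infty)^\wedge_E \simeq *$. The forward implication is immediate (if $E\wedge Z\simeq *$ then every $C_s(Z)\simeq *$), but the reverse needs the key observation you omit: the natural retraction $E\wedge Z \to E\wedge Z^\wedge_E \to E\wedge Z$, built from the projection $Z^\wedge_E \to C_0 = E\wedge Z$ and the multiplication $E\wedge E\to E$, which exhibits $E\wedge Z$ as a retract of $E\wedge Z^\wedge_E$. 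The ``interchange of smash products and homotopy limits'' you flag is not the actual obstacle.

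The paper's proof uses this retraction directly and avoids the detour through $X_\infty$. From the retraction (together with the fact that the $C_s$ are built from $E$-modules) one deduces that a map $f$ is an $E$-equivalence if and only if $f^\wedge_E$ is a weak equivalence. Since $\beta$ is a map between $E$-local objects, $\beta$ is an equivalence iff it is an $E$-equivalence iff $\beta^\wedge_E$ is an equivalence; and since $X\to L_EX$ is an $E$-equivalence, $X^\wedge_E\to (L_EX)^\wedge_E$ is an equivalence and $\alpha^\wedge_E$ is identified with $\beta^\wedge_E$ up to this equivalence.
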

\begin{proof}
This is similar to  \cite[p. 273]{Bousfield}. 
  We have a retraction $E\wedge X\to E\wedge X^{\wedge}_{E}\to E\wedge X$ obtained from  $X^{\wedge}_{E}\to C_{0}=E\wedge X$ together with $E\wedge E\to E$. One finds that $X\to Y$ is an $E$-equivalence if and only if $X^{\wedge}_{E}\to Y^{\wedge}_{E}$ is an equivariant equivalence. 
  In particular, $X^{\wedge}_{E}\to (L_{E}X)^{\wedge}_{E}$
  is an equivariant equivalence. 
  The map $\beta$ is an equivalence if and only if it is an $E$-equivalence and so $\beta$ is an equivalence if and only if $(L_{E}X)^{\wedge}_{E}\wkeq X^{\wedge}_{E}$. This happens if and only if $\alpha^{\wedge}_{E}$ is an equivalence. 
\end{proof}

\begin{proposition}\label{prop:idem}
Let $R$ be a subring of $\mathbb{Q}$. Suppose that $E$ satisfies the condition that the geometric fixed points spectrum $\Phi^{K}(E)$, $K=\{e\}, C_2$ are $N_{K}$-connective for some $N_K$, and $H_{r}(\Phi^{K}(E))$ is a finitely generated $R$-module for all $r$. Let $X$ be an $C_2$-spectrum such that each $\Phi^{K}(X)$ is $M_K$-connective for some $M_K$.
Then $\alpha^{\wedge}_{E}:X^{\wedge}_{E}\xrightarrow{\wkeq} (X^{\wedge}_{E})^{\wedge}_{E}$ is an equivariant equivalence.
\end{proposition}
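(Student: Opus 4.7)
The plan is to reduce the question to the nonequivariant case via geometric fixed points, where an analogous idempotency result is due to Bousfield. Recall that a map $f$ in $\SH_{C_2}$ is an equivariant equivalence if and only if $\Phi^K(f)$ is a nonequivariant equivalence for each $K\in \{\{e\}, C_2\}$. So it suffices to show that $\Phi^K(\alpha^\wedge_E)$ is an equivalence for each such $K$.

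First, I would establish a compatibility of the form
\[
  \Phi^K(X^\wedge_E)\simeq (\Phi^K X)^\wedge_{\Phi^K E}.
\]
The functor $\Phi^K$ is strong symmetric monoidal, sends $\SS_{C_2}$ to $\SS$, and preserves cofiber sequences, so $\Phi^K(\overline E)\simeq \overline{\Phi^K E}$ and
$\Phi^K(X_s)\simeq \overline{\Phi^K E}^{\wedge s}\wedge \Phi^K X$, compatibly with $\Phi^K(C_s)$. The only nontrivial point is that $\Phi^K$ commutes with the sequential homotopy limit $\holim_s C_s$ defining the $E$-nilpotent completion. This follows from the Milnor sequence once we know the tower $\Phi^K(C_s)$ satisfies a connectivity estimate: using the cofiber sequences $\Sigma W_s\to C_s\to C_{s-1}$, the hypothesis that $\Phi^K E$ is $N_K$-connective and $\Phi^K X$ is $M_K$-connective implies that $\Phi^K(\Sigma W_s)$ and hence the homotopy fibers in the tower become arbitrarily connective as $s\to\infty$, making the $\lim^1$ term vanish.

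Second, I would invoke the nonequivariant result: for a ring spectrum $F$ with $F$ bounded below and $H_*(F)$ finitely generated over a subring $R\subseteq \Q$ in each degree, and for any bounded below spectrum $Y$, the natural map $Y^\wedge_F\to (Y^\wedge_F)^\wedge_F$ is a weak equivalence. This is the classical Bousfield idempotency theorem (see \cite{Bousfield}, Prop. 6.6 and surrounding discussion), and the hypotheses match exactly the conditions imposed on $\Phi^K E$ and $\Phi^K X$. Applying it with $F=\Phi^K E$ and $Y=\Phi^K X$ gives that $\Phi^K(\alpha^\wedge_E)$ is a weak equivalence.

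Third, combining the two steps via the geometric fixed point detection property yields that $\alpha^\wedge_E$ is an equivariant equivalence. The main obstacle is the first step: ensuring that the geometric fixed point functors genuinely commute with the $\holim$ defining nilpotent completion. This is where the finitely generated homology and connectivity hypotheses do the real work, controlling the connectivity of the tower $\{\Phi^K(C_s)\}$ sufficiently to kill the $\lim^1$ obstruction and to match the standard Bousfield hypotheses after passing to fixed points.
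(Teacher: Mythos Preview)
Your strategy—reduce to the nonequivariant statement via geometric fixed points and then invoke Bousfield—is reasonable and genuinely different from the paper's approach, but Step~1 has a real gap.

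You assert that the fibers $\Phi^K(\Sigma W_s)$ of the tower $\{\Phi^K(C_s)\}$ become arbitrarily connective as $s\to\infty$. This is not correct under the stated hypotheses. Since $W_s = E\wedge \overline{E}^{\wedge s}\wedge X$ and $\Phi^K$ is monoidal, the connectivity of $\Phi^K(W_s)$ grows with $s$ only if $\Phi^K(\overline{E})$ is at least $1$-connective. But the hypotheses only say $\Phi^K(E)$ is $N_K$-connective for \emph{some} $N_K$; from the fiber sequence $\overline{E}\to\SS_{C_2}\to E$ one gets no better than $(\min(0,N_K)-1)$-connectivity for $\Phi^K(\overline{E})$, and even in the intended application $E=\HH\ul{\Z/p}$ the spectrum $\overline{E}$ is only $0$-connective (the unit is not injective on $\pi_0$), so the connectivity of the smash powers does not increase. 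Separately, even if the connectivity estimate held, a Milnor $\lim^1$ argument computes $\pi_*$ of a homotopy limit but does not by itself compare $\Phi^{C_2}(\holim_s C_s)$ with $\holim_s\Phi^{C_2}(C_s)$; the obstruction to commuting $\Phi^{C_2}$ with $\holim$ is that $\Phi^{C_2}$ involves smashing with the filtered colimit $\widetilde{E}C_2\simeq\colim_k S^{k\sigma}$, and you have not explained why that colimit passes through the inverse limit.

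The paper argues differently and more directly. Using the cofiber sequence $X_\infty\to X\to X^\wedge_E$, it suffices to show $(X_\infty)_\infty\simeq X_\infty$, and for this the paper shows each $(X_\infty)_s\to X_\infty$ is an equivalence. Rewriting the $\holim$ as a fiber of a self-map of a product, this reduces to showing that
\[
\overline{E}^{\wedge s}\wedge \prod_n(\overline{E}^{\wedge n}\wedge X)\longrightarrow \prod_n \overline{E}^{\wedge s+n}\wedge X
\]
is an equivariant equivalence. On underlying spectra this is Adams's theorem (a bounded-below spectrum with degreewise finitely generated $R$-module homology smashes through arbitrary products), which is exactly where the finite-generation hypothesis enters. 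On geometric fixed points the paper proves a small lemma (\aref{lem:gfpprod}) that $\Phi^{C_2}$ commutes with countable products of $C_2$-spectra whose underlying spectra are uniformly bounded below; this, combined with monoidality of $\Phi^{C_2}$ and Adams's theorem again, finishes the argument. If you want to make your Step~1 work, you will need essentially these same ingredients—the real content is commuting $\Phi^{C_2}$ with the relevant product, not a tower-connectivity estimate.
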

\begin{proof}
 There are functorial cofiber sequences $X_{\infty} \to X \to X^{\wedge}_{E}$. The result follows by showing  that  $(X_{\infty})_{\infty}\wkeq X_{\infty}$. 
 We claim that the map 
 $$
 (X_{\infty})_{s} = \overline{E}^{\wedge s}\wedge  \holim_{n}(\overline{E}^{\wedge n}\wedge X) \to \holim_{n}\overline{E}^{\wedge s+n}\wedge X 
 $$ 
 is an equivariant weak equivalence. There is a cofiber sequence $\holim_i Y_i\to \prod_iY_i\to \prod_iY_i$, and so it suffices to see that 
 $\overline{E}^{\wedge s}\wedge  \prod_n(\overline{E}^{\wedge n}\wedge X) \to \prod_{n}\overline{E}^{\wedge s+n}\wedge X$ is an equivariant weak equivalence.
 It follows from \cite[Thereom III.15.2]{Adams:blue} and \aref{lem:gfpprod} that this map is a weak equivalence on geometric fixed points as well as on the underlying spectrum and so it is an equivariant weak equivalence.
  The map $(X_{\infty})_{s}\to X_{\infty}$ is thus an equivariant weak equivalence and so taking homotopy limits we have that  $(X_{\infty})_{\infty}\wkeq X_{\infty}$ as desired. 
\end{proof}

For a $C_2$-spectrum $E$, we write $\pi_n^{C_2}(E) = [S^n, E]_{C_2}$ for the $n$th stable equivariant homotopy group.
\begin{lemma}\label{lem:gfpprod}
 Let $Y_i$, $i\in \mathbb{N}$, be $C_2$-spectra. Suppose that there is an integer $N$ so that the underlying spectrum of $Y_i$ is $N$-connective. Then
 $\Phi^{C_2}(\prod_i Y_i) \wkeq \prod_i \Phi^{C_2}(Y_i)$. 
\end{lemma}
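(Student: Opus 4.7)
The plan is to run the comparison through the standard filtered model for geometric fixed points and then swap a sequential homotopy colimit past the infinite product using uniform connectivity of the bonding maps --- this is the equivariant analog of the application of \cite[III.15.2]{Adams:blue} cited in the preceding proposition.

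First, I would write $\Phi^{C_2}(Y) = (\widetilde{E}C_2 \wedge Y)^{C_2}$, where $\widetilde{E}C_2$ is the cofiber of $EC_{2+}\to S^0$, modeled as $\widetilde{E}C_2 \simeq \hocolim_n S^{n\sigma}$ along the standard inclusions $S^{n\sigma}\hookrightarrow S^{(n+1)\sigma}$. For each fixed $n$, the representation sphere $S^{n\sigma}$ is a finite (hence dualizable) $C_2$-spectrum, so $S^{n\sigma}\wedge(-)$ commutes with arbitrary products. Since $(-)^{C_2}$ is a right adjoint it also commutes with products, and since it preserves sequential homotopy colimits along cofibrations of genuine $C_2$-spectra, these observations assemble to give
\[
\Phi^{C_2}\bigl(\textstyle\prod_i Y_i\bigr) \simeq \hocolim_n \prod_i (S^{n\sigma} \wedge Y_i)^{C_2}
\qquad\text{and}\qquad
\prod_i \Phi^{C_2}(Y_i) \simeq \prod_i \hocolim_n (S^{n\sigma} \wedge Y_i)^{C_2}.
\]
The lemma is thereby reduced to showing that the canonical map $\hocolim_n \prod_i \to \prod_i \hocolim_n$, applied to the spectra $(S^{n\sigma}\wedge Y_i)^{C_2}$, is an equivalence.

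For this swap, I would analyze the cofiber of the bonding map $S^{n\sigma}\hookrightarrow S^{(n+1)\sigma}$. Smashing the basic cofiber sequence $S^0\to S^\sigma\to C_{2+}\wedge S^1$ with $S^{n\sigma}\wedge Y_i$ and applying $(-)^{C_2}$ identifies the cofiber of $(S^{n\sigma}\wedge Y_i)^{C_2}\to (S^{(n+1)\sigma}\wedge Y_i)^{C_2}$ with $(C_{2+}\wedge S^{n+1}\wedge Y_i)^{C_2} \simeq S^{n+1}\wedge (Y_i)_u$, which depends only on the underlying spectrum of $Y_i$. Under the uniform $N$-connectivity hypothesis on $(Y_i)_u$, this cofiber is $(N+n+1)$-connective, so the bonding maps become $(N+n)$-connected uniformly in $i$. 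Consequently, at any fixed degree $k$, the colimit $\colim_n\pi_k((S^{n\sigma}\wedge Y_i)^{C_2})$ stabilizes at an index $n_0(k,N)$ independent of $i$, and the $\hocolim_n$--$\prod_i$ swap holds on homotopy groups and hence on the level of spectra.

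The hard step will be the identification of the cofiber of $S^0\to S^\sigma$ as the free $C_2$-spectrum $C_{2+}\wedge S^1$, together with the computation $(C_{2+}\wedge Z)^{C_2} \simeq Z$, so that after $(-)^{C_2}$ only the underlying spectrum of $Y_i$ contributes to the connectivity count. Once this Adams-isomorphism type identification is in hand, the hypothesis on $(Y_i)_u$ alone suffices to close the argument.
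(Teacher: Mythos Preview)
Your proposal is correct and follows essentially the same route as the paper: model $\widetilde{E}C_2$ as $\hocolim_n S^{n\sigma}$, use dualizability of $S^{n\sigma}$ to commute the smash past the product, and then exchange the sequential colimit with the product by analyzing the cofiber sequence $C_{2+}\to S^0\to S^\sigma$ smashed with $S^{n\sigma}\wedge Y_i$, which shows the bonding maps become uniformly highly connected because only the underlying spectrum of $Y_i$ contributes. The only cosmetic difference is that the paper works directly with $\pi_n^{C_2}$ of the equivariant spectra (using the fiber $C_{2+}\wedge S^{k\sigma}\wedge Y_i$ of the bonding map) rather than first passing through $(-)^{C_2}$ and looking at the cofiber, but these are the same computation.
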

\begin{proof}

The geometric fixed points of $X$ are equal to $(\widetilde{E}C_2\wedge X)^{C_2}$. We have an equivariant equivalence $\widetilde{E}C_2 \wkeq \colim_{k}S^{k\sigma}$.
Note that
$S^{k\sigma}\wedge (\prod Y_i) \wkeq \prod (S^{k\sigma} \wedge Y_i)$, since $S^{k\sigma}$ is dualizable. We thus need to see that the map
\begin{equation}\label{eqn:gfpcomm}
 \colim_{k} \prod (S^{k\sigma} \wedge Y_i) \to\prod \colim_k (S^{k\sigma} \wedge Y_i) 
 \end{equation}
 induces an isomorphism on  $\pi^{C_2}_n$ for all $n$.
Consider the cofiber sequence 
$$
C_{2+}\wedge S^{k\sigma} \wedge Y_i \to S^{k\sigma}\wedge Y_i \to S^{(k+1)\sigma} \wedge Y_{i}.
$$ 
Since each $Y_i$ is $N$-connective, for a fixed $n$, there is an integer $s$ such that 
$\pi_{n}^{C_2}(C_{2+}\wedge S^{k\sigma} \wedge Y_i)$  vanishes for all $k>s$. This implies that 
$\pi_{n}^{C_2}(S^{k\sigma}\wedge Y_i ) = \pi_{n}^{C_2}( S^{(k+1)\sigma} \wedge Y_{i})$. 
We thus have that  the (\ref{eqn:gfpcomm}) induces an isomorphism on $\pi_n^{C_2}$, as desired.
\end{proof}

Recall that $\ul{\pi}_n(X)$ denotes the Mackey functor homtopy groups. Say that a $C_2$-spectrum $X$ is \emph{$n$-connective} $\ul{\pi}_k(X) = 0$ for $k<n$. Say that a map $f:X\to Y$ is an $n$-equivalence if its cofiber is $n+1$-connective. For the following lemma, note that since $\eta$ is zero on $\HH\ul{\Z}$, the map $\HH\ul{\Z}\to \HH\ul{\Z}/\eta$ is split.

\begin{lemma}\label{lem:oneeq}
 Let $X$ be a connective $C_2$-spectrum and $p$ an odd prime. Then the unit map
 $X/(p^{s},\eta^{t})\to \HH\ul{\Z}\wedge X/(p^{s},\eta^{t})$
 and the map $X/(2^s,\eta^t)\to \HH\ul{\Z}\wedge X/2^s$ are $1$-equivalences for any integer $s,t\geq 1$. 
 \end{lemma}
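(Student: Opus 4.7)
The plan is to compute the cofiber of the relevant map and verify it is $2$-connective.  Writing the cofiber of the first map as $\Sigma(\overline{\HH\ul{\Z}}\wedge X/(p^{s},\eta^{t}))$, where $\overline{\HH\ul{\Z}}:=\hofib(\SS_{C_2}\to \HH\ul{\Z})$, the task becomes to show $\overline{\HH\ul{\Z}}\wedge X/(p^{s},\eta^{t})$ is $1$-connective.  Since the smash product of an $a$-connective with a $b$-connective $C_2$-spectrum is $(a+b)$-connective (by equivariant skeletal arguments) and $X$ is connective by hypothesis, this reduces immediately to the case $X=\SS_{C_2}$: I need $\overline{\HH\ul{\Z}}/(p^{s},\eta^{t})$ to be $1$-connective.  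For the $p=2$ claim, the splitting $\HH\ul{\Z}\wedge \SS/(2^{s},\eta^{t}) \wkeq \HH\ul{\Z}/2^{s}\vee \Sigma^{1+t\sigma}\HH\ul{\Z}/2^{s}$ induced by $\eta\cdot \id_{\HH\ul{\Z}}=0$ (as noted just before the lemma) realizes the map $X/(2^{s},\eta^{t}) \to \HH\ul{\Z}\wedge X/2^{s}$ as the unit postcomposed with projection onto the first summand; an octahedral analysis reduces this case to the same statement.

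I would then analyze $\overline{\HH\ul{\Z}}$ via the factorization $\SS_{C_2}\to \HH\ul{A}\to \HH\ul{\Z}$ of the unit through the Burnside Mackey functor Eilenberg--MacLane spectrum.  This produces a fiber sequence
\[
\SS_{>0} \to \overline{\HH\ul{\Z}} \to \HH\ul{I},
\]
where $\SS_{>0}:=\hofib(\SS_{C_2}\to \HH\ul{A})$ is $1$-connective (the Postnikov fiber above the bottom) and $\ul{I}:=\ker(\ul{A}\to \ul{\Z})$ is the augmentation Mackey functor.  Since $\SS_{>0}\wedge \SS/(p^{s},\eta^{t})$ is automatically $1$-connective, it suffices to show $\HH\ul{I}/(p^{s},\eta^{t})$ is $1$-connective.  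The crucial simplification is that $\ul{I}(C_2/e)=0$, so the underlying spectrum of $\HH\ul{I}$ vanishes and only the $C_2$-fixed-point calculation remains: I need $\ul{\pi}_0(\HH\ul{I}/(p^{s},\eta^{t}))(C_2/C_2)=0$.  The long exact sequence of the $\eta^{t}$-cofiber reduces this to verifying surjectivity of the $\eta^{t}$-multiplication
\[
H^{t\sigma}(pt;\ul{I}/p^{s}) \longrightarrow H^{0}(pt;\ul{I}/p^{s}) = \Z/p^{s}.
\]

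The main obstacle is this final surjectivity, which requires explicit $RO(C_2)$-graded cohomology of a point.  The necessary input comes from the long exact sequence associated to $\ul{I}\to\ul{A}\to\ul{\Z}$, combined with the well-documented $RO(C_2)$-graded cohomology groups $H^{\star\sigma}(pt;\ul{A})$ and $H^{\star\sigma}(pt;\ul{\Z})$ and the known $\eta$-action on them.  For odd $p$, the $p$-local splitting of the $C_2$-equivariant stable homotopy category (available since $|C_2|=2$ is a unit mod $p$) renders the computation essentially nonequivariant and the surjectivity is straightforward.  For $p=2$, one uses the explicit identification of $\eta$ with (the reduction of) $t-2\in A(C_2)$ together with compatibility with the augmentation short exact sequence.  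The connectivity and Postnikov reductions above confine this last computation to a narrow, tractable range of bidegrees.
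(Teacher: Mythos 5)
Your reduction to $X=\SS_{C_2}$, your factorization of $\overline{\HH\ul{\Z}}:=\hofib(\SS_{C_2}\to\HH\ul{\Z})$ through the Postnikov truncation $\HH\ul{A}$ to get the fiber sequence $\SS_{>0}\to\overline{\HH\ul{\Z}}\to\HH\ul{I}$, and your observation that $\ul{I}(C_2/e)=0$ forces the underlying spectrum of $\HH\ul{I}$ to vanish are all correct and efficient, and the reduction of the odd-$p$ case to surjectivity of $\eta^t$ on $H^{\star\sigma}(\mathrm{pt};\ul{I}/p^s)$ is valid (the apparent second obstruction from $\pi_{-1}^{C_2}(\Sigma^{t\sigma}\HH\ul{I}/p^s)$ vanishes for cell-dimension reasons, so you were right to ignore it).

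However, there is a genuine gap in your treatment of $p=2$, and the ``straightforward'' surjectivity you defer is exactly where the two cases diverge. The key algebraic input is the identity $\eta\,a_\sigma = 2-[C_2/e]$ in $A(C_2)=\pi_0^{C_2}(\SS_{C_2})$ (check: $\Phi^{C_2}(\eta)=2$, $\Phi^{C_2}(a_\sigma)=1$, $\res^{C_2}_e(a_\sigma)=0$, and the mark homomorphism $(\res,\Phi^{C_2})$ is injective). Since $\res_{\ul I}=0$, the element $[C_2/e]=\mathrm{tr}\circ\res$ acts by zero on $\pi_0^{C_2}(\HH\ul{I}/p^s)\cong\Z/p^s$, so $\eta a_\sigma$ acts as multiplication by $2$. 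Because $a_\sigma$ is an isomorphism $\pi_0^{C_2}(\HH\ul{I}/p^s)\to\pi_{-\sigma}^{C_2}(\HH\ul{I}/p^s)$, the map $\eta:\pi_{-\sigma}^{C_2}\to\pi_0^{C_2}$ is multiplication by $2$. For odd $p$ this is a unit and your argument closes; for $p=2$ it is \emph{not} surjective, and indeed $\pi_0^{C_2}(\HH\ul{I}/(2^s,\eta^t))\cong\Z/2\neq 0$, so $\overline{\HH\ul{\Z}}/(2^s,\eta^t)$ is only $0$-connective. Consequently your claim that the octahedral analysis ``reduces this case to the same statement'' is false: the cofiber of $X/(2^s,\eta^t)\to\HH\ul{\Z}\wedge X/2^s$ sits in a cofiber sequence with pieces $\Sigma\overline{\HH\ul{\Z}}/(2^s,\eta^t)$ (which is merely $1$-connective) and $\Sigma^{2+t\sigma}\HH\ul{\Z}/2^s$, and one must show that the boundary map out of the second piece surjects onto the surviving $\ul\pi_1\cong\Z/2$ of the first. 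That is an additional computation, not subsumed by the odd-$p$ argument, and is precisely why the lemma's $p=2$ statement has the different target $\HH\ul{\Z}\wedge X/2^s$ in the first place.
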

\begin{proof}
There are cofiber sequences $X/(p,\eta^t)\to X/(p^s,\eta^t) \to X/(p^{s-1},\eta^{t})$ and 
a similar one for quotients by powers of $\eta$. An inductive argument shows that it suffices to consider the case $s=1$, $t=1$.
It suffices to consider $X=\SS_{\C_2}$, in which case a straightforward calculation shows that 
$\SS_{\C_2}/(p,\eta)\to \HH{\ul{\Z}}\wedge \SS_{\C_2}/(p,\eta)$ and $\SS_{\C_2}/(2,\eta)\to \HH{\ul{\Z}}\wedge \SS_{\C_2}/2$ are $1$-equivalences. 
\end{proof}

\begin{proposition}\label{lem:otoh}
 Let $X$ be a connective, $C_2$-spectrum and $p$ a prime. Suppose that $X$ satisfies the condition  that both multiplication by $p^{s}$ and by $\eta^{t}$ are equal to zero on $X$, for some integers $s,t\geq 1$. 
 Then $X\to X^{\wedge}_{\HH{\ul{\Z/p}}}$ is an equivariant equivalence.
\end{proposition}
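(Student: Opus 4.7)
The plan is to exhibit $X$ as a retract of $X' := X \wedge \SS_{C_2}/(p^s,\eta^t)$, prove the assertion for $X'$, and then conclude by functoriality of $\HH\ul{\Z/p}$-nilpotent completion. Since $p^s$ and $\eta^t$ act as zero on $X$, the natural maps $X\to X/p^s$ and $X\to X/\eta^t$ admit sections in $\SH_{C_2}$, so $X$ is a retract of $X'$. As $(-)^\wedge_{\HH\ul{\Z/p}}$ is functorial and therefore preserves retracts, it suffices to show that $X'\to (X')^\wedge_{\HH\ul{\Z/p}}$ is an equivalence. Observe that $X'$ is connective and still has $p^s$ and $\eta^t$ acting as zero.

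First I would show $X'\simeq (X')^\wedge_{\HH\ul{\Z}}$. By \aref{lem:oneeq} applied to $\SS_{C_2}$, the cofiber of $\SS_{C_2}/(p^s,\eta^t)\to \HH\ul{\Z}\wedge\SS_{C_2}/(p^s,\eta^t)$ is $2$-connective, and smashing with an $n$-connective $Z$ yields that the cofiber of $Z/(p^s,\eta^t)\to \HH\ul{\Z}\wedge Z/(p^s,\eta^t)$ is $(n+2)$-connective. When in addition $p^s$ and $\eta^t$ act as zero on $Z$, then $Z$ is a retract of $Z/(p^s,\eta^t)$, so $\overline{\HH\ul{\Z}}\wedge Z$ is $(n+1)$-connective. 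Applying this iteratively with $Z=\overline{\HH\ul{\Z}}^k\wedge X'$ (which inherits the $(p^s,\eta^t)$-torsion and is $k$-connective by induction), one concludes that $\overline{\HH\ul{\Z}}^n\wedge X'$ is $n$-connective for every $n$. A Milnor sequence argument then gives $\holim_n\overline{\HH\ul{\Z}}^n\wedge X'=0$, whence $X'\simeq (X')^\wedge_{\HH\ul{\Z}}$. The $p=2$ case is handled similarly, using that $\eta$ acts as zero on $\HH\ul{\Z}$.

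Next I bridge from $\HH\ul{\Z}$- to $\HH\ul{\Z/p}$-nilpotent completion. Let $\{C_s\}$ denote the $\HH\ul{\Z}$-Adams tower for $X'$, so $X'\simeq\holim_s C_s$. Each $C_s$ is built via finitely many cofiber sequences from spectra of the form $\HH\ul{\Z}\wedge\overline{\HH\ul{\Z}}^i\wedge X'$, which are $\HH\ul{\Z}$-modules on which $p^s$ acts as zero, and are therefore modules over $\HH\ul{\Z}/p^s$. The cofiber sequences $\HH\ul{\Z/p}\to \HH\ul{\Z}/p^{i+1}\to \HH\ul{\Z}/p^i$ express $\HH\ul{\Z}/p^s$ as an iterated extension of $\HH\ul{\Z/p}$'s, so any $\HH\ul{\Z}/p^s$-module is a retract of an $\HH\ul{\Z/p}$-nilpotent spectrum, hence itself $\HH\ul{\Z/p}$-nilpotent. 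Thus $\{C_s\}$ is an $\HH\ul{\Z/p}$-nilpotent resolution of $X'$, and by the equivariant analog of \cite[Proposition~5.8]{Bousfield} recalled in the preceding discussion, $(X')^\wedge_{\HH\ul{\Z/p}}\simeq\holim_s C_s\simeq X'$. The retract argument of the first paragraph then yields $X\simeq X^\wedge_{\HH\ul{\Z/p}}$.

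The main obstacle is the bridging argument in the last paragraph: one must verify carefully that the $\HH\ul{\Z}$-Adams stages of a $p^s$-torsion connective spectrum are automatically $\HH\ul{\Z/p}$-nilpotent. This rests on identifying $\HH\ul{\Z/p^s}$ as an iterated extension of $\HH\ul{\Z/p}$'s in the equivariant setting, together with the module-theoretic observation that $p^s$-torsion $\HH\ul{\Z}$-modules naturally inherit an $\HH\ul{\Z/p^s}$-module structure.
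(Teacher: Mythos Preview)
Your proposal is correct and follows essentially the same strategy as the paper: build the $\HH\ul{\Z}$-Adams tower for $X$, use \aref{lem:oneeq} together with the retract trick to show that $\overline{\HH\ul{\Z}}^{\wedge n}\wedge X$ has connectivity tending to infinity (so the tower converges to $X$), and then argue that the same tower is already an $\HH\ul{\Z/p}$-nilpotent resolution. The paper applies the retract argument directly to each stage $X_i$ rather than passing to $X'=X/(p^s,\eta^t)$ first; your front-loaded retract is in fact redundant, since you reinvoke the same trick at each $Z=\overline{\HH\ul{\Z}}^{\wedge k}\wedge X'$ anyway, and $X$ itself already satisfies the torsion hypotheses.

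The one point that genuinely needs care is exactly the one you flag. The assertion that a $p^s$-torsion $\HH\ul{\Z}$-module ``naturally inherits an $\HH\ul{\Z/p^s}$-module structure'' is not automatic in the homotopy category: one gets a factorization of the action map through $\HH\ul{\Z/p^s}\wedge M$, but its coherence is not free. What is true, and suffices, is that each $K_i=\HH\ul{\Z}\wedge Y_i$ (with $Y_i=\overline{\HH\ul{\Z}}^{\wedge i}\wedge X$) is a \emph{retract} of the free $\HH\ul{\Z/p^s}$-module $\HH\ul{\Z/p^s}\wedge Y_i$, simply because $Y_i$ is a retract of $Y_i/p^s$. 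Your iterated-extension argument then shows $\HH\ul{\Z/p^s}\wedge Y_i$ is $\HH\ul{\Z/p}$-nilpotent, and retracts of nilpotent spectra are nilpotent. The paper handles this same step by a slightly different maneuver, directly exhibiting a retraction of $K_i\to\HH\ul{\Z/p}\wedge K_i$ via the composite $\HH\ul{\Z/p}\wedge K_i\to\HH\ul{\Z/p^s}\wedge K_i\to K_i$.
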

\begin{proof}
We treat the case of an odd prime explicitly, $p=2$ is similar.
First note that if $p^{s}$ and $\eta^{t}$ both act by zero on a spectrum $Z$, then  $Z$ is a summand of $Z/(p^s,\eta^t)$. 
Note as well that the previous lemma implies that if $Y$ is $n$-connective, for $n\geq 0$, then $Y\to \HH\ul{\Z}\wedge Y$ is an $n+1$-equivalence.
We inductively define cofiber sequences $X_{i+1}\to X_{i}\to K_{i}$ 
by letting $X_{0}:=X$ and  $K_{i}:=\HH\ul{\Z}\wedge X_i$. We claim that $X_i$ is $i$-connective for all $i$. Indeed if $X_i$ is $i$-connective then $X_i/(p^s,\eta^t)\to K_i/(p^s,\eta^t)$ is an $i+1$-equivalence. But this map contains $X_i\to K_i$ as a summand and so it is an $i+1$-equivalence as well which implies that $X_{i+1}$ is $i+1$-connective.

Write $C_i=\cofiber(X_{i+1}\to X)$.
The tower $\{C_i\}$ is an $\HH\ul{\Z}$-nilpotent resolution of $X$ and we claim that it is in fact an $\HH\ul{\Z/p}$-nilpotent resolution.  
If $N$ is $\HH\ul{\Z/p}$-nilpotent, 
then $\colim_i[Y\wedge X_i, N]_{C_2}=0$, where $Y=S^{0}$ or $C_2$. 
It remains to see that 
the $C_i$ are $\HH{\Z/p}$-nilpotent.  
This is seen by induction by noting the $K_i$ are $\HH\ul{\Z/p}$-nilpotent since there is a splitting of $K_i\to \HH\ul{\Z/p}\wedge K_i$  as follows.
We have 
$\HH\ul{\Z/p^N}\wedge K_i = (\HH\ul{\Z}\wedge X_i)\vee (\Sigma\HH\ul{\Z}\wedge X_i)$ and so a splitting of $K_i\to \HH\ul{\Z/p}\wedge K_i$ is obtained via the composition 
$$
\HH\ul{\Z/p}\wedge K_i\to \HH\ul{\Z/p^N}\wedge K_i\to \HH\ul{\Z}\wedge  X_i=K_i.
$$
Since $X_i$ is $i$-connective, we have that $\holim_{i}X_{i}\wkeq \ast$ and therefore $X = X^{\wedge}_{\HH\ul{\Z/p}}$ as desired.
\end{proof}

\begin{proposition}\label{prop:nilnil}
 Let $X$ be a connective $C_2$-spectrum and $p$ a prime. Then there is a natural equivariant equivalence 
 $X^{\wedge}_{\HH{\ul{\Z/p}}} \wkeq X^{\wedge}_{p,\eta}$. 
\end{proposition}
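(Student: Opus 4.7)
The plan is to identify $X^{\wedge}_{p,\eta}$ with $X^{\wedge}_{\HH\ul{\Z/p}}$ by combining \aref{lem:otoh} with the description $X^{\wedge}_{p,\eta} \wkeq \holim_n X/(p^n,\eta^n)$ from the start of this section. The strategy is to verify that each stage of the $(p,\eta)$-tower satisfies the hypotheses of \aref{lem:otoh}, and then to propagate this through the homotopy limit to conclude that $X^{\wedge}_{p,\eta}$ already serves as the $\HH\ul{\Z/p}$-nilpotent completion.

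First I would verify the hypotheses of \aref{lem:otoh} for each quotient $X/(p^n,\eta^n)$. Since $X$ is connective and the cofiber construction defining the tower preserves connectivity, each stage is connective. Moreover, suitable powers of $p$ and of $\eta$ act nullhomotopically on $X/(p^n,\eta^n)$, by standard arguments about quotients by regular sequences. \aref{lem:otoh} therefore gives a natural equivalence $X/(p^n,\eta^n) \wkeq (X/(p^n,\eta^n))^{\wedge}_{\HH\ul{\Z/p}}$, so each stage is $\HH\ul{\Z/p}$-nilpotent complete and in particular $\HH\ul{\Z/p}$-local.

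Next I would show $X^{\wedge}_{p,\eta}$ is itself $\HH\ul{\Z/p}$-nilpotent complete. Since $\HH\ul{\Z/p}$-local objects are closed under homotopy limits, the equivalence $X^{\wedge}_{p,\eta} \wkeq \holim_n X/(p^n,\eta^n)$ exhibits $X^{\wedge}_{p,\eta}$ as $\HH\ul{\Z/p}$-local; the canonical map $X^{\wedge}_{p,\eta} \to (X^{\wedge}_{p,\eta})^{\wedge}_{\HH\ul{\Z/p}}$ is then an $\HH\ul{\Z/p}$-equivalence between $\HH\ul{\Z/p}$-local objects, hence an equivalence. Separately, the map $X \to X^{\wedge}_{p,\eta}$ is an $\HH\ul{\Z/p}$-equivalence: by definition it is an $\SS_{C_2}/(p,\eta)$-equivalence, and since $p$ and $\eta$ act as zero on $\HH\ul{\Z/p}$ (using that $\eta$ is zero on $\HH\ul{\Z}$, invoked in the run-up to \aref{lem:oneeq}), the spectrum $\HH\ul{\Z/p}$ is canonically an $\SS_{C_2}/(p,\eta)$-module, so every $\SS_{C_2}/(p,\eta)$-equivalence is automatically an $\HH\ul{\Z/p}$-equivalence. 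Applying $(-)^{\wedge}_{\HH\ul{\Z/p}}$ to the map $X \to X^{\wedge}_{p,\eta}$ then yields a chain of natural equivalences
\[
X^{\wedge}_{\HH\ul{\Z/p}} \wkeq (X^{\wedge}_{p,\eta})^{\wedge}_{\HH\ul{\Z/p}} \wkeq X^{\wedge}_{p,\eta}.
\]

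The main technical obstacle lies in the nullhomotopy claim used in the first step: multiplication by $p^n$ is not literally zero on the Moore spectrum $\SS_{C_2}/p^n$ at the prime $2$, so one must invoke that a possibly larger power is null, and keep careful track of the analogous phenomenon for $\eta$ in the equivariant setting. Once this is pinned down, the remaining ingredients---closure of $\HH\ul{\Z/p}$-locality under homotopy limits, uniqueness of maps between local objects, and the $\SS_{C_2}/(p,\eta)$-module structure on $\HH\ul{\Z/p}$---are all formal consequences of Bousfield's framework as set up in the preceding discussion.
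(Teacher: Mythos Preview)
Your overall strategy matches the paper's proof: both use \aref{lem:otoh} to show each $X/(p^n,\eta^n)$ is already $\HH\ul{\Z/p}$-nilpotent complete, observe that $X\to X^{\wedge}_{p,\eta}$ is an $\HH\ul{\Z/p}$-equivalence (since $\HH\ul{\Z/p}$ is an $\SS_{C_2}/(p,\eta)$-module), and then assemble the chain $X^{\wedge}_{\HH\ul{\Z/p}} \wkeq (X^{\wedge}_{p,\eta})^{\wedge}_{\HH\ul{\Z/p}} \wkeq X^{\wedge}_{p,\eta}$.

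There is, however, a genuine gap in your second paragraph. You assert that the canonical map $X^{\wedge}_{p,\eta} \to (X^{\wedge}_{p,\eta})^{\wedge}_{\HH\ul{\Z/p}}$ is an $\HH\ul{\Z/p}$-equivalence, apparently as a consequence of $X^{\wedge}_{p,\eta}$ being $\HH\ul{\Z/p}$-local. This conflates $E$-nilpotent completion with Bousfield $E$-localization: for $Y\to L_E Y$ the map is an $E$-equivalence by definition, but $Y\to Y^{\wedge}_E$ need not be. Indeed, by the lemma preceding \aref{prop:idem}, $Y\to Y^{\wedge}_E$ is an $E$-equivalence if and only if $L_E Y\to Y^{\wedge}_E$ is already an equivalence, which is exactly what you are trying to prove. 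So knowing that $X^{\wedge}_{p,\eta}$ is $\HH\ul{\Z/p}$-local is not enough; you need the stronger fact that it is $\HH\ul{\Z/p}$-\emph{nilpotent complete}.

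The paper closes this gap differently: rather than arguing via locality, it takes the homotopy limit of the equivalences $X/(p^n,\eta^n)\xrightarrow{\wkeq}(X/(p^n,\eta^n))^{\wedge}_{\HH\ul{\Z/p}}$ from \aref{lem:otoh} and identifies $\holim_n (X/(p^n,\eta^n))^{\wedge}_{\HH\ul{\Z/p}}$ with $(X^{\wedge}_{p,\eta})^{\wedge}_{\HH\ul{\Z/p}}$ directly. Your first step already supplies the left-hand equivalence, so the fix is to replace the unjustified ``$E$-equivalence between local objects'' sentence with this identification of homotopy limits. Your remark about powers of $p$ and $\eta$ not literally vanishing on the quotients is correct but harmless: \aref{lem:otoh} only requires \emph{some} power to act by zero, which holds here.
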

\begin{proof}
 The map $X\to X^{\wedge}_{p,\eta}$ is an $\SS_{C_2}/(p,\eta)$-equivalence, and therefore an $\HH\ul{\Z/p}$-equivalence. It follows that $X^{\wedge}_{\HH\ul{\Z/p}}\to (X^{\wedge}_{p,\eta})^{\wedge}_{\HH\ul{\Z/p}}$ is 
 an equivariant weak equivalence. 
 On the other hand, \aref{lem:otoh} implies that $X/(p^{n},\eta^{n})\to (X/(p^n,\eta^n))^{\wedge}_{\HH\ul{\Z/p}}$ is an equivariant equivalence for all $n$. Therefore we have that $X^{\wedge}_{p,\eta}\to \holim_{n}(X/(p^n,\eta^n))^{\wedge}_{\HH\ul{\Z/p}} \wkeq (X^{\wedge}_{p,\eta})^{\wedge}_{\HH\ul{\Z/p}}$. 
  \end{proof}

 \begin{lemma}[{\cite[Corollary 6.47]{HK}}]\label{lem:nil2nil}
  Let $X$ be a connective $C_2$-spectrum. There is a natural equivariant equivalence $X^{\wedge}_{\HH{\ul{\Z/2}}} \wkeq X^{\wedge}_{2}$.
 \end{lemma}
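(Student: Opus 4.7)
The plan is to combine \aref{prop:nilnil} with a direct comparison between $(2,\eta)$-completion and $2$-completion for connective $C_2$-spectra. Applying \aref{prop:nilnil} at $p=2$ already furnishes a natural equivariant equivalence $X^\wedge_{\HH\ul{\Z/2}} \wkeq X^\wedge_{2,\eta}$, so it suffices to show $X^\wedge_{2,\eta} \wkeq X^\wedge_2$ whenever $X$ is connective.

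Since $\SS_{C_2}/(2,\eta) \wkeq \SS_{C_2}/2 \wedge \SS_{C_2}/\eta$, every $\SS_{C_2}/2$-acyclic map is automatically $\SS_{C_2}/(2,\eta)$-acyclic, so $X^\wedge_{2,\eta}$ is automatically $2$-complete. The natural map $X^\wedge_2 \to X^\wedge_{2,\eta}$ is therefore a map between $2$-complete objects which is an $\SS_{C_2}/(2,\eta)$-equivalence by two-out-of-three, and it is an honest equivariant equivalence precisely when $X^\wedge_2$ is itself $\eta$-complete. Writing $X^\wedge_2 \wkeq \holim_n X/2^n$ and using that $\eta$-complete objects are closed under homotopy limits and extensions, this in turn reduces (via the cofiber sequences $X/2 \to X/2^n \to X/2^{n-1}$) to showing that $X/2$ is $\eta$-complete.

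The crux is the following nilpotence statement: there exists an integer $N$ with $\eta^N = 0$ in $\pi^{C_2}_{N\sigma}(\SS_{C_2}/2)$. Granted this, $\eta^N$ annihilates $X/2 \wkeq X\wedge \SS_{C_2}/2$ for every $X$, so $X/2$ is trivially $\eta$-complete and the reduction closes. The required nilpotence is precisely the content of \cite[Corollary 6.47]{HK}, which is the main external input and by far the most substantial step; its proof rests on a detailed Mackey-functor computation of the low-dimensional equivariant homotopy of the mod-$2$ Moore spectrum together with an analysis of the $C_2$-equivariant mod-$2$ Steenrod algebra. This $\eta$-nilpotence is a genuinely $C_2$-equivariant phenomenon with no analog in motivic homotopy theory over $\R$, so the argument cannot be imported from the motivic case; we therefore invoke \cite{HK} directly rather than reproving it here.
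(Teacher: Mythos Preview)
Your reduction to $\eta$-nilpotence on $\SS_{C_2}/2$ is valid, but the route differs from the paper's.  The paper works directly with the fiber $F$ of $X^\wedge_2\to X^\wedge_{2,\eta}$: on $F$ the map $\eta$ is an equivalence (since $F$ is the fiber of an $\eta$-completion map) and $\rho:F\to S^\sigma\wedge F$ is an equivalence (since $F$ is nonequivariantly contractible and $\cofiber(\rho)$ is free), so the single relation $\eta^2\rho=-2\eta$ forces $2$ to be invertible on $F$; as $F$ is also $2$-complete this gives $F\wkeq\ast$.  This replaces your black-box appeal to \cite{HK} with a two-line argument.  In fact the same relation already supplies the nilpotence you need: applying $\Phi^{C_2}$ and using $\Phi^{C_2}(\rho)=1$ shows $\Phi^{C_2}(\eta)\in\{0,-2\}$, so $\eta$ acts by zero on $\Phi^{C_2}(\SS_{C_2}/2)$; combined with nonequivariant nilpotence of $\eta$ and compactness of $\SS_{C_2}/2$, this gives $\eta^N=0$ on $\SS_{C_2}/2$ for some $N$ without any Mackey-functor computations.

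Two side remarks.  Your citation of \cite[Corollary~6.47]{HK} for the $\eta$-nilpotence is awkward: as the attribution of the lemma indicates, that corollary \emph{is} the statement being proved here, so invoking it for the key step is close to circular.  And your claim that there is no motivic analog is contradicted by \aref{thm:MASSconv}, which records (via \cite{HKO}) that $(2,\eta)$-completion agrees with $2$-completion over real closed fields.
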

\begin{proof}
 By the previous proposition it suffices to show that $i:X^{\wedge}_{2}\to X^{\wedge}_{2,\eta}$
is an equivariant equivalence. The map $i$ is an equivalence  after forgetting the action, so it suffices to show that it induces an isomorphism on $\pi^{C_2}_n$. Write $F$ for the homotopy fiber of $i$. Note that $\eta:S^{\sigma}\wedge F\to F$ is an equivariant equivalence. Note as well that $\rho:F\to S^{\sigma}\wedge F$ is a weak equivalence, since $F$ is nonequivariantly contractible. 
We have the relation $\eta^{2}\rho= -2\eta $. In particular, we find that $2$ is an equivalence on $F$ and so $F/2^s\wkeq \ast$ for all $s$. Since $F$ is $2$-complete, we have $F\wkeq \ast$.
 \end{proof}

Fix an embedding $\phi:k\hookrightarrow \R$
and consider the resulting $C_2$-equivariant Betti realization 
$\RRe_{B,\phi}^{C_2}:\SH_k\to \SH_{C_2}$ (see
\aref{sub:betti} for details).  By \aref{thm:bettibredon},
the equivariant Betti realization takes the motivic cohomology spectrum $\HH\ZZ/p$ to the Bredon cohomology spectrum
$\HH\ul{\ZZ/p}$ associated to the
constant Mackey functor $\ul{\ZZ/p}$.  Since $\RRe_{B,\phi}^{C_2}$ is
symmetric monoidal and takes the unit for $\HH\ZZ/p$ to the unit for
$\HH\ul{\ZZ/p}$, we see that $\RRe_{B,\phi}^{C_2}$ takes the
semi-cosimplicial $\P^1$-spectrum $(\HH\ZZ/p)^{\wedge \bullet}$ to the
semi-cosimplicial $C_2$-spectrum 
$(\HH\ul{\ZZ/p})^{\wedge \bullet}$.  The totalization spectral sequence for this latter
object is the \emph{$C_2$-equivariant Adams spectral sequence}, which
has been studied by P. Hu and I. Kriz \cite{HK} and the case $p=2$ of the following theorem is  \cite[Corollary 6.47]{HK}.

\begin{theorem}\label{thm:C2ASSconv}
Let $Y$ be either $\SS_{C_2}$ or $C_{2\,+}$. The $C_2$-equivariant Adams spectral sequence  
\[
  E_1^{s,t} = [S^t\wedge Y,  (\HH\ul{\ZZ/p})^{\wedge s}]_{C_{2}}
  \implies 
  [S^{t-s}\wedge Y, (\SS_{C_2})^\wedge_{p,\eta}]_{C_{2}}
\]
is strongly convergent. If $p=2$ then $(\SS_{C_2})^{\wedge}_{2,\eta} = (\SS_{C_2})^{\wedge}_2$.
\end{theorem}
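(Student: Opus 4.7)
The plan is to parallel the proof of \aref{thm:MASSconv}, exploiting the $C_2$-equivariant nilpotent completion machinery developed in the preceding propositions. The shape of the $E_1$-page is immediate from the totalization construction applied to the cosimplicial $C_2$-spectrum $Y \wedge (\HH\ul{\ZZ/p})^{\wedge \bullet}$, and the associated Tot-tower furnishes an $\HH\ul{\ZZ/p}$-nilpotent resolution of $Y$ in the sense of Bousfield.

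To identify the target, observe that both $Y = \SS_{C_2}$ and $Y = C_{2+}$ are connective, so the spectral sequence abuts to $[S^{t-s} \wedge Y, Y^{\wedge}_{\HH\ul{\ZZ/p}}]_{C_2}$. Applying \aref{prop:nilnil} identifies $Y^{\wedge}_{\HH\ul{\ZZ/p}}$ with $Y^{\wedge}_{p,\eta}$. For $p = 2$, \aref{lem:nil2nil} upgrades this further to $Y^{\wedge}_2$, yielding the second claim; the $p = 2$ case also appears as \cite[Corollary 6.47]{HK}.

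Strong convergence would then follow from \aref{prop:idem} applied to $E = \HH\ul{\ZZ/p}$, once its hypotheses are verified. I would check that both $\Phi^e(\HH\ul{\ZZ/p}) \simeq \HH\ZZ/p$ and $\Phi^{C_2}(\HH\ul{\ZZ/p})$ are bounded below with integer homology that is a finitely generated $\ZZ_{(p)}$-module in each degree; the underlying geometric fixed point case is obvious, and the $C_2$-fixed point case is a standard computation (well-documented for $p = 2$, and even simpler for odd $p$). Idempotence then identifies $Y^{\wedge}_{\HH\ul{\ZZ/p}}$ with the Bousfield localization $L_{\HH\ul{\ZZ/p}} Y$, placing us in the regime where standard Bousfield convergence applies.

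The main obstacle I anticipate is pinning down strong convergence with genuine equivariant care: because the tower involves both multiplication by $p$ and by $\eta$, the connectivity argument must simultaneously track the underlying and $C_2$-fixed point homotopy. The lemmas \aref{lem:oneeq} and \aref{lem:otoh} are designed precisely for this, reducing the issue to a small-cell calculation on $\SS_{C_2}/(p,\eta)$, after which induction along the Adams tower should yield increasing Mackey-functor connectivity of the successive quotients and hence $\holim_s X_s \simeq *$ with bounded filtration in each total degree.
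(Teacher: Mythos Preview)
Your approach is essentially the same as the paper's. The paper argues that the Tot-tower spectral sequence agrees with the $\overline{\HH\ul{\Z/p}}$-tower spectral sequence, replaces $Y$ by $Y^{\wedge}_{\HH\ul{\Z/p}}$, and then uses idempotence $Y^{\wedge}_{\HH\ul{\Z/p}}\wkeq (Y^{\wedge}_{\HH\ul{\Z/p}})^{\wedge}_{\HH\ul{\Z/p}}$ (exactly your appeal to \aref{prop:idem}) to get $\holim_s Y^{\wedge}_{\HH\ul{\Z/p}}\wedge \overline{\HH\ul{\Z/p}}^{\wedge s}\wkeq *$, followed by the identifications from \aref{prop:nilnil} and \aref{lem:nil2nil}; your verification of the hypotheses of \aref{prop:idem} for $E=\HH\ul{\Z/p}$ is the only step the paper leaves implicit. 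One small clarification: \aref{lem:oneeq} and \aref{lem:otoh} feed into \aref{prop:nilnil} (the identification of the target), not into the convergence step itself, so your final paragraph is redundant once you have invoked \aref{prop:idem}.
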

\begin{proof}
The spectral sequence associated to the Tot-tower of the semi-cosimplicial object 
$Y\wedge(\HH\ul{\ZZ/p})^{\wedge s}$ agrees with the spectral sequence associated to the 
tower  $\{Y\wedge \overline{\HH{\ul{\Z/p}}}^{\wedge s}\}$. This in turn agrees with the spectral sequence with $Y$ replaced by $Y^{\wedge}_{\HH\ul{\Z/p}}$. This spectral 
sequence converges to $Y^{\wedge}_{\HH\ul{\Z/p}}$ since we have that
$\holim_s Y^{\wedge}_{\HH{\ul{\Z/p}}}\wedge \overline{\HH{\ul{\Z/p}}}^{\wedge s} \wkeq \ast$ (as $Y^{\wedge}_{\HH\ul{\Z/p}}\wkeq (Y^{\wedge}_{\HH\ul{\Z/p}})^{\wedge}_{\HH\ul{\Z/p}}$).
Together with the identifications of \aref{prop:nilnil} and \aref{lem:nil2nil}, this establishes the result.
  \end{proof}

By comparing these two Adams spectral sequences, we obtain the following
result.

\begin{proposition}\label{prop:BettiSubR}
Let $k$ be real closed, set $L=k[i]$, and let
$\phi:k\hookrightarrow\RR$ be an embedding of fields. Then the induced maps
\begin{enumerate}
\item[(i)] $\RRe_{B,\phi}^{C_2}:[S^n,(\SS_k)^\wedge_{p,\eta}]_k\xrightarrow{\iso} [S^n,(\SS_{C_2})^\wedge_{p,\eta}]_{C_2}$, and
\item[(ii)] $\RRe_{B,\phi}^{C_2}:[\spec(L)_{+}\wedge S^n,(\SS_k)^\wedge_{p,\eta}]_k\xrightarrow{\iso} [C_{2\,+}\wedge S^n,(\SS_{C_2})^\wedge_{p,\eta}]_{C_2}$
\end{enumerate}
are isomorphisms for any $n\in \Z$. For $p=2$, the induced maps $[S^n,(\SS_k)^\wedge_{2}]_k\xrightarrow{\iso} [S^n,(\SS_{C_2})^\wedge_{2}]_{C_2}$ and 
$[\spec(L)_{+}\wedge S^n,(\SS_k)^\wedge_{2}]_k\xrightarrow{\iso} [C_{2\,+}\wedge S^n,(\SS_{C_2})^\wedge_{2}]_{C_2}$ are isomorphisms.
\end{proposition}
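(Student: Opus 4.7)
The plan is to compare the motivic and $C_2$-equivariant Adams spectral sequences via Betti realization. Since $\RRe^{C_2}_{B,\phi}$ is strong symmetric monoidal, sends $\SS_k \mapsto \SS_{C_2}$ and $\spec(L)_+ \mapsto C_{2\,+}$, and by \aref{thm:bettibredon} takes $\HH\Z/p$ to $\HH\ul{\Z/p}$, applying it to the semi-cosimplicial spectrum $(\HH\Z/p)^{\wedge \bullet}$ yields the semi-cosimplicial $C_2$-spectrum $(\HH\ul{\Z/p})^{\wedge \bullet}$. Consequently $\RRe^{C_2}_{B,\phi}$ induces a map from the motivic Adams spectral sequence of \aref{thm:MASSconv} to the $C_2$-equivariant Adams spectral sequence of \aref{thm:C2ASSconv}, compatible with abutments in the required way.

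Both spectral sequences are strongly convergent by the cited theorems, so to prove (i) and (ii) it suffices to verify that the comparison map is an isomorphism on $E_1$-pages; concretely, that for $Y \in \{\SS_k,\ \spec(L)_+\}$ the map
\[
 \RRe^{C_2}_{B,\phi}\colon [S^t \wedge Y,\,(\HH\Z/p)^{\wedge s}]_k \longrightarrow [S^t \wedge \RRe^{C_2}_{B,\phi}(Y),\,(\HH\ul{\Z/p})^{\wedge s}]_{C_2}
\]
is an isomorphism for all $s,t$. The source is a weight-zero mod-$p$ motivic cohomology group of $Y$, since the topological sphere factor $S^t$ contributes no $\G_m$ or $\A^1\setminus 0$; the target is the corresponding Bredon cohomology group of $\RRe^{C_2}_{B,\phi}(Y)$.

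The main obstacle is establishing this weight-zero isomorphism. Here one invokes the equivariant analogue of the Suslin--Voevodsky theorem (\aref{thm:SV}), which for the torsion coefficients $\Z/p$ identifies the weight-zero motivic cohomology of a smooth $k$-scheme $X$ with the Bredon cohomology of the $C_2$-space $X(\C)$; applied to $X = \spec(k)$ and $X = \spec(L)$ (the latter realizing to $C_2$), this gives the desired $E_1$-page isomorphism. Strong convergence then transports it to the required isomorphism of abutments.

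Finally, the $p=2$ refinement follows immediately: by \aref{thm:MASSconv} and \aref{thm:C2ASSconv} the $(2,\eta)$-completions coincide with the $2$-completions on both sides (since $k$ real closed gives $cd_2(k[i])<\infty$), so the isomorphisms of (i) and (ii) at $p=2$ specialize to isomorphisms for the $2$-complete abutments.
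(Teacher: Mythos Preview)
Your overall architecture is exactly the paper's: compare the motivic and $C_2$-equivariant Adams spectral sequences via $\RRe^{C_2}_{B,\phi}$, check the $E_1$-pages match, and use strong convergence from \aref{thm:MASSconv} and \aref{thm:C2ASSconv}. The $p=2$ addendum is also handled as in the paper.

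There is, however, a genuine gap at the $E_1$-comparison. You assert that $[S^t\wedge Y,(\HH\Z/p)^{\wedge s}]_k$ is ``a weight-zero mod-$p$ motivic cohomology group of $Y$''. That is only true for $s=1$. For $s\ge 2$ the target is the iterated smash product $(\HH\Z/p)^{\wedge s}$, not $\HH\Z/p$, and these groups are not motivic cohomology groups of $Y$ in any weight. Correspondingly, your description of \aref{thm:SV} as a weight-zero comparison is too narrow; as stated, \aref{thm:SV} applies to any $E$ in the localizing subcategory generated by $X_+\wedge \HH\Z/r$ for $X$ smooth projective, and one must check that $(\HH\Z/p)^{\wedge s}$ (and $\spec(L)_+\wedge(\HH\Z/p)^{\wedge s}$) lies there.

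This is precisely the step the paper supplies and you omit: Voevodsky's computation of the motivic Steenrod algebra gives a splitting $\HH\Z/p\wedge \HH\Z/p\simeq \bigvee_i \Sigma^{p_i+q_i\alpha}\HH\Z/p$ with $q_i\ge 0$, and iterating shows each $(\HH\Z/p)^{\wedge s}$ is a wedge of \emph{effective} bigraded suspensions of $\HH\Z/p$. The effectivity ($q_i\ge 0$) is what places these summands in the localizing subcategory of \aref{thm:SV} (negative $\alpha$-suspensions would not obviously lie there), and then \aref{thm:SV} yields the $E_1$-isomorphism. Without this Steenrod-algebra input your argument does not go through for $s\ge 2$.
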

\begin{proof}
We have already noted that 
$\RRe_{B,\phi}^{C_2}(\HH\ZZ/p)^{\wedge s}\simeq
\HH\ul{\ZZ/p}^{\wedge s}$, and that we have a map of Adams spectral
sequences.  
The computation of the motivic Steenrod algebra
\cite{Voevodsky:reducedpower, Voevodsky:Zl} shows that we have a decomposition $\HH\Z/p\wedge \HH\Z/p \wkeq \vee \Sigma^{p_{i}+q_{i}\alpha}\HH\Z/p$ for appropriate $(p_{i},q_{i})$ which in particular satisfy $q_{i}\geq 0$.
It follows from \aref{thm:SV} that the equivariant Betti realization induces an isomorphism on the weight zero $E_1$-page of the Adams spectral sequences.
By \aref{thm:MASSconv} and
\aref{thm:C2ASSconv}, the proposition follows.
\end{proof}

\subsection{Rational homotopy groups}\label{sub:rat}
For a (motivic or equivariant) spectrum $X$ we write $X_{\Q}$ for the Bousfield localization at $\MM \Q$, the rational Moore spectrum.
If $Y$ is a compact spectrum, then  $[Y,X_{\Q}] = [Y,X]\otimes \Q$.

The homotopy groups of the equivariant rational sphere spectrum are rather simple.
\begin{proposition}\label{prop:equivariantQ}
 The homotopy groups of the rational $C_2$-sphere are
 $\pi_{0}(\SS_{C_2})_{\Q} = \Q\oplus \Q$ and 
 $\pi_{n}(\SS_{C_2})_{\Q} = 0$ for any integer $n\neq 0$.
\end{proposition}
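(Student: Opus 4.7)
The plan is to invoke the tom Dieck splitting to reduce the calculation to two classical nonequivariant statements. For any finite group $G$, the tom Dieck splitting provides a natural isomorphism
$$\pi_n^G(\SS_G) \cong \bigoplus_{(H) \leq G} \pi_n^s((BW_GH)_+),$$
indexed by conjugacy classes of subgroups, where $W_GH = N_GH/H$ denotes the Weyl group. For $G = C_2$ there are exactly two conjugacy classes: the trivial subgroup (with Weyl group $C_2$) and $C_2$ itself (with trivial Weyl group), so the splitting becomes
$$\pi_n^{C_2}(\SS_{C_2}) \cong \pi_n^s((BC_2)_+) \oplus \pi_n^s.$$

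The next step is to rationalize each summand. By Serre's classical finiteness theorem, $\pi_n^s \otimes \Q$ is $\Q$ in degree zero and vanishes otherwise. For the other factor I would run the rational Atiyah--Hirzebruch spectral sequence, which collapses to $H_\ast(BC_2; \Q)$; this rational homology is concentrated in degree zero since the integral homology of $BC_2$ is $2$-torsion in positive degrees. Hence $\pi_n^s((BC_2)_+) \otimes \Q$ is $\Q$ in degree zero and vanishes for $n \neq 0$ as well. Summing the two contributions then gives $\pi_0(\SS_{C_2})_\Q \cong \Q \oplus \Q$ and $\pi_n(\SS_{C_2})_\Q = 0$ for $n \neq 0$.

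There is no serious obstacle here, as every ingredient is a standard classical theorem; the proof is essentially a bookkeeping exercise once the tom Dieck decomposition is set up. As a sanity check, the degree-zero answer is consistent with the identification $\pi_0^{C_2}(\SS_{C_2}) \cong A(C_2) \cong \Z \oplus \Z$ via the Burnside ring (with generators the classes of $C_2/C_2$ and $C_2/e$), which rationalizes to $\Q \oplus \Q$.
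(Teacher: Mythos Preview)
Your argument is correct, but it takes a different route from the paper's. The paper simply cites the structural fact (Greenlees--May) that for any finite group $G$ the rational sphere $(\SS_G)_\Q$ is weakly equivalent to the Eilenberg--MacLane spectrum $\HH\ul{A_\Q}$ on the rational Burnside Mackey functor; the proposition is then immediate, and one gets the full Mackey functor $\ul{\pi}_*$ at once rather than just its value at $C_2/C_2$. Your approach instead decomposes via tom Dieck splitting and reduces to two nonequivariant facts (Serre finiteness and the rational acyclicity of $BC_2$), which is more elementary in that it avoids the cited result and makes the link to classical Serre finiteness explicit. Both arguments extend verbatim to arbitrary finite $G$, since $H_*(BW_GH;\Q)$ is concentrated in degree $0$ for any finite Weyl group. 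One small point: the statement covers all $n\neq 0$, so you might remark that the negative range is trivial because the summands $\pi_n^s((BC_2)_+)$ and $\pi_n^s$ already vanish for $n<0$.
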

\begin{proof}
This follows immediately from the well known fact (see \emph{e.g.}~\cite[Corollary A.6]{GM:Tate}) that for any finite group, $(\SS_{G})_{\Q}$ is weakly equivalent to $\HH\ul{A_{\Q}}$, the Eilenberg-MacLane spectrum associated to the rational Burnside Mackey functor.
\end{proof}

Conjecturally the higher homotopy groups of the sphere also vanish.

\begin{conjecture}[Motivic Serre finiteness]\label{Serrefiniteness}
Let $k$ be a field. Then $\pi_{n}(\SS_{k})_{\Q} = 0$ for $n>0$.
\end{conjecture}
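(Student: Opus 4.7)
My plan is to follow the Morel-style approach to rational motivic stable homotopy theory, exploiting the fact that rationally the sphere spectrum splits into a ``plus'' and ``minus'' part. Let $\tau \in \End_{\SH_k}(\SS_k)$ be the element represented by the swap map on $S^{\alpha}\wedge S^\alpha$ (this is Morel's $\epsilon = -\langle -1\rangle$ in $GW(k)$). After rationalization, $\tau$ is an involution, and the orthogonal idempotents $e^\pm = \tfrac{1}{2}(1\mp\tau)$ yield a decomposition
\[
  (\SS_k)_\Q \wkeq (\SS_k)_\Q^+ \oplus (\SS_k)_\Q^-.
\]
It therefore suffices to prove the vanishing $\pi_n(-)=0$ for $n>0$ on each summand separately.

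For the plus summand I would invoke the identification, due to Morel (and refined by Cisinski--D\'eglise), of $(\SS_k)_\Q^+$ with the rational Beilinson motivic cohomology spectrum $\HH\Q_B$. Under this identification the relevant homotopy groups become rational motivic cohomology of $\spec(k)$ in topological bidegree $(n,0)$, which reads $H^{-n,0}_{\mathcal{M}}(k,\Q)$. Since motivic cohomology vanishes outside the region $0 \le p \le 2q$ and in particular is zero when $q=0$ and $p<0$, the plus part contributes nothing for $n>0$.

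For the minus summand, the strategy would be to identify $(\SS_k)_\Q^-$ with a derived Witt-theoretic spectrum (so that its stable homotopy groups compute a form of rational Witt cohomology via a Gersten-type spectral sequence along residue fields), and then to prove the appropriate vanishing of these Witt cohomology groups in positive topological degree and zero Tate weight. When $-1$ is a sum of squares in $k$, the class $\langle -1\rangle$ equals $1$ in $GW(k)_\Q$, so $\tau$ acts as $-1$ on $(\SS_k)_\Q$; consequently $e^- = 0$ and $(\SS_k)_\Q^-$ is contractible, recovering the known case alluded to in the introduction.

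The main obstacle is the minus part over fields like real closed fields where $\langle -1\rangle \ne 1$, so that $(\SS_k)_\Q^-$ is genuinely nontrivial (it carries the rational Witt ring in degree zero). Controlling its higher homotopy rationally requires input beyond what Morel's connectivity and $\pi_0$ computations directly provide: one needs either a complete computation of a motivic analogue of the Adams--Novikov $E_\infty$-page rationally, or a structural theorem expressing $(\SS_k)_\Q^-$ as an Eilenberg--MacLane-type object for the Witt sheaves analogous to the plus side. Establishing such an identification is essentially the content of the conjecture, and I would expect to have to proceed field-by-field (e.g., reducing to finitely generated or even real closed base fields) before attempting a general statement.
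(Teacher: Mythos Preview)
The statement you were asked to prove is stated in the paper as a \emph{conjecture}, not a theorem: the paper gives no proof of it, so there is nothing to compare your argument against. Your write-up is not a proof either, and you correctly recognize this by the end: you reduce the question to the minus summand $(\SS_k)_\Q^-$ and then acknowledge that controlling its higher homotopy ``is essentially the content of the conjecture.'' That is an honest assessment, and it matches exactly the paper's own discussion following the conjecture. The paper likewise splits $(\SS_k)_\Q$ via the idempotents $\epsilon_\pm$, invokes Morel and Cisinski--D\'eglise to identify $(\SS_k)_\Q^+$ with $\HH\Q$ (settling the case where $-1$ is a sum of squares), and then notes that Morel's conjectural description of $(\SS_k)_\Q^-$ would imply the general case.

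So there is no gap in your reasoning beyond the one you already flag: the minus part over formally real fields is a genuine open problem, and your proposal does not (and could not, with current technology) close it. Your sketch of the plus part and the reduction strategy are correct and are precisely the known partial results the paper records.
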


\begin{definition}
We say that a field $k$ \emph{has motivic Serre finiteness} if \autoref*{Serrefiniteness} holds over $k$.
\end{definition}

Rationally (in fact already when $2$ is inverted) there are orthogonal idempotents
$\epsilon_{+} = (\epsilon - 1)/2$ and $\epsilon_{-}= (\epsilon + 1)/2$ acting on $(\SS_{k})_{\Q}$, obtained from $\epsilon\in\pi_{0} \SS_{k}$.\footnote{Recall that $\epsilon$ is the stable map induced by the permutation $\A^1\setminus\{0\}\wedge \A^1\setminus\{0\} \iso \A^1\setminus\{0\}\wedge \A^1\setminus\{0\}$.} 
We thus obtain a rational decomposition of the sphere spectrum
$(\SS_{k})_{\Q} = (\SS_{k})_{\Q}^{+}\vee (\SS_{k})^{-}_{\Q}$ in which the factors correspond respectively to  inverting $\epsilon_+$ and $\epsilon_{-}$. It follows from
Morel's description \cite{Morel:stable} of $(\SS_{k})_{\Q}^{+}$ as the rational motivic cohomology spectrum $\HH\Q$ 
(see \cite[Theorem 16.2.13]{CD:motives}) 
that $k$ has motivic Serre finiteness whenever $-1$ is sum of squares in $k$ (in which case
$(\SS_{k})^{-}_{\Q}$ vanishes). Morel \cite{Morel:stable} also conjectures a description of $(\SS_{k})_{\Q}^{-}$ which would imply that motivic Serre finiteness holds in general.

\begin{proposition}\label{prop:rational}
 Let $k$ be a real closed field, set $L=k[i]$, and let
 $\phi:k\hookrightarrow \R$ be an embedding. Assume that $k$ has motivic Serre finiteness.
 Then the maps
 \begin{enumerate}
 \item[(i)] $\RRe^{C_{2}}_{B,\phi}:  [S^n, (\SS_k)_{\Q}]_{k} \xrightarrow{\iso} 
[S^n,  (\SS_{C_2})_{\Q}]_{C_{2}}$, and
\item[(ii)] $\RRe^{C_{2}}_{B,\phi}:  [\spec(L)_{+}\wedge S^n, (\SS_k)_{\Q}]_{k} \xrightarrow{\iso} 
[C_{2\,+}\wedge S^n,  (\SS_{C_2})_{\Q}]_{C_{2}}$
\end{enumerate}
are isomorphisms for any $n\in \Z$. 
\end{proposition}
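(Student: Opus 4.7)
The plan is to show that both sides of (i) and (ii) vanish for $n\neq 0$ and then identify the $\pi_{0}$-level comparison as an isomorphism.

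For (i), I would first use Morel's stable connectivity theorem to get $[S^{n},(\SS_{k})_{\Q}]_{k}=0$ for $n<0$, the hypothesis of motivic Serre finiteness to get vanishing for $n>0$, and Morel's theorem identifying $\pi_{0}(\SS_{k})=GW(k)$ to obtain $GW(k)_{\Q}\cong\Q\oplus\Q$ at $n=0$ (as $k$ is real closed). The equivariant side is handled by \autoref{prop:equivariantQ}: it vanishes for $n\ne 0$ and equals $A(C_{2})_{\Q}\cong\Q\oplus\Q$ at $n=0$. To see the $n=0$ map is an isomorphism, I would use that $\RRe^{C_{2}}_{B,\phi}\circ c^{*}_{L/k}=\id$, so Betti realization is a retraction of Dress's ring map $A(C_{2})\to GW(k)$; by \autoref{thm:necessary} this map is an isomorphism when $k$ is real closed, so its retraction is the inverse isomorphism.

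For (ii), the strategy is to reduce to $L$. The base-change adjunction (\autoref{sub:change}) identifies $[\spec(L)_{+}\wedge S^{n},(\SS_{k})_{\Q}]_{k}\cong[S^{n},(\SS_{L})_{\Q}]_{L}$, and the free/forget adjunction on the equivariant side gives $[C_{2\,+}\wedge S^{n},(\SS_{C_{2}})_{\Q}]_{C_{2}}\cong\pi_{n}(\SS)_{\Q}$. Since $-1=i^{2}$ in $L$, motivic Serre finiteness holds unconditionally over $L$ by the discussion preceding \autoref{Serrefiniteness}. Combined with stable connectivity this forces the motivic side to be concentrated in $n=0$ with value $GW(L)_{\Q}=\Q$ (as $L$ is quadratically closed). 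Classical Serre finiteness identifies the equivariant side as $\Q$ concentrated in $n=0$, and the $n=0$ map is the rank isomorphism, again realized by Betti realization via the identity $\RRe^{C_{2}}_{B,\phi}\circ c^{*}=\id$.

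The main obstacle I anticipate is the $\pi_{0}$-level identifications, which rely on the calculation from \autoref{sec:classical} that $A(C_{2})\to GW(k)$ is an isomorphism precisely when $k$ is real closed and $L=k[i]$; all the vanishing statements follow routinely from the stated hypotheses. Without motivic Serre finiteness the motivic side could contain positive-degree contributions invisible to the equivariant side, which is precisely why the proposition is stated conditionally.
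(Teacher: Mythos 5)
Your proposal is correct and follows the same basic strategy as the paper: show both sides are concentrated in degree zero and then match the $\pi_0$-level maps. The paper's argument is more compressed: it uses $\RRe^{C_2}_{B,\phi}\circ c^*_{L/k}=\id$ to get surjectivity in \emph{every} degree, then observes that $GW(k)_\Q\cong\Q^2$ (resp.\ $GW(L)_\Q\cong\Q$) matches the rank of $A(C_2)_\Q$ (resp.\ $\Q$) and that a surjective linear map between finite-dimensional $\Q$-vector spaces of equal dimension is an isomorphism, then quotes \aref{prop:equivariantQ}, motivic Serre finiteness, and Morel connectivity for vanishing elsewhere. You instead identify the degree-zero map as the retraction of Dress's ring map $h_{L/k}:A(C_2)\to GW(k)$ and invoke \aref{thm:necessary} to conclude; this is valid but slightly heavier, since the dimension count already suffices once surjectivity from the retraction identity is in hand. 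Your explicit observation that $L$ has motivic Serre finiteness unconditionally (because $-1=i^2$) is a useful clarification the paper leaves implicit: the hypothesis on $k$ is genuinely needed only for part (i), while part (ii) after passing through the $(p_\#,p^*)$ and induction--restriction adjunctions only requires Serre finiteness over $L$ and over a point.
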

\begin{proof}
Since $\RRe^{C_{2}}_{B,\phi}\circ c^{*}_{L/k} = \id$, we know that the
map of the proposition is surjective. Since $GW(k)= \Z\oplus\Z$ and
$GW(L)= \Z$ for any real closed field $k$, it follows that the first map is an isomorphism in degree zero. By the previous propositions, these groups are zero in all other degrees.
\end{proof}

Write $\epsilon\in\pi_{0}^{C_2}(S^0)$ for the stable map induced by the permutation $S^{\sigma}\wedge S^{\sigma}\to S^{\sigma}\wedge S^{\sigma}$. As in the motivic setting, once $2$ is invertible there are idempotents $\epsilon_+=(\epsilon-1)/2$ and $\epsilon_-=(\epsilon +1)/2$ that induce a splitting $\SS_{C_2}[1/2]=\SS_{C_2}[1/2]^{+}\vee \SS_{C_2}[1/2]^{-}$.

\begin{lemma}\label{lem:eta12}
 Let $k$ be a field and $X$ and object of $\SH_{k}$. Then 
 $(X[1/2])^{\wedge}_{\eta} = (X[1/2])^{+}$. Similarly if $W$ is a $C_2$-spectrum, then $(W[1/2])^{\wedge}_{\eta}= W[1/2]^{+}$.
\end{lemma}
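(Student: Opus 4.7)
My plan is to split $X[1/2] \simeq X[1/2]^+ \vee X[1/2]^-$ using the orthogonal idempotents $\epsilon_\pm$ available after inverting $2$. Since $\eta$-completion is the Bousfield localization at $\SS_k/\eta$ (respectively $\SS_{C_2}/\eta$ equivariantly), it commutes with the finite wedge, reducing the lemma to showing that the $+$ summand is already $\eta$-complete and that the $-$ summand is $\SS/\eta$-acyclic, i.e.\ $\eta$-periodic.

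For the $+$ summand, $\epsilon$ acts as $-1$ by the definition $\epsilon_+ = (\epsilon-1)/2$. Morel's Milnor--Witt relation $\eta h = 0$ with $h = 1-\epsilon$ then yields $2\eta = 0$ on $X[1/2]^+$, and since $2$ is inverted, $\eta$ acts as zero on this summand. A spectrum $Z$ on which $\eta$ acts as zero is automatically $\eta$-complete: for any map $f\colon W \to Z$ from an $\eta$-periodic spectrum $W$, the identity $\eta_Z \circ \Sigma^{1,1}f = f \circ \eta_W$ combined with $\eta_Z = 0$ and the invertibility of $\eta_W$ forces $f = 0$. Thus $(X[1/2]^+)^\wedge_\eta \simeq X[1/2]^+$. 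The equivariant argument is verbatim the same, using the parallel relation in $\pi_0^{C_2}\SS_{C_2}$.

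For the $-$ summand, $\epsilon$ acts as $+1$, so $h$ vanishes and the relation $\eta h = 0$ gives no information. Here I will invoke the structural result, due to Morel in the motivic setting and developed further by Bachmann, that after inverting $2$ the minus part of the sphere coincides with its $\eta$-periodization, i.e.\ the natural map $\SS_k[1/2]^- \to \SS_k[1/2, \eta^{-1}]$ is an equivalence, and analogously for $\SS_{C_2}$. Consequently $\eta$ acts invertibly on $X[1/2]^-$, so $X[1/2]^-$ is $\SS/\eta$-acyclic and $(X[1/2]^-)^\wedge_\eta \simeq 0$. Combining the two computations yields the lemma.

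The main obstacle is this last step. The Milnor--Witt relation $\eta h = 0$ only handles half of the splitting, forcing $\eta = 0$ on the summand where $h$ is a unit; establishing that $\eta$ acts invertibly on the complementary summand, where $h = 0$, requires genuinely deeper input on the structure of the $2$-local sphere, rather than just the formal consequences of $\eta h = 0$.
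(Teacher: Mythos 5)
Your argument is correct and matches the paper's proof: both split $X[1/2]$ along the $\epsilon$-idempotents, then show $\eta$ is zero on the plus summand (so it is already $\eta$-complete) and invertible on the minus summand (so its $\eta$-completion vanishes). One correction to your closing remark, though. You say that $\eta$-invertibility on the minus summand requires ``genuinely deeper input'' than the formal consequences of $\eta h=0$, and you invoke Bachmann's identification of the $\eta$-periodic $2$-local sphere to supply it. In fact the invertibility follows from the same $\pi_0$-level Milnor--Witt package you already used for the plus half: the relation $\langle u\rangle = 1 + \eta[u]$, taken at $u=-1$, gives $\eta\rho = \langle -1\rangle - 1 = -\epsilon - 1$, which on the minus summand (where $\epsilon=1$) equals $-2$, a unit once $2$ is inverted; hence $\eta$ acts invertibly on $X[1/2]^-$, with $\rho$ (up to a scalar) as a two-sided inverse. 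The equivariant statement is handled by the analogous $\pi_0$-computation in $A(C_2)$, or equivalently by observing that geometric fixed points give an equivalence on the minus part and send $\eta$ to $\pm 2$. Invoking Bachmann's periodicity theorem here inverts the logical order of dependency: that theorem is a considerably stronger structural statement, and what this lemma needs is only the elementary $\pi_0$-identity above.
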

\begin{proof}
We have that $(X[1/2])^{\wedge}_{\eta} = (X[1/2]^+)^{\wedge}_{\eta}\vee (X[1/2]^-)^{\wedge}_{\eta}$.
From the relation $\epsilon\eta = \eta$, we find that $\eta:S^{\alpha}\wedge X[1/2]^+\to X[1/2]^+$ is zero and hence $X[1/2]^+$ is $\eta$-complete. On the other hand $\eta:S^{\alpha}\wedge X[1/2]^-\to X[1/2]^-$ is an equivalence and so $(X[1/2]^-)^{\wedge}_{\eta}\wkeq \ast$. It follows that $(X[1/2])^{\wedge}_{\eta} = (X[1/2])^{+}$ as desired. A similar analysis applies in the equivariant setting.
\end{proof}

\begin{corollary}\label{lem:etaplus}
Let $k$ be a real closed field, set $L=k[i]$, and let
 $\phi:k\hookrightarrow \R$ be an embedding. If $X$ is in $\SH_k$ and satisfies the condition that $\RRe^{C_2}_{B,\phi}:\pi_n(X_{\Q})\to \pi_n(\RRe^{C_2}_{B,\phi}(X_{\Q}))$ is an isomorphism, then $\RRe^{C_2}_{B,\phi}:\pi_n((X_{\Q})^{\wedge}_{\eta})\to \pi_n(\RRe^{C_2}_{B,\phi}(X_{\Q})^{\wedge}_{\eta})$ is also an isomorphism.
\end{corollary}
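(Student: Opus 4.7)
The strategy is to reduce the statement about $\eta$-completion to a statement about the $\epsilon_+$-summand, where the hypothesis can be applied directly.

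First I would invoke \aref{lem:eta12}. Since rationalization inverts $2$, both $X_\Q$ and $\RRe^{C_2}_{B,\phi}(X_\Q)$ are $2$-local, so \aref{lem:eta12} identifies
\[
(X_\Q)^\wedge_\eta \wkeq (X_\Q)^+ \quad\text{and}\quad \RRe^{C_2}_{B,\phi}(X_\Q)^\wedge_\eta \wkeq \RRe^{C_2}_{B,\phi}(X_\Q)^+,
\]
where $(-)^+$ denotes the summand cut out by the idempotent $\epsilon_+ = (\epsilon - 1)/2$ in each setting.

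Next I would observe that the $C_2$-equivariant Betti realization $\RRe^{C_2}_{B,\phi}$ is a symmetric monoidal functor and that it sends the motivic permutation map $\epsilon: S^\alpha \wedge S^\alpha \to S^\alpha \wedge S^\alpha$ to the equivariant permutation map $\epsilon: S^\sigma \wedge S^\sigma \to S^\sigma \wedge S^\sigma$. Consequently, realization carries the motivic idempotent $\epsilon_+$ to the equivariant idempotent $\epsilon_+$, and therefore it is compatible with the splittings into $+$ and $-$ components. In particular, there is a natural isomorphism
\[
\RRe^{C_2}_{B,\phi}(X_\Q)^+ \wkeq \RRe^{C_2}_{B,\phi}\bigl((X_\Q)^+\bigr).
\]

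Finally, the hypothesis that $\RRe^{C_2}_{B,\phi}:\pi_n(X_\Q) \to \pi_n(\RRe^{C_2}_{B,\phi}(X_\Q))$ is an isomorphism, combined with compatibility with the idempotent decomposition, yields an isomorphism on the $+$-summands:
\[
\pi_n\bigl((X_\Q)^+\bigr) \xrightarrow{\iso} \pi_n\bigl(\RRe^{C_2}_{B,\phi}(X_\Q)^+\bigr).
\]
Translating back through \aref{lem:eta12} gives the desired isomorphism on $\eta$-completed homotopy groups. There is no real obstacle here; the corollary is essentially a bookkeeping consequence of \aref{lem:eta12} together with the symmetric monoidal nature of Betti realization.
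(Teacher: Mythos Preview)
Your proof is correct and follows essentially the same approach as the paper: use \aref{lem:eta12} to identify the $\eta$-completion of a rational spectrum with its $\epsilon_+$-summand, observe that $\RRe^{C_2}_{B,\phi}$ respects the $\pm$-decomposition, and conclude that the isomorphism on $\pi_n(X_\Q)$ restricts to an isomorphism on the $+$-summand. The paper's version is terser, simply asserting that the realization map decomposes as a direct sum $(\RRe^{C_2}_{B,\phi})^+\oplus(\RRe^{C_2}_{B,\phi})^-$; your added remark that this compatibility comes from $\RRe^{C_2}_{B,\phi}$ being symmetric monoidal and carrying $\epsilon$ to $\epsilon$ is a useful justification the paper leaves implicit.
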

\begin{proof} 
The map  
$\RRe^{C_2}_{B,\phi}:\pi_n(X_{\Q})\to \pi_n(\RRe^{C_2}_{B,\phi}(X_{\Q}))$ is a direct sum of maps  
$$
(\RRe^{C_2}_{B,\phi})^{+}\oplus (\RRe^{C_2}_{B,\phi})^- :\pi_n(X_{\Q}^+)\oplus \pi_n(X_{\Q}^-)\to \pi_n(\RRe^{C_2}_{B,\phi}(X_{\Q})^+)\oplus \pi_n(\RRe^{C_2}_{B,\phi}(X_{\Q})^-).
$$
The result will thus follows from the previous lemma.
\end{proof}

\subsection{Full and faithful embedding }
We now assemble the previous computations to deduce our main theorem. 

\begin{proposition}\label{prop:Bettiintegral}
 Let $k$ be a real closed field, set $L=k[i]$, and let
 $\phi:k\hookrightarrow \R$ be an embedding. Assume that $k$ has motivic Serre finiteness.
  Then
 \begin{enumerate}
\item[(i)] $\RRe^{C_{2}}_{B,\phi}:  [S^n,(\SS_{k})^{\wedge}_{\eta}]_{k} \xrightarrow{\iso} 
[S^n,  (\SS_{C_{2}})^{\wedge}_{\eta}]_{C_{2}}$, and
\item[(ii)] $\RRe^{C_{2}}_{B,\phi}:  [\spec(L)_+\wedge S^n,(\SS_{k})^{\wedge}_{\eta}]_{k} \xrightarrow{\iso} 
[C_{2\,+}\wedge S^n,  (\SS_{C_{2}})^{\wedge}_{\eta}]_{C_{2}}$
\end{enumerate}
are isomorphisms for all $n\in\Z$. 
\end{proposition}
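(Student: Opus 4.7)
The plan is to combine the $(p,\eta)$-complete isomorphisms of \aref{prop:BettiSubR} (ranging over all primes $p$) with the rational $\eta$-complete isomorphism obtained by applying \aref{lem:etaplus} to \aref{prop:rational}, via an arithmetic fracture argument on the kernels.

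Since $\RRe^{C_2}_{B,\phi}\circ c^*_{L/k}=\id$, both maps (i) and (ii) are split surjective on morphism groups, so it suffices to show that the kernel $K_n$ of each map vanishes. First, applying \aref{lem:etaplus} with $X=\SS_k$---whose hypothesis is supplied by \aref{prop:rational}, which requires motivic Serre finiteness---yields that the rationalized map is an isomorphism, hence $K_n\otimes\Q=0$ and $K_n$ is torsion. Next, using the identifications $((\SS_k)^\wedge_\eta)^\wedge_p\simeq(\SS_k)^\wedge_{p,\eta}$ and $((\SS_{C_2})^\wedge_\eta)^\wedge_p\simeq(\SS_{C_2})^\wedge_{p,\eta}$, which hold because $(p,\eta)$-localization factors as iterated $p$- and $\eta$-localization, \aref{prop:BettiSubR} shows that the Betti realization map becomes an isomorphism after $p$-completion for every prime $p$. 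Because $K_n$ is a direct summand of its source (via the splitting), $p$-completion preserves this factor, so $K_n^{\wedge}_p=0$ for each prime $p$. A torsion abelian group whose $p$-primary component vanishes for every prime must itself be zero, and hence $K_n=0$, as desired.

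The main obstacle is the tacit compatibility of $\eta$-completion with rationalization and $p$-completion on the sphere spectra involved. The $p$-completion side is clean because both $\eta$-completion and $p$-completion are localizations and can be iterated in either order to yield $(p,\eta)$-completion. The rational compatibility, needed for a clean application of \aref{lem:etaplus}, requires verifying that $((\SS_k)^\wedge_\eta)_\Q\simeq((\SS_k)_\Q)^\wedge_\eta$ and the analogous equivariant statement; this reduces to commuting $\otimes\Q$ with the homotopy limit $\holim Z/\eta^n$ defining $\eta$-completion, valid under the mild finiteness properties enjoyed by our spectra. Once these compatibilities are in place, the remaining step is purely algebraic: assembling the vanishing of $K_n\otimes\Q$ and $K_n^\wedge_p$ for all $p$ into the vanishing of $K_n$ via the elementary structure theory of torsion abelian groups.
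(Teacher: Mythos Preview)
There is a genuine gap in the final algebraic assembly. You conclude that $K_n$ is torsion (from the rational input) and that $K_n^\wedge_p=0$ for every prime (from \aref{prop:BettiSubR}), and then assert that ``a torsion abelian group whose $p$-primary component vanishes for every prime must itself be zero.'' The quoted algebraic fact is correct, but $K_n^\wedge_p=0$ does \emph{not} say that the $p$-primary component of $K_n$ vanishes: the Pr\"ufer group $\Z/p^\infty$ is entirely $p$-primary yet has $(\Z/p^\infty)^\wedge_p=0$, and $\Q/\Z$ is a torsion group with $(\Q/\Z)^\wedge_p=0$ for all $p$ while being nonzero. So from ``torsion with all $p$-completions zero'' you cannot conclude $K_n=0$. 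What your argument actually establishes (after unwinding the $\lim/\lim^1$ sequence for $\pi_n$ of a $p$-completion) is only that $K_n$ is $p$-divisible for every $p$, which is much weaker.

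There is a secondary issue on the rational side: you need $((\SS_k)^\wedge_\eta)_\Q \simeq ((\SS_k)_\Q)^\wedge_\eta$ to feed \aref{prop:rational} into \aref{lem:etaplus} and deduce $K_n\otimes\Q=0$. Commuting $(-)_\Q$ past the homotopy limit defining $\eta$-completion is not automatic, and the paper only establishes the weaker statement that $X_\Q\to (X^\wedge_\eta)_\Q$ is an $\SS_k/\eta$-equivalence (so the two agree only after a further $\eta$-completion). The paper's route sidesteps both problems by $\eta$-completing the arithmetic fracture square at the spectrum level, mapping it to the corresponding equivariant square, and then invoking \aref{prop:BettiSubR}, \aref{prop:rational}, \aref{lem:etaplus} on the corners together with the five lemma on the resulting long exact sequences. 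That comparison of Mayer--Vietoris sequences is precisely what replaces your attempted group-theoretic reassembly.
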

\begin{proof}
By  \cite[Appendix A]{OrmsbyOstvaer:pi1} there is a homotopy cartesian square in $\SH_{k}$
$$
\xymatrix{
\SS_{k} \ar[r]\ar[d] & \prod_{p}(\SS_{k})^{\wedge}_{p} \ar[d] \\
(\SS_{k})_{\Q} \ar[r] & \prod_{p}((\SS_{k})^{\wedge}_{p})_{\Q}
}
$$  
where the products are over prime integers $p$. There is a similar equivariant arithmetic fracture square in $\SH_{C_2}$\footnote{The authors do not know a handy reference for this equivariant arithmetic fracture square, but standard techniques adapt to produce it.  For instance, the proof giving the motivic arithmetic fracture square in \cite[Appendix A]{OrmsbyOstvaer:pi1} works almost verbatim in the equivariant setting.}. 
Taking the $\eta$-completion of this square yields the homotopy cartesian square
$$
\xymatrix{
(\SS_{k})^{\wedge}_{\eta} \ar[r]\ar[d] & \prod_{p}(\SS_{k})^{\wedge}_{p,\eta} \ar[d] \\
[(\SS_{k})_{\Q}]^{\wedge}_{\eta} \ar[r] & [\prod_{p}((\SS_{k})^{\wedge}_{p})_{\Q}]^{\wedge}_{\eta}
}
$$ 
and similarly in $\SH_{C_2}$.
 Since $X_\Q\to (X^\wedge_{\eta})_\Q$ is a filtered colimit of $\SS_k/\eta$-equivalences, it is itself an $\SS_k/\eta$-equivalence. It follows that  
$[(\SS_{k})^{\wedge}_{p})_{\Q}]^{\wedge}_{\eta} \wkeq [(\SS_{k})^{\wedge}_{p,\eta})_{\Q}]^{\wedge}_{\eta}$ and similarly for the $C_2$-equivariant case.
The square obtained by applying $\RRe^{C_{2}}_{B,\phi}$ to the above square maps to the equivariant arithmetic fracture square.
We thus obtain a comparison diagram of associated  long exact sequences.
The proposition thus follows from \aref{prop:BettiSubR}, \aref{prop:rational}, \aref{lem:etaplus} and the five lemma.
\end{proof}

We now turn our attention to $(c_{L/k}^*)^{\wedge}_{\eta}$.  Write $\eta_L$ for the map $c^*_{L/k}(\eta):S^L\to S^0$.  
\begin{lemma}\label{lem:commeta}
Let $k$ be a  real closed field and let $L = k[i]$.  Then the canonical map
$c_{L/k}^*(\SS_{C_2})^{\wedge}_{\eta}\to (\SS_{k})^{\wedge}_{\eta_L}$ 
is also an equivalence. The canonical map $(\SS_{k})^{\wedge}_{\eta}\to 
(\SS_{k})^{\wedge}_{\eta,\eta_{L}}$ is an equivalence. In particular 
$(c_{L/k}^*)^{\wedge}_{\eta}((\SS_{C_2})^{\wedge}_{\eta}) \wkeq (\SS_{k})^{\wedge}_{\eta}$.
\end{lemma}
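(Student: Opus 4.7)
The three statements are linked: assertion (3) follows formally from (1) and (2), since
\[
  (c_{L/k}^*)^{\wedge}_{\eta}((\SS_{C_2})^{\wedge}_{\eta}) = (c_{L/k}^*(\SS_{C_2})^{\wedge}_{\eta})^{\wedge}_{\eta} \wkeq ((\SS_k)^{\wedge}_{\eta_L})^{\wedge}_{\eta} = (\SS_k)^{\wedge}_{\eta,\eta_L}
\]
by (1), and this equals $(\SS_k)^{\wedge}_{\eta}$ by (2). So I focus on proving (1) and (2).

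For (1), I would exploit that $c_{L/k}^*$ is strong symmetric monoidal and triangulated, sending $\eta$ to $\eta_L$ by definition. Applying $c_{L/k}^*$ to the cofiber sequence $S^{n\sigma}\xrightarrow{\eta^n}\SS_{C_2}\to \SS_{C_2}/\eta^n$ identifies $c_{L/k}^*(\SS_{C_2}/\eta^n)\wkeq \SS_k/\eta_L^n$ for every $n\geq 1$. The canonical map arises via universality of the homotopy limit defining $(\SS_k)^{\wedge}_{\eta_L}$. To show it is an equivalence, I verify the two properties characterizing $(\SS_k)^{\wedge}_{\eta_L}$ for $c_{L/k}^*(\SS_{C_2})^{\wedge}_{\eta}$: (a) the natural map $\SS_k \to c_{L/k}^*(\SS_{C_2})^{\wedge}_{\eta}$ is an $\eta_L$-equivalence; (b) $c_{L/k}^*(\SS_{C_2})^{\wedge}_{\eta}$ is itself $\eta_L$-complete. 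Property (a) is immediate from strong monoidality: the fiber of $\SS_{C_2}\to(\SS_{C_2})^{\wedge}_{\eta}$ is $\eta$-periodic, so smashing with $c_{L/k}^*(\SS_{C_2}/\eta) = \SS_k/\eta_L$ yields zero, making its image $\eta_L$-periodic. Given (a) and (b), universality of $\eta_L$-completion identifies $c_{L/k}^*(\SS_{C_2})^{\wedge}_{\eta}$ with $(\SS_k)^{\wedge}_{\eta_L}$ via the canonical map.

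For (2), I would show that $(\SS_k)^{\wedge}_{\eta}$ is already $\eta_L$-complete in $\SH_k$, equivalently that every $\eta_L$-periodic object is $\eta$-periodic over a real closed field. This should follow from careful analysis using the Galois cofiber sequence $\spec(L)_+ \to \SS_k \to S^L$ obtained by applying $c_{L/k}^*$ to $(C_2)_+\to S^0\to S^\sigma$, together with the specific relationship between the motivic Hopf map $\eta$ and the Galois Hopf map $\eta_L$ for $k$ real closed, where $GW(k)=\Z\oplus\Z$ and the endomorphism ring of the sphere is particularly well-behaved.

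\textbf{Main obstacle.} The crux is property (b) in the argument for (1) and the corresponding periodicity implication underlying (2): establishing that $\eta$-complete objects in $\SH_k$ are $\eta_L$-complete over a real closed field. Since $c_{L/k}^*$ is a left adjoint and does not a priori preserve the homotopy limits defining $\eta$-completion, neither statement follows formally; both require explicit analysis of the interplay between the motivic and Galois Hopf maps, likely via a tower-level comparison leveraging the real closed hypothesis.
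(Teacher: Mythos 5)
Your decomposition into claims (1), (2), (3), the reduction of (3) to (1) and (2), and your verification of property~(a)---that $\SS_k \to c_{L/k}^*((\SS_{C_2})^\wedge_\eta)$ is an $\eta_L$-equivalence---are all correct. However, as you yourself flag, property~(b) of claim (1) and the whole of claim (2) are left unproved, and these carry essentially all the content of the lemma. They do not reduce to a ``tower-level comparison,'' precisely because $c_{L/k}^*$ is a left adjoint and does not commute with the homotopy limit defining $(-)^\wedge_\eta$, so there is a genuine gap here.

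The paper closes the gap with a different set of tools that your proposal does not surface. It uses the arithmetic fracture square to split the problem into a $2$-complete piece and a $2$-inverted piece. For the $2$-complete piece, \aref{prop:nilnil} and \aref{lem:nil2nil} identify $(\SS_{C_2})^\wedge_{2,\eta}$ with the $\HH\ul{\Z/2}$-nilpotent completion, whose image under $c_{L/k}^*$ is $(\SS_k)^\wedge_{\HH\Z/2}\wkeq(\SS_k)^\wedge_2$; the crucial observation is then that $[S^L,\HH\Z/2]_k=0$, so $\eta_L$ acts by zero on every $\HH\Z/2$-module, and hence $\HH\Z/2$-modules (and therefore the nilpotent completion) are automatically $\eta_L$-complete. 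For the $2$-inverted piece, \aref{lem:eta12} identifies $(-)^\wedge_\eta$ with inverting the idempotent $\epsilon_+$, a smashing localization that $c_{L/k}^*$ \emph{does} preserve by strong monoidality. Claim (2) is handled by the same mechanism: comparing the fracture squares for $(-)^\wedge_\eta$ and $(-)^\wedge_{\eta,\eta_L}$, using $(\SS_k)^\wedge_{p,\eta}\wkeq(\SS_k)^\wedge_{\HH\Z/p}$ together with $[S^L,\HH\Z/p]_k=0$ for the $p$-complete corners, and $(X_\Q)^\wedge_\eta\wkeq X\wedge\HH\Q$ together with $[S^L,\HH\Q]_k=0$ for the rational corners. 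The vanishing $[S^L,\HH A]_k=0$ (for $A=\Z/p$ and $A=\Q$) is the key input missing from your plan; it is what makes $\eta_L$-completeness come for free on exactly the pieces the fracture square produces.
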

\begin{proof}
 We show that the first equivalence holds for the $2$-complete sphere and for spectra on which $2$ is invertible. A comparison of fracture squares then implies the result. 
 First note that $c_{L/k}^*((\SS_{C_2})^{\wedge}_{2,\eta}) = (\SS_k)^{\wedge}_{2}$ by \aref{lem:nil2nil} and \aref{prop:nilnil}. The map $\eta_L$ induces the $\HH\Z/2$-module map $\eta_L:S^{L}\wedge \HH\Z/2 \to \HH\Z/2$. The group of $\HH\Z/2$-module maps from $S^{L}\wedge \HH\Z/2$ to $\HH\Z/2$ is identified with the group $[S^L, \HH\Z/2]_k=0$. Thus $\eta_L$ acts by zero on any $\HH\Z/2$-module and so any $\HH\Z/2$-module is $\eta_L$-complete. It follows that $(\SS_k)^{\wedge}_{\HH\Z/2}$ is $\eta_L$ complete which by \aref{thm:C2ASSconv} implies that 
 $(\SS_k)^{\wedge}_{2}$ is $\eta_L$-complete. 
  Now if $2$ is invertible on $X$ then we have 
  $X^{\wedge}_{\eta} = X[\epsilon_+^{-1}]$ and $c^*_{L/k}(X)^{\wedge}_{\eta_L} = c_{L/k}^{*}(X)[(\epsilon_L)_+^{-1}]$ by \aref{lem:eta12}. Since $c_{L/k}^{*}(X[\epsilon_+^{-1}]) = c_{L/k}^{*}(X)[(\epsilon_L)_+^{-1}]$, we have established the first equivalence.

  For the second equivalence, we compare the applications of $(-)^{\wedge}_{\eta}$ and $(-)^{\wedge}_{\eta,\eta_{L}}$ to the arithmetic fracture square. 
  Since $(\SS_{k})^{\wedge}_{p,\eta}\wkeq (\SS_{k})^{\wedge}_{\HH\Z/p}$ and $[S^L, \HH\Z/p]_k=0$ we find that  $(\SS_{k})^{\wedge}_{p,\eta}$ is $\eta_L$-complete. Let $X$ be an object of $\SH_{k}$. By \aref{lem:eta12} we have $(X_{\Q})^{\wedge}_{\eta} \wkeq X_{\Q}^{+}$ and  
    by \cite[Theorem 16.2.13]{CD:motives} we have $X_{\Q}^{+}\wkeq X\wedge \HH\Q$. Since $[S^L, \HH\Q]_k=0$, we find that $(X_{\Q})^{\wedge}_{\eta}$ is $\eta_L$-complete. This implies the second equivalence.
\end{proof}
  
We now convert our analysis of $\RRe_{B,\phi}^{C_2}$ to $c^*_{L/k}$ using a limit argument which is  a modification of the one used in
\cite[Lemma 6.6]{Levine:comparison} to the case of real closed fields. 
  
\begin{proposition}\label{prop:csphere}
Let $k$ be a real closed field and set $L=k[i]$. Assume that $k$ has motivic Serre finiteness. Then for any $n\in \Z$, the maps
\begin{enumerate}
\item[(i)]  $(c^*_{L/k})^{\wedge}_{\eta}:[S^n,(\SS_{C_2})^{\wedge}_{\eta}]_{C_2}
\xrightarrow{\iso} [S^n, (c^*_{L/k})^{\wedge}_{\eta}((\SS_{C_2})^{\wedge}_{\eta})]_k$, and
\item[(ii)] $(c^*_{L/k})^{\wedge}_{\eta}:
[C_{2\,+}\wedge S^n, (\SS_{C_2})^{\wedge}_{\eta}]_{C_2}\xrightarrow{\iso} 
[\spec(L)_+\wedge S^n, (c^*_{L/k})^{\wedge}_{\eta}((\SS_{C_2})^{\wedge}_{\eta})]_k$
\end{enumerate}
are isomorphisms.
For any prime $p$, the maps 
$[S^n,(\SS_{C_2})^{\wedge}_{p,\eta}]_{C_2}\xrightarrow{\iso} [S^n, (c^*_{L/k})^{\wedge}_{p,\eta}((\SS_{C_2})^{\wedge}_{p,\eta})]_k$, and
 $[C_{2\,+}\wedge S^n, (\SS_{C_2})^{\wedge}_{p,\eta}]_{C_2}\xrightarrow{\iso} 
[\spec(L)_+\wedge S^n, (c^*_{L/k})^{\wedge}_{p,\eta}((\SS_{C_2})^{\wedge}_{p,\eta})]_k$
for $(p,\eta)$-completed spheres are always isomorphisms. For $p=2$, the maps 
$[S^n,(\SS_{C_2})^{\wedge}_{2}]_{C_2}\xrightarrow{\iso} [S^n, (\SS_{k})^{\wedge}_{2}]_k$, and
 $[C_{2\,+}\wedge S^n, (\SS_{C_2})^{\wedge}_{2}]_{C_2}\xrightarrow{\iso} 
[\spec(L)_+\wedge S^n, (\SS_{k})^{\wedge}_{2})]_k$ are isomorphisms.
\end{proposition}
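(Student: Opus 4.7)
The strategy is to convert the Betti-realization isomorphism results of \aref{prop:BettiSubR} and \aref{prop:Bettiintegral} into statements about $c^*_{L/k}$, using the identity $\RRe^{C_2}_{B,\phi}\circ c^*_{L/k}=\id_{\SH_{C_2}}$ available whenever $k$ admits a real embedding $\phi\colon k\hookrightarrow\R$, and then to descend to a general real closed $k$ by a filtered colimit argument modeled on \cite[Lemma 6.6]{Levine:comparison}.

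First I would dispose of the case in which $k$ itself admits a real embedding $\phi\colon k\hookrightarrow\R$. Since $\RRe^{C_2}_{B,\phi}\circ c^*_{L/k}=\id_{\SH_{C_2}}$ on the nose, the induced composition on Hom groups is the identity, so $c^*_{L/k}$ is automatically injective on every Hom group in question. The maps induced by $\RRe^{C_2}_{B,\phi}$ are isomorphisms by \aref{prop:BettiSubR} in the $(p,\eta)$-complete and $2$-complete cases and by \aref{prop:Bettiintegral} in the $\eta$-complete case assuming motivic Serre finiteness, so $c^*_{L/k}$ is also surjective, settling the proposition when $k$ admits a real embedding.

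For a general real closed $k$, I would express $k$ as a filtered colimit $k=\colim_\alpha k_\alpha$ of subfields to which the preceding step applies, and factor $c^*_{L/k}$ through the family $c^*_{L_\alpha/k_\alpha}\colon\SH_{C_2}\to\SH_{k_\alpha}$ with $L_\alpha=k_\alpha[i]$. The compact quotient spectra $\SS_{k_\alpha}/(p^m,\eta^m)$ and $\SS_{k_\alpha}/\eta^m$ base-change correctly, and standard continuity of $\SH_{(-)}$ along filtered colimits of fields then gives
\[
  [Y_k,\,\SS_k/(p^m,\eta^m)]_k \;=\; \colim_\alpha [Y_\alpha,\,\SS_{k_\alpha}/(p^m,\eta^m)]_{k_\alpha}
\]
for $Y$ compact (analogously for $\eta$-quotients alone), where $Y_\alpha$ is a compact model of $Y$ over $k_\alpha$ and $Y_k$ its pullback to $\SH_k$. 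The Milnor $\lim^1$-sequences computing Hom groups into the completions then assemble the $\alpha$-wise isomorphisms from Step 1 into the desired isomorphism over $k$.

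The principal difficulty is arranging the colimit system so that it both is cofinal in the subfields of $k$ and consists of real closed fields admitting real embeddings: because $k$ may be non-archimedean while any real closed subfield of $k$ that embeds in $\R$ must be archimedean, a naive system (e.g.\ real closures of finitely generated subfields of $k$) need not suffice. I would navigate this by working directly with the motivic Adams spectral sequence, whose $E_1$-page depends on $k$ only through the motivic cohomology of $\spec k$ and $\spec L$ and hence is determined by $k$ being real closed. The $E_1$-isomorphism induced by $c^*_{L/k}$ is established via Betti realization and \aref{thm:SV} for a real-embeddable base in the proof of \aref{prop:BettiSubR}, and descends through the continuity argument above to any real closed base; strong convergence (\aref{thm:MASSconv} and \aref{thm:C2ASSconv}) then promotes this to the Hom-group isomorphism in the $(p,\eta)$-complete and $2$-complete cases. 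The $\eta$-complete case under motivic Serre finiteness follows by the arithmetic fracture square argument already used in the proof of \aref{prop:Bettiintegral}. The most delicate point is the interchange of $\lim_m$ (from the Milnor sequence) with $\colim_\alpha$ (from continuity), which is most cleanly handled termwise at the $E_1$-level of the Adams spectral sequences rather than directly on the completion towers.
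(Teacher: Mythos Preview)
Your overall strategy coincides with the paper's: treat the case where $k$ admits a real embedding via $\RRe^{C_2}_{B,\phi}\circ c^*_{L/k}=\id$ together with \aref{prop:BettiSubR} and \aref{prop:Bettiintegral}, and then pass to a general real closed $k$ by a filtered-colimit argument. One omission in your first step is \aref{lem:commeta}, which you need in order to identify $(c^*_{L/k})^\wedge_\eta\bigl((\SS_{C_2})^\wedge_\eta\bigr)$ with $(\SS_k)^\wedge_\eta$ so that the Betti-realization statements actually apply to the target groups appearing in the proposition.

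Where you diverge from the paper is in handling the ``principal difficulty''. The paper does not route around it via the Adams spectral sequence; it resolves it with a direct field-theoretic argument. One writes $L=k[i]=\bigcup_\alpha L_\alpha$ as a union of algebraically closed subfields of finite transcendence degree over $\Q$, sets $k_\alpha=L_\alpha\cap k$, observes via Artin--Schreier that each $k_\alpha$ is real closed, and then argues---by choosing a positive transcendence basis and invoking \cite[Proposition~VIII.2.16]{Lam:intro}---that each $k_\alpha$ admits an embedding into $\R$. Essentially smooth base change \cite[Lemma~A.7]{Hoyois:cob2mot} then exhibits the Hom groups over $k$ as filtered colimits of the corresponding groups over the $k_\alpha$, and since each of these is already identified with the fixed $C_2$-equivariant group by the first step, the colimit system is \emph{constant}. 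This in particular renders moot the $\lim$/$\colim$ interchange you worry about. Your Adams-spectral-sequence workaround is a plausible alternative, but as written it still leans on ``the continuity argument above'' to transport the $E_1$-isomorphism to a general real closed base, which is precisely the step you were trying to avoid; to make that route self-contained you would have to argue directly (e.g.\ via rigidity for mod-$p$ motivic cohomology of real closed fields) that the map of $E_1$-pages induced by $c^*_{L/k}$ is an isomorphism over every real closed $k$, without appeal to Betti realization.
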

\begin{proof}
If there is an embedding $\phi:k\subseteq \R$, then this is a direct consequence of \aref{prop:BettiSubR}, \aref{prop:Bettiintegral}, and \aref{lem:commeta} and the relation 
$\RRe_{B,\phi}^{C_2}\circ
c_{L/k}^*\iso \id$. We treat the case of the $\eta$-completed spheres below, the case of $(p,\eta)$-completion holds verbatim.

As $k$ is real closed, $L$ is algebraically closed.  We may express
$L$ as the union $\bigcup_{\alpha\in A} L_\alpha$ of algebraically closed
subfields $L_\alpha\subset L$ of finite transcendence degree over
$\QQ$ indexed by a well-ordered set $A$.  Consider the fields
$k_\alpha = L_\alpha\cap k$.  We claim that the $k_\alpha$ are
isomorphic to real closed subfields of $\RR$.  If this is the case,
then
\[
  \colim_\alpha [S^n,(c^*_{L_\alpha/k_\alpha})^{\wedge}_{\eta}((\SS_{C_2})^{\wedge}_{\eta})]_{k_\alpha} \,\,\,\text{and}\,\,\, \colim_\alpha [\spec(L_{\alpha})_+\wedge S^n,(c^*_{L_{\alpha}/k_{\alpha}})^{\wedge}_{\eta}((\SS_{C_2})^{\wedge}_{\eta})]_{k_\alpha}
\]
are colimits of abelian groups with constant values
$[S^n, (\SS_{C_2})^{\wedge}_{\eta}]_{C_2}$ and $[C_{2\,+}\wedge S^n, (\SS_{C_2})^{\wedge}_{\eta}]_{C_2}$ respectively, by the observation in the first paragraph.  Since it is
obvious that $k = \bigcup_\alpha k_\alpha$, 
using essentially smooth base change 
\cite[Lemma A.7]{Hoyois:cob2mot} we conclude  
that these colimits are respectively isomorphic to
$[S^n,(c^*_{L/k})^{\wedge}_{\eta}((\SS_{C_2})^{\wedge}_{\eta})]_k$ and $[\spec(L)_+\wedge S^n,(c^*_{L/k})^{\wedge}_{\eta}((\SS_{C_2})^{\wedge}_{\eta})]_k$.  Thus we may now conclude that the maps
$[S^n, (\SS_{C_2})^{\wedge}_{\eta}]_{C_2}\to [S^n, (c^*_{L/k})^{\wedge}_{\eta}((\SS_{C_2})^{\wedge}_{\eta})]_k$  and $[C_{2\,+}\wedge S^n,(\SS_{C_2})^{\wedge}_{\eta}]_{C_2}\to [\spec(L)_+\wedge S^n, (c^*_{L/k})^{\wedge}_{\eta}((\SS_{C_2})^{\wedge}_{\eta})]_k$ are isomorphisms for
all real closed fields.

It remains to verify the claim that each $k_\alpha$ is isomorphic to a
real closed subfield of $\RR$.  Since $L_\alpha$ is algebraically
closed and $[L_\alpha:k_\alpha]=2$, the Artin-Schreier theorem implies
that $k_\alpha$ is real closed.  Fix $k_\alpha$ and choose a 
transcendence basis $x_1,\ldots,x_n$ of $k_\alpha$ over $\QQ$ in which
each $x_i$ is positive in $k_\alpha$.  By
sending each $x_i$ to a positive transcendental real number, we
produce an order embedding of $\QQ(x_1,\ldots,x_n)$ into $\RR$.  Since
$k_\alpha/\QQ(x_1,\ldots,x_n)$ is a union of finite extensions of ordered
fields, \cite[Proposition VIII.2.16]{Lam:intro} implies that there is
an embedding $k_\alpha\hookrightarrow \RR$, as desired.
\end{proof}

We are now ready to prove our main theorem.  Recall that a \textit{localizing subcategory} $\mcal{E}$ of a triangulated category $\mcal{T}$ is a full triangulated subcategory, containing all direct summands of its objects and closed under arbitrary coproducts.  
 
\begin{theorem}\label{thm:textmain}
Let $k$ be a real closed field and let $L=k[i]$ be its algebraic closure. Assume that $k$ has motivic Serre finiteness.
Then
$$
(c_{L/k}^*)^{\wedge}_{\eta}:(\SH_{C_2})^{\wedge}_{\eta}\to (\SH_k)^{\wedge}_{\eta}
$$
is a full and faithful embedding. 
\end{theorem}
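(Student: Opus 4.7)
\emph{Proof plan.} The strategy is to upgrade the Hom-set isomorphisms of \autoref{prop:csphere} to full faithfulness via a standard density argument. Write $F := (c^*_{L/k})^{\wedge}_{\eta}$, which is a triangulated, coproduct-preserving functor ($c^*_{L/k}$ is a left adjoint by the construction recalled in \autoref{sec:constructions}, and $\eta$-completion commutes with coproducts in the local category). The category $(\SH_{C_2})^{\wedge}_{\eta}$ is generated, as a localizing subcategory of itself, by the shifted $\eta$-complete orbits $S^n\wedge(\SS_{C_2})^{\wedge}_{\eta}$ and $S^n\wedge C_{2+}\wedge(\SS_{C_2})^{\wedge}_{\eta}$, and by \autoref{lem:commeta} the functor $F$ sends these to the corresponding generators of $(\SH_k)^{\wedge}_{\eta}$, namely $S^n\wedge(\SS_k)^{\wedge}_{\eta}$ and $S^n\wedge\spec(L)_+\wedge(\SS_k)^{\wedge}_{\eta}$.

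The key step is to check that $F$ induces bijections
\begin{equation*}
[X,Y]_{C_2} \longrightarrow [F(X), F(Y)]_k
\end{equation*}
on all four types of Hom set between these generators: (a) both arguments of sphere type is \autoref{prop:csphere}(i); (b) $C_{2+}$-source, sphere-target is \autoref{prop:csphere}(ii); (c) sphere source, $C_{2+}$-target reduces to (b) via the Spanier-Whitehead self-dualities $D(C_{2+})\simeq C_{2+}$ in $\SH_{C_2}$ and $D(\spec(L)_+)\simeq \spec(L)_+$ in $\SH_k$; and (d) both arguments of $C_{2+}$-type splits into summands of type (b) or (c) via the matching decompositions $C_{2+}\wedge C_{2+}\simeq C_{2+}\vee C_{2+}$ (arising from the $C_2$-orbit decomposition $C_2\times C_2\simeq C_2\sqcup C_2$ under diagonal action) and $\spec(L)_+\wedge\spec(L)_+\simeq \spec(L)_+\vee\spec(L)_+$ (arising from the Galois isomorphism $L\otimes_k L\cong L\times L$). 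Once the bijection holds for all pairs of generators, the standard two-step localizing-subcategory argument --- first fixing $Y$ a generator and varying $X$ (using that $[-,Y]_{C_2}$ sends coproducts to products and triangles to long exact sequences), then fixing $X$ a generator and varying $Y$ (using compactness of generators and their $F$-images) --- extends the bijection to arbitrary pairs of objects in $(\SH_{C_2})^{\wedge}_{\eta}$, establishing full faithfulness.

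The main obstacle is compatibility bookkeeping rather than conceptual difficulty. One must verify $F(C_{2+}\wedge Y)\simeq \spec(L)_+\wedge F(Y)$, which follows from strong monoidality of $c^*_{L/k}$, the identity $c^*_{L/k}(C_{2+}) = \spec(L)_+$, and the fact that smashing with the dualizable object $\spec(L)_+$ preserves $\eta$-completeness; one must confirm that the splittings in case (d) really do match under $c^*_{L/k}$, which is exactly where the Galois hypothesis with group $C_2$ is used on the motivic side; and one must make sure the density argument is valid in the $\eta$-completed categories (so that $\eta$-completed coproducts of objects in the localizing subcategory stay in it). Granted these, the density argument runs mechanically atop \autoref{prop:csphere}.
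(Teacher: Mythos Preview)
Your proposal is correct and follows essentially the same strategy as the paper: reduce to Hom-isomorphisms between generators via \autoref{prop:csphere} together with self-duality of $C_{2+}$ and $\spec(L)_+$, then run a density argument. The paper handles your cases (c) and (d) simultaneously via the Wirthm\"uller isomorphism $\omega:C_{2+}\wedge X\simeq F(C_{2+},X)$ (and its motivic analogue $c^*_{L/k}(\omega)$, deduced from strong monoidality and \cite[Proposition 3.12]{HuFauskMay:isos}); your explicit use of the splittings $C_{2+}\wedge C_{2+}\simeq C_{2+}\vee C_{2+}$ and $\spec(L)_+\wedge\spec(L)_+\simeq\spec(L)_+\vee\spec(L)_+$ for case (d) is a slightly different but equally valid route --- arguably cleaner, since the paper's Wirthm\"uller reduction of (d) still leaves one with $[C_{2+}\wedge C_{2+}\wedge S^n,-]$ and tacitly needs the same splitting.

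One small correction to your density argument: the order of the two steps must be reversed. As you wrote it, step one (fix $Y$ a generator, vary $X$) yields the isomorphism for arbitrary $X$ and generator $Y$; step two (fix $X$ a generator, vary $Y$) yields it for generator $X$ and arbitrary $Y$ --- neither conclusion covers arbitrary pairs, and you cannot then fix $X$ \emph{arbitrary} and vary $Y$ because that step needs $X$ compact. The correct order (as the paper does with $\mathcal{C}$ then $\mathcal{L}_X$) is: first fix the source to be a compact generator and vary the target over all objects, using compactness of the source and its $F$-image so that $[G,-]$ commutes with coproducts; then for an \emph{arbitrary} target $X$, the subcategory of sources $Y$ giving an isomorphism is localizing by contravariance of $[-,X]$ and contains the generators by the first step, hence equals everything.
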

\begin{proof}
 Consider the subcategory $\mcal{C}\subseteq (\SH_{C_2})^{\wedge}_{\eta}$ whose objects are $\eta$-complete $C_2$-equivariant spectra
$X$ such that $(c^*_{L/k})^{\wedge}_{\eta}:[S^{n}, X]_{C_2}\to [S^{n}, c^*_{L/k}(X)^{\wedge}_{\eta}]_{k}$ 
and $(c^*_{L/k})^{\wedge}_{\eta}:[C_{2\,+}\wedge S^{n}, X]_{C_2}\to [C_{2\,+}\wedge S^{n}, c^*_{L/k}(X)^{\wedge}_{\eta}]_{k}$
are isomorphisms for all $n$. This is a localizing subcategory and by \aref{prop:csphere} it contains $(\SS_{C_2})^{\wedge}_{\eta}$ and we argue below that 
$C_{2\,+}\wedge (\SS_{C_2})^{\wedge}_{\eta}$ is in $\mcal{C}$ as well. This implies that $\mcal{C}=(\SH_{C_2})^{\wedge}_{\eta}$, as this is the smallest localizing subcategory containing 
$\{(\SS_{C_{2}})^{\wedge}_{\eta}, C_{2\,+}\wedge (\SS_{C_{2}})^{\wedge}_{\eta}\}$.

Now we show that
 $C_{2\,+}\wedge(\SS_{C_2})^{\wedge}_{\eta}$ is also in $\mcal{C}$. 
Since $c^{*}_{L/k}$ is strong symmetric monoidal and $C_{2\,+}$ is dualizable, \cite[Proposition 3.12]{HuFauskMay:isos} implies that for any $C_{2}$-spectrum $X$, 
the natural map
$c^*_{L/k}(F(C_{2\,+},X))\to F(\spec(L)_{+}, c^*_{L/k}(X))$ 
is an isomorphism in $\SH_{k}$, where $F(-,-)$ denotes the function spectrum in the corresponding homotopy category.
Now $C_{2\,+}$ is self dual, \emph{i.e.}~there is an isomorphism $C_{2\,+}\iso D(C_{2+})$ in $\SH_{C_{2}}$ where $D(-)= F(-,\SS_{C_2})$
denotes the Spanier-Whitehead dual. As with any dualizable object, there is a natural isomorphism $\nu:D(C_{2\,+})\wedge X \iso F(C_{2\,+}, X)$. Combining these isomorphisms yields the isomorphism
$\omega:C_{2\,+}\wedge X \iso F(C_{2\,+}, X)$ in $\SH_{C_2}$, which is a simple case of the Wirthm\"{u}ller isomorphism, and $c^*_{L/k}(\omega)$ induces an isomorphism $\spec(L)_{+}\wedge c^*_{L/k}(X)^{\wedge}_{\eta} \iso F(\spec(L)_+,c^*_{L/k}(X)^{\wedge}_{\eta})$ in $\SH_{k}$.
This isomorphism together with \aref{prop:csphere} now 
implies that the maps
\begin{enumerate}
 \item[(i)] $(c^*_{L/k})^{\wedge}_{\eta}:[S^{n}, C_{2\,+}\wedge (\SS_{C_2})^{\wedge}_{\eta}]_{C_2}\to 
[S^{n}, \spec(L)_{+}\wedge c^*_{L/k}((\SS_{C_2})^{\wedge}_{\eta})^{\wedge}_{\eta}]_{k}$, and 
 \item[(ii)] $(c^*_{L/k})^{\wedge}_{\eta}:[C_{2\,+}\wedge S^{n}, C_{2\,+}\wedge (\SS_{C_{2}})^{\wedge}_{\eta}]_{C_2}\to [\spec(L)_+\wedge S^{n}, \spec(L)_{+}\wedge c^*_{L/k}((\SS_{C_2})^{\wedge}_{\eta})^{\wedge}_{\eta}]_{k}$
\end{enumerate}
 are isomorphisms for any $n\in \Z$.

Now, for any $\eta$-complete $C_2$-spectrum $X$, let $\mathcal{L}_X$ denote the
full subcategory of $\eta$-complete $C_2$-spectra $Y$ such that 
$[S^n \wedge Y,X]_{C_2} \to 
[ S^n\wedge c^*_{L/k}(Y)^{\wedge}_{\eta}, c^*_{L/k}( X)^{\wedge}_{\eta}]_{k}$ is an isomorphism for all
$n\in \ZZ$.  It is clear that $\mathcal{L}_X$ is a localizing
subcategory of $\SH_{C_2}$.  We have seen that $\mcal{L}_{X}$ contains both $(\SS_{C_2})^{\wedge}_{\eta}$ and $C_{2\,+}\wedge (\SS_{C_{2}})^{\wedge}_{\eta}$. Therefore $\mcal{L}_{X}=(\SH_{C_2})^{\wedge}_{\eta}$.
Since $X$ was arbitrary, we have proved that $(c^*_{L/k})^{\wedge}_{\eta}$ is full and faithful.

\end{proof}

Indepedent of whether $k$ has motivic Serre finiteness, the argument in the previous theorem yields the embedding theorem for the $(p,\eta)$-complete homotopy categories. 

\begin{theorem}\label{thm:textmain2}
Let $k$ be a real closed field and let $L=k[i]$ be its algebraic closure. 
Then for any prime $p$
$$
c_{L/k}^*:(\SH_{C_2})^{\wedge}_{p,\eta}\to (\SH_k)^{\wedge}_{p,\eta}
$$
is a full and faithful embedding. For $p=2$, $c_{L/k}^*:(\SH_{C_2})^{\wedge}_{2}\to (\SH_k)^{\wedge}_{2}$ is full and faithful.
\end{theorem}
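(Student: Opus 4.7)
The plan is to repeat the argument of \aref{thm:textmain} mutatis mutandis, replacing $\eta$-completion with $(p,\eta)$-completion throughout. The essential input is the unconditional portion of \aref{prop:csphere}, which asserts that $(c^*_{L/k})^{\wedge}_{p,\eta}$ induces isomorphisms on maps out of $S^n$ and $C_{2\,+}\wedge S^n$ into $(\SS_{C_2})^{\wedge}_{p,\eta}$ with no hypothesis on motivic Serre finiteness. Unlike the $\eta$-completed case, no rational input is required, because \aref{prop:BettiSubR} (via the comparison of motivic and equivariant Adams spectral sequences) already handles the $(p,\eta)$-complete sphere directly without appealing to an arithmetic fracture square.

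First, I consider the full subcategory $\mcal{C}\subseteq (\SH_{C_2})^{\wedge}_{p,\eta}$ consisting of those $(p,\eta)$-complete $X$ for which both $(c^*_{L/k})^{\wedge}_{p,\eta}:[S^n,X]_{C_2}\to [S^n,(c^*_{L/k})^{\wedge}_{p,\eta}(X)]_k$ and $(c^*_{L/k})^{\wedge}_{p,\eta}:[C_{2\,+}\wedge S^n,X]_{C_2}\to [\spec(L)_+\wedge S^n,(c^*_{L/k})^{\wedge}_{p,\eta}(X)]_k$ are isomorphisms for all $n\in\Z$. This is a localizing subcategory and, by \aref{prop:csphere}, it contains $(\SS_{C_2})^{\wedge}_{p,\eta}$. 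To show $C_{2\,+}\wedge (\SS_{C_2})^{\wedge}_{p,\eta}$ also lies in $\mcal{C}$, I invoke Spanier-Whitehead self-duality of $C_{2\,+}$ together with the strong symmetric monoidality of $c^*_{L/k}$ and \cite[Proposition 3.12]{HuFauskMay:isos}, exactly as in the proof of \aref{thm:textmain}, to obtain a natural isomorphism $c^*_{L/k}(C_{2\,+}\wedge X)\iso \spec(L)_+\wedge c^*_{L/k}(X)$ in $\SH_k$. Combined with \aref{prop:csphere} applied to $(\SS_{C_2})^{\wedge}_{p,\eta}$, this puts $C_{2\,+}\wedge (\SS_{C_2})^{\wedge}_{p,\eta}$ into $\mcal{C}$. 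Since $\{(\SS_{C_2})^{\wedge}_{p,\eta},\, C_{2\,+}\wedge (\SS_{C_2})^{\wedge}_{p,\eta}\}$ generates $(\SH_{C_2})^{\wedge}_{p,\eta}$ as a localizing subcategory (the generators of $\SH_{C_2}$ remaining generators after localization), we conclude $\mcal{C}=(\SH_{C_2})^{\wedge}_{p,\eta}$.

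For the fullness step, I fix an arbitrary $(p,\eta)$-complete $X$ and consider the full subcategory $\mcal{L}_X\subseteq (\SH_{C_2})^{\wedge}_{p,\eta}$ of those $Y$ such that $[S^n\wedge Y,X]_{C_2}\to [S^n\wedge c^*_{L/k}(Y)^{\wedge}_{p,\eta},c^*_{L/k}(X)^{\wedge}_{p,\eta}]_k$ is an isomorphism for every $n\in\Z$. This $\mcal{L}_X$ is again localizing, and by the previous step contains both generators, so $\mcal{L}_X=(\SH_{C_2})^{\wedge}_{p,\eta}$. For the case $p=2$, the final identification of $(2,\eta)$-completion with $2$-completion follows from \aref{thm:MASSconv} on the motivic side and from \aref{lem:nil2nil} together with \aref{thm:C2ASSconv} on the equivariant side. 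The main subtlety — which is already resolved by \aref{prop:csphere} rather than arising here — was the spectral sequence comparison needed to establish isomorphisms at the level of sphere spectra; once that is in hand, the argument is a straightforward density reduction via localizing subcategories.
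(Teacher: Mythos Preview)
Your proposal is correct and matches the paper's approach exactly: the paper's own proof is the single sentence ``Independent of whether $k$ has motivic Serre finiteness, the argument in the previous theorem yields the embedding theorem for the $(p,\eta)$-complete homotopy categories,'' and you have faithfully spelled out how that argument transfers, invoking the unconditional part of \aref{prop:csphere} in place of the Serre-finiteness-dependent input. The localizing-subcategory density argument, the Wirthm\"uller/self-duality step for $C_{2\,+}$, and the $p=2$ identification of $(2,\eta)$-completion with $2$-completion are all exactly as in \aref{thm:textmain} and its supporting lemmas.
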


As mentioned in the introduction, our main theorem has the following
corollary on Picard-graded stable homotopy groups.

\begin{corollary} \label{cor:groups}
Suppose $k$ is real closed and $L=k[i]$ and let $S^L$ denote the
unreduced suspension of $\spec(L)$.  Then for all $m,n\in \ZZ$ and any $(p,\eta)$-complete $C_2$-spectrum $X$, the functor
$c_{L/k}^*$ induces an isomorphism of Picard-graded stable homotopy
groups
\[
  \pi_{m+n\sigma} (X)\xrightarrow{\iso} \pi_{m+nL}(c_{L/k}^*(X)^{\wedge}_{\eta}).
\]
If $k$ has motivic Serre finiteness, then it is an isomorphism for any $C_2$-spectrum $X$. In particular, in this case
\[
  \pi_{m+n\sigma}((\SS_{C_2})^{\wedge}_{\eta}))\xrightarrow{\iso} \pi_{m+nL}( (\SS_k)^{\wedge}_{\eta}).
\]
\end{corollary}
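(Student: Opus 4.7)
The plan is to reduce the statement to the full and faithfulness theorems \aref{thm:textmain} and \aref{thm:textmain2} by identifying what $c^*_{L/k}$ does to the representation spheres. The heart of the argument is the identification $c^*_{L/k}(S^{\sigma}) \simeq S^L$ in $\SH_k$, after which the corollary is a direct consequence of full and faithfulness.

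First I would establish that $c^*_{L/k}(S^{m+n\sigma}) \simeq S^{m+nL}$. The unreduced suspension description $S^\sigma \simeq C_2 \ast \mathrm{pt}$ yields a cofiber sequence $C_{2\,+} \to S^0 \to S^\sigma$ of pointed $C_2$-spectra, where the first map is the collapse. Since $c^*_{L/k}$ is triangulated and strong symmetric monoidal, and since by construction it sends $(G/e)_+ = C_{2\,+}$ to $\spec(L)_+$ (this is exactly the value of the underlying orbit-category functor $c_{L/k}:\Or_G\to \Sm/k$ on $G/e$), we obtain a cofiber sequence $\spec(L)_+\to S^0\to c^*_{L/k}(S^\sigma)$ in $\SH_k$. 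The cofiber of the collapse $\spec(L)_+\to S^0$ is by definition the suspension spectrum of the unreduced suspension $S^L$ of $\spec(L)$, so $c^*_{L/k}(S^\sigma)\simeq S^L$. The strong monoidal structure then gives $c^*_{L/k}(S^{m+n\sigma}) \simeq S^m\wedge (S^L)^{\wedge n} = S^{m+nL}$ for all $m,n\in\Z$.

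Next I would apply full and faithfulness. Fix a $(p,\eta)$-complete $C_2$-spectrum $X$. Since $X$ is $\eta$-local, any map from $S^{m+n\sigma}$ to $X$ factors through the $\eta$-completion, giving $[S^{m+n\sigma},X]_{C_2} = [(S^{m+n\sigma})^{\wedge}_{\eta},X]_{C_2}$. Applying \aref{thm:textmain2} (the $(p,\eta)$-complete full and faithfulness), we get
\[
  [(S^{m+n\sigma})^{\wedge}_{\eta},X]_{C_2} \xrightarrow{\iso} [c^*_{L/k}(S^{m+n\sigma})^{\wedge}_{\eta}, c^*_{L/k}(X)^{\wedge}_{\eta}]_{k} = [(S^{m+nL})^{\wedge}_{\eta}, c^*_{L/k}(X)^{\wedge}_{\eta}]_{k},
\]
where we used the first step. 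Since $c^*_{L/k}(X)^{\wedge}_{\eta}$ is $\eta$-complete, the right-hand side is $[S^{m+nL},c^*_{L/k}(X)^{\wedge}_{\eta}]_k = \pi_{m+nL}(c^*_{L/k}(X)^{\wedge}_{\eta})$, yielding the desired isomorphism. If $k$ has motivic Serre finiteness, the same argument using \aref{thm:textmain} in place of \aref{thm:textmain2} works for arbitrary $C_2$-spectra $X$ (without the $(p,\eta)$-completeness assumption), and specializing to $X=(\SS_{C_2})^{\wedge}_{\eta}$ gives the last displayed isomorphism in the statement.

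The main (and only) substantive step is the identification $c^*_{L/k}(S^\sigma)\simeq S^L$; everything else is a formal consequence of full and faithfulness together with invertibility of $S^L$ in $\SH_k$ (by Hu's theorem). Since $S^L$ is not weakly equivalent to $\A^1\smallsetminus\{0\}$, this is precisely the exotic Picard-graded comparison that Dugger's observation mentioned in the introduction precludes in the standard motivic bigrading.
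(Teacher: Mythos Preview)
Your proposal is correct and follows exactly the approach the paper intends: the corollary is stated without proof as an immediate consequence of \aref{thm:textmain} and \aref{thm:textmain2} together with the identification $c^*_{L/k}(S^{m+n\sigma})\simeq S^{m+nL}$ already asserted in the introduction, and your cofiber-sequence derivation of $c^*_{L/k}(S^\sigma)\simeq S^L$ from $C_{2\,+}\to S^0\to S^\sigma$ is the standard one. One minor bookkeeping point: when invoking \aref{thm:textmain2} in the $(p,\eta)$-complete case you should be completing at $(p,\eta)$ rather than at $\eta$ alone, so the target naturally appears as $c^*_{L/k}(X)^\wedge_{p,\eta}$ rather than $c^*_{L/k}(X)^\wedge_{\eta}$, but this imprecision is already present in the corollary's own statement.
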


We can also deduce a 2-complete version of Morel's conjecture on $\pi_1 \SS_k$ for $k$ a real
closed field.  Recall that for a general field $k$, Morel's conjecture
states that there is a short exact sequence
\[
  0\to K^M_2(k)/24\to \pi_1 \SS_k\to K^M_1(k)/2\oplus \ZZ/2\to 0
\]
in which the map $\pi_1 \SS_k\to K^M_1(k)/2\oplus \ZZ/2$ is induced by
the unit map $\SS_k\to \mathrm{KO}$ to Hermitian $K$-theory and
$K^M_2(k)/24\to \pi_1\SS_k$ takes symbols $[a,b]$ to $[a,b]\nu$, $\nu$
the motivic quaternionic Hopf map.

\begin{corollary} \label{cor:Morel}
If $k$ is real closed, then $\pi_1(\SS_k)^{\wedge}_{2}$ sits in the short exact
sequence
\[
    0\to K^M_2(k)/8\to \pi_1 (\SS_k)^{\wedge}_{2}\to K^M_1(k)/2\oplus \ZZ/2\to 0.
\]
\end{corollary}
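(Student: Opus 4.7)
The plan is to use \aref{thm:textmain2} to reduce the computation of $\pi_1(\SS_k)^{\wedge}_2$ to an already understood $C_2$-equivariant quantity, and then match terms with Morel's short exact sequence. Applying \aref{thm:textmain2} at $p=2$ yields an isomorphism $\pi_1(\SS_{C_2})^{\wedge}_2\iso \pi_1(\SS_k)^{\wedge}_2$; combined with Araki--Iriye's calculation (invoked in \aref{cor:comp}), this identifies $\pi_1(\SS_k)^{\wedge}_2$ abstractly with $(\ZZ/2)^3$.

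For $k$ real closed, the next step is to compute the outer terms of the proposed sequence. One has $K_1^M(k)/2=k^\times/(k^\times)^2 \iso \ZZ/2$ generated by the class of $-1$, so $K_1^M(k)/2 \oplus \ZZ/2 \iso (\ZZ/2)^2$. By the Milnor--Bass--Tate structure theorem for Milnor $K$-theory of real closed fields, $K_2^M(k) \iso \ZZ/2\{-1,-1\} \oplus D$ with $D$ uniquely divisible, so $K_2^M(k)/8 \iso \ZZ/2$. The orders of the outer terms multiply to $8=|\pi_1(\SS_k)^{\wedge}_2|$, so exactness will follow once I check that the sequence is a chain complex, that the left-hand map is injective, and that the right-hand map is surjective.

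The two maps are Morel's standard constructions. The $2$-completion of the unit $\SS_k\to \mathrm{KO}$ to Hermitian $K$-theory induces the right-hand map, while the symbol map $[a,b]\mapsto [a,b]\nu$ factors through $K_2^M(k)/8$ after $2$-completion (since $3$ is invertible). That the composite vanishes is a general fact about $\nu$ dying in the unit map to $\mathrm{KO}$. Surjectivity of the right-hand map is checked by exhibiting preimages: the motivic $\eta\in\pi_1\SS_k$ hits the generator of $K_1^M(k)/2$, and the image under $c_{L/k}^*$ of the $C_2$-equivariant class corresponding to the topological Hopf element $\eta_{\mathrm{top}}$ realizes the $\ZZ/2$ summand. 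Injectivity of the left-hand map---i.e., that $\{-1,-1\}\nu$ is nonzero in $\pi_1(\SS_k)^{\wedge}_2$---is handled by $C_2$-equivariant Betti realization when $k \hookrightarrow \R$: $\RRe^{C_2}_{B,\phi}$ sends $\{-1,-1\}\nu$ to the corresponding nonzero element of $\pi_1(\SS_{C_2})^{\wedge}_2$, which can be tracked explicitly using \aref{prop:BettiSubR}. For general real closed $k$, the colimit argument of \aref{prop:csphere} reduces to the embeddable case.

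The main obstacle is the last point---verifying that the symbol $\{-1,-1\}\nu$ survives to $\pi_1(\SS_k)^{\wedge}_2$ and is captured by the realization---since the remaining parts are either formal consequences of the main theorem or standard facts about the unit to Hermitian $K$-theory. Once surjectivity, the chain-complex property, and injectivity are all in hand, the order match from the first two paragraphs forces the sequence to be short exact.
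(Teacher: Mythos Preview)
Your approach is essentially the paper's: identify $\pi_1(\SS_k)^\wedge_2$ abstractly with $(\ZZ/2)^3$ via the main theorem and Araki--Iriye, check surjectivity of the unit map to $\mathrm{KO}$, and use $C_2$-Betti realization to show $\{-1,-1\}\nu = \rho^2\nu$ is nonzero and lies in the kernel, with an order count finishing the argument. One correction on the surjectivity step: the motivic Hopf map $\eta$ lives in $\pi_\alpha$, not $\pi_1$; the paper handles surjectivity by invoking \cite[Lemma 5.12]{OrmsbyOstvaer:pi1}, which shows $\langle u\rangle\eta_s \mapsto ([u],1)$, so the relevant preimages are $\eta_s$ and $\langle -1\rangle\eta_s$ rather than the elements you name.
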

\begin{proof}
By \cite{AI:involutions}, we have $\pi_1\SS_{C_2} = (\ZZ/2)^3$ with basis
$\eta_s,[C_2/e]\eta_s,e^2\nu_{C_2}$ where $[C_2/e]$ is the class of $C_2/e$ in $A(C_2)$, $e$ is represented by the
canonical map $S^0\to S^\sigma$, and $\nu_{C_2}$ is the
$C_2$-equivariant quaternionic Hopf
map.  By \aref{cor:groups}, there is an abstract isomorphism
$\pi_1(\SS_k)^{\wedge}_{2} \iso (\ZZ/2)^3$.  (Recall that $(2,\eta)$-completion is the same as $2$-completion when the 2-primary cohomological dimension of $k[i]$ is finite.)  By \cite[Lemma
5.12]{OrmsbyOstvaer:pi1}, the map $\pi_1(\SS_k)^{\wedge}_2\to
K^M_1(k)/2\oplus \ZZ/2$ is surjective, taking $\langle u\rangle
\eta_s$ to $([u],1)$ (where $\langle u\rangle$ represents the quadratic form $uX^2$ in $GW(k)$).  It follows that $\eta_s$ and $\langle
-1\rangle\eta_s$ are linearly independent.  The $C_2$-Betti
realization of $\rho^2\nu$ is $e^2\nu_{C_2}\ne 0$, and $\nu=0\in
\pi_{1+2\alpha}\mathrm{KO} = 0$, so $\rho^2\nu$ is nonzero and
linearly independent of $\eta_s,\langle -1\rangle\eta_s$.  The
corollary follows.
\end{proof}

\begin{remark}
If $k$ is real closed, the map $\pi_1\SS_{C_2}\to
\pi_1\SS_k$ is given by
\[
  \eta_s\mapsto \eta_s,\quad [C_2/e]\eta_s\mapsto \langle 1,-1\rangle
  \eta_s,\quad e^2\nu_{C_2}\mapsto \rho^2\nu.
\]
\end{remark}

Finally we note that an equivariant embedding theorem implies a nonequivariant embedding theorem. 

\begin{corollary} \label{cor:rel}
Let $L/k$ be a finite Galois extension with Galois group $G$.  If the functor
$c_{L/k}^*:\SH_{G}\to \SH_{k}$ is full and faithful, then the constant presheaf functor
$c^*_{L/L}:\SH\to \SH_{L}$ is full and faithful as well.  
\end{corollary}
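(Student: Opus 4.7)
The plan is to identify $c^{*}_{L/L}$ as a composite of $c^{*}_{L/k}$ with base change and trivial-action functors, and then transfer full faithfulness through a chain of standard adjunctions. Let $p:\spec(L)\to\spec(k)$ denote the structure morphism; since $p$ is étale, there is an adjunction $p_{\#}\dashv p^{*}$ between $\SH_{k}$ and $\SH_{L}$. Let $\epsilon^{*}:\SH\to\SH_{G}$ denote the trivial-action functor and $U:\SH_{G}\to\SH$ the underlying-spectrum functor, so that $U\epsilon^{*}=\id_{\SH}$. The key geometric input is the natural equivalence $p^{*}\circ c^{*}_{L/k}\circ \epsilon^{*}\wkeq c^{*}_{L/L}$, since both functors send a simplicial set $X$ to the constant $L$-motivic suspension spectrum on $X$. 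Combined with the identification $c^{*}_{L/k}(G/e_{+})=\spec(L)_{+}$ and strong symmetric monoidality of $c^{*}_{L/k}$, this yields $c^{*}_{L/k}(G_{+}\wedge \epsilon^{*}X)\wkeq c^{*}_{L/k}(\epsilon^{*}X)\wedge \spec(L)_{+}$ for every $X\in\SH$.

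For $X,Y\in\SH$, I would then run the following chain of natural isomorphisms:
\begin{align*}
[c^{*}_{L/L}X,\,c^{*}_{L/L}Y]_{L}
&\iso [p^{*}c^{*}_{L/k}\epsilon^{*}X,\,p^{*}c^{*}_{L/k}\epsilon^{*}Y]_{L}\\
&\iso [p_{\#}p^{*}c^{*}_{L/k}\epsilon^{*}X,\,c^{*}_{L/k}\epsilon^{*}Y]_{k}\\
&\iso [c^{*}_{L/k}\epsilon^{*}X\wedge\spec(L)_{+},\,c^{*}_{L/k}\epsilon^{*}Y]_{k}\\
&\iso [c^{*}_{L/k}(G_{+}\wedge\epsilon^{*}X),\,c^{*}_{L/k}\epsilon^{*}Y]_{k}\\
&\iso [G_{+}\wedge\epsilon^{*}X,\,\epsilon^{*}Y]_{G}\\
&\iso [X,\,U\epsilon^{*}Y]_{\SH}\;=\;[X,Y]_{\SH}.
\end{align*}
Here the second isomorphism uses the $p_{\#}\dashv p^{*}$ adjunction together with the projection formula $p_{\#}p^{*}A\wkeq A\wedge\spec(L)_{+}$ for the étale morphism $p$; the fourth uses strong monoidality of $c^{*}_{L/k}$ and the identification $c^{*}_{L/k}(G/e_{+})=\spec(L)_{+}$; the fifth is the assumed full faithfulness of $c^{*}_{L/k}$; and the last step is the induction-restriction adjunction $G_{+}\wedge(-)\dashv U$. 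A naturality check, obtained by tracing the unit and counit of each adjunction applied to a morphism of the form $c^{*}_{L/L}(f)=p^{*}c^{*}_{L/k}\epsilon^{*}(f)$, confirms that the composite isomorphism inverts the map induced by $c^{*}_{L/L}$, which establishes full faithfulness.

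The main obstacles are bookkeeping rather than conceptual. One must verify the natural equivalence $p^{*}\circ c^{*}_{L/k}\circ \epsilon^{*}\wkeq c^{*}_{L/L}$ on the level of homotopy categories and confirm that the derived projection formula and induction-restriction adjunction are available in the model-categorical setup of Section \ref{sec:constructions}. Both should follow from the explicit constructions collected there: étaleness of $p$ gives the smooth-base-change projection formula in $\SH_{k}$, while $c^{*}_{L/k}\circ \epsilon^{*}$ is by construction the constant-presheaf functor $\SH\to\SH_{k}$ (as $\epsilon^{*}$ records a $G$-space with trivial action, on which the Galois functor acts object-wise by sending the terminal object $G/G$ to $\spec(k)$).
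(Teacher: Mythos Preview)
Your argument is correct and follows essentially the same route as the paper's. Both proofs rest on the change-of-group/change-of-base compatibility $p^{*}\circ c^{*}_{L/k}\wkeq c^{*}_{L/L}\circ\res^{G}_{e}$ (the paper's \aref{prop:stablecompats}), the induction--restriction adjunction $\ind^{G}_{e}\dashv\res^{G}_{e}$, its motivic counterpart $p_{\#}\dashv p^{*}$, and the assumed full faithfulness of $c^{*}_{L/k}$. The only difference is packaging: the paper fixes the source to be $S^{n}$, obtains an isomorphism $[S^{n},\res X]_{e}\to[c^{*}S^{n},c^{*}\res X]_{L}$ from the commuting square, and then invokes a density (localizing-subcategory) argument to pass to arbitrary sources; you instead keep both $X$ and $Y$ general from the outset and use the projection formula $p_{\#}p^{*}A\wkeq A\wedge\spec(L)_{+}$ to run the adjunction chain directly, which sidesteps the density step at the cost of one extra standard identity and a naturality check. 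Neither approach offers a real advantage over the other.
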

\begin{proof}
Assume that $c_{L/k}^*$ is full and faithful and consider the commutative diagram
\[
\xymatrix{
  [G_{+}\wedge S^n,X]_{G}\ar[d]_\iso\ar[r]^-{c_{L/k}^*}_-\iso
  &[c_{L/k}^*(G_{+}\wedge S^n),c^*_{L/k}X]_{k}\ar[d]^\iso\\
  [S^n,\res X]_{e}\ar[r]_-{c_{L/L}^*} &[c^* S^n,c^*\res X]_{L},
}
\]
obtained using \aref{prop:stablecompats}.
 The vertical
arrows are isomorphisms,
and the top horizontal arrow is an isomorphism by assumption.  Thus
the bottom horizontal arrow is an isomorphism as well.  Since every
spectrum is the restriction $\res X$ of  some $G$-spectrum $X$, we can use
a density argument as in the proof of \aref{thm:textmain} to conclude that $c_{L/L}^*$ is full and
faithful. 
  
\end{proof}

\section{The trace homomorphism and necessary conditions for
  full-faithfulness} \label{sec:classical}
In this section, we discuss the
possibility of  $c^*_{L/k}$ being full and faithful for more general Galois extensions $L/k$. 
As noted in the introduction, presence of torsion in the Grothendieck-Witt group is the first obvious obstruction to  an isomorphism on $\pi_{0}$ and therefore to $c^*_{L/k}$ inducing a full and faithful embedding. However, there are many fields whose Grothendieck-Witt group is torsion-free and we are able to place strong restrictions on which fields $k$ and $L$ 
can have the property that $c^*_{L/k}$ induces an isomorphism on $\pi_{0}$.

Recall that the classical map $h_{L/k}:A(G)\to GW(k)$ mentioned in the introduction is the unique ring homomorphism with the property that $G/H\in A(G)$ is mapped to 
 $\tr_{L^H/k}(\langle 1\rangle)$.  (See \cite[\S 4]{BeaulieuPalfrey} 
for the basic properties of $h_{L/k}$.) 
The functor $c^*_{L/k}:\SH_{G}\to \SH_{k}$ also induces a map $c^*_{L/k}:A(G)\to GW(k)$.
The following is essentially a rephrasing of M. Hoyois's \cite{Hoyois:trace} computation of the motivic Euler characteristic of a separable field extension.

\begin{proposition}\label{prop:oldequalsnew}
  The maps $c_{L/k}^*:A(G)\to GW(k)$ and $h_{L/k}$  are equal. 
\end{proposition}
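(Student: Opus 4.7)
The plan is to reduce the identification to checking generators of $A(G)$ and then invoke Hoyois's formula. Recall that the classical Segal--tom Dieck isomorphism $A(G)\xrightarrow{\iso}\End_{\SH_G}(\SS_G)$ sends a finite $G$-set $T$ to its equivariant Euler characteristic $\chi(T_+)$; in particular, $A(G)$ is generated as an abelian group by the classes $[G/H]$ corresponding to $\chi((G/H)_+)$. Since $c_{L/k}^*$ and $h_{L/k}$ are both ring homomorphisms, it suffices to check that $c_{L/k}^*([G/H])=h_{L/k}([G/H])=\tr_{L^H/k}(\langle 1\rangle)$ for each subgroup $H\le G$.

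First I would use that $c_{L/k}^*$ is strong symmetric monoidal and triangulated (\aref{thm:construction}). Such functors preserve dualizable objects and commute with the formation of Euler characteristics. Applying this to $(G/H)_+$, which is dualizable in $\SH_G$, and using that by construction $c_{L/k}^*$ sends $G/H$ to $\spec(L^H)$, we obtain the identity
\[
c_{L/k}^*\bigl(\chi((G/H)_+)\bigr)=\chi\bigl(c_{L/k}^*((G/H)_+)\bigr)=\chi\bigl(\spec(L^H)_+\bigr)
\]
in $\End_{\SH_k}(\SS_k)=GW(k)$.

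Next I would invoke Hoyois's computation \cite{Hoyois:trace} of the motivic Euler characteristic of a finite separable field extension: for a separable extension $E/k$, one has $\chi(\spec(E)_+)=\tr_{E/k}(\langle 1\rangle)\in GW(k)$. Applying this with $E=L^H$ (which is separable over $k$ since $L/k$ is Galois, hence separable) yields
\[
c_{L/k}^*([G/H])=\tr_{L^H/k}(\langle 1\rangle)=h_{L/k}([G/H]),
\]
as required.

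The only genuinely non-formal input is Hoyois's trace formula; everything else is bookkeeping about symmetric monoidal functors and the standard identifications $A(G)\iso\End_{\SH_G}(\SS_G)$ and $GW(k)\iso\End_{\SH_k}(\SS_k)$. The mild subtlety to be careful about is that Segal--tom Dieck's isomorphism really does send $[G/H]$ to the categorical Euler characteristic of $(G/H)_+$ (so that the pushforward under a symmetric monoidal functor computes exactly what we want); once this is in place, the proposition follows immediately.
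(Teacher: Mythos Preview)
Your proof is correct and follows essentially the same approach as the paper: reduce to generators $[G/H]$, use that the strong symmetric monoidal functor $c_{L/k}^*$ preserves Euler characteristics so that $c_{L/k}^*(\chi(G/H))=\chi(\spec(L^H))$, and then apply Hoyois's trace formula to identify this with $\tr_{L^H/k}(\langle 1\rangle)$. The paper's argument is slightly terser but otherwise identical.
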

\begin{proof}  
 The identification $A(G)\iso \End_{\SH_{G}}(\SS_{G})$ is given by sending a finite $G$-set $M$ to its Euler characteristic $\chi(M)$ (for a recollection of Euler characteristics and their properties see, \emph{e.g.}, \cite{May:GeneralPic}). The functor $c^*_{L/k}$ is strong symmetric monoidal and so we have that 
 $c^*_{L/k}\chi(G/H) = 
 \chi(c^*_{L/k}(G/H))=\chi(\spec(L^{H}))$ in $\End_{\SH_{k}}(\SS_{k})$.  But by \cite[Theorem 7]{Hoyois:trace}, under the identification $\End_{\SH_{k}}(\SS_{k})\iso GW(k)$, we have  $\chi(\spec(L^H)) = \tr_{L^H/k}(\langle 1 \rangle)$.
 \end{proof}

A field $k$ is \emph{pythagorean} if and only if sums of
squares in $k$ are squares in $k$.  Since $A(G)$ is always torsion
free as an abelian group, the importance of pythagorean
fields in our context is illustrated by the following lemma.

\begin{lemma} \label{lem:pyth}
The abelian group underlying $GW(k)$ is torsion free if and only if the field
$k$ is pythagorean.  If $k$ is pythagorean with finitely many orderings, then the free rank of
$GW(k)$ is $1+x(k)$ where $x(k)$ denotes the number of orderings of $k$.
\end{lemma}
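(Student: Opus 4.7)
The plan is to derive both claims from the short exact sequence of abelian groups
\[
0 \to \ZZ h \to GW(k) \to W(k) \to 0,
\]
where $h = \langle 1,-1\rangle$ is the hyperbolic plane and $W(k)$ is the Witt ring of $k$. Since $\ZZ h \cong \ZZ$ is torsion-free of rank one, $GW(k)$ is torsion-free iff $W(k)$ is, and $\rank GW(k) = 1 + \rank W(k)$ whenever the latter is finite.

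For the forward direction of the biconditional I would split on whether $k$ is formally real. If $k$ is pythagorean but not formally real, then $-1$ is a sum of squares, hence by pythagoreanness itself a square, so $i \in k$. For any $a \in k^\times$ the identity
\[
a = \left(\tfrac{a+1}{2}\right)^2 + \left(\tfrac{-i(a-1)}{2}\right)^2
\]
realizes $a$ as a sum of two squares, and hence (again using pythagoreanness) as a square. Thus $k$ is quadratically closed, $GW(k) = \ZZ$, and the torsion-free assertion is trivial. If instead $k$ is formally real pythagorean, I would invoke Pfister's classical theorem that $W(k)$ is then torsion-free; combining with the sequence above yields $GW(k)$ torsion-free.

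For the converse I would argue by contrapositive. If $k$ is not pythagorean, choose $a,b \in k$ with $c := a^2 + b^2$ nonzero and not a square. Since $\langle 1, 1\rangle$ represents $c$, the classification of rank-two quadratic forms by rank and discriminant gives $\langle 1,1\rangle \cong \langle c, 1/c\rangle \cong \langle c, c\rangle$ in $GW(k)$. Consequently
\[
2 \bigl(\langle 1\rangle - \langle c\rangle\bigr) = 0 \quad \text{in } GW(k),
\]
and the element $\langle 1\rangle - \langle c\rangle$ is nonzero because $c \notin (k^\times)^2$, producing a nontrivial $2$-torsion class.

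For the rank assertion: the non-real pythagorean case gives $GW(k) = \ZZ$ and $x(k) = 0$, so $\rank GW(k) = 1 = 1 + x(k)$. In the formally real pythagorean case with $x(k) < \infty$, Pfister's local--global principle identifies the kernel of the total signature map $W(k) \to \ZZ^{x(k)}$ with the torsion subgroup of $W(k)$; since this subgroup vanishes, the map is injective, and a direct check using one-dimensional forms whose signs at distinct orderings disagree shows that the image has finite index. Hence $\rank W(k) = x(k)$, and the short exact sequence produces $\rank GW(k) = 1 + x(k)$. The one step demanding genuine care is the identification $\langle 1,1\rangle \cong \langle c,c\rangle$ underlying the contrapositive, though this is a direct consequence of the classification of binary forms; everything else reduces to classical Witt ring theory of pythagorean fields.
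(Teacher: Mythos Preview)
Your proof is correct and follows precisely the approach the paper gestures at: the paper's proof is the single sentence ``This is a standard enhancement of [Lam, Theorem VIII.4.1 \& Corollary VIII.6.15] from the Witt ring to the Grothendieck--Witt ring case,'' and your short exact sequence $0 \to \ZZ h \to GW(k) \to W(k) \to 0$ is exactly that enhancement, after which you unpack the cited Witt-ring results (Pfister's torsion theorem and the local--global principle). One minor remark: your ``direct check'' that the total signature has finite-index image is cleanest phrased as linear independence of the distinct ring homomorphisms $\operatorname{sgn}_{P_i}\colon W(k)\to\ZZ$ (a Dedekind-style character argument), rather than by separating individual pairs of orderings with one-dimensional forms, but either way the conclusion is standard.
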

\begin{proof}
This is a standard enhancement of \cite[Theorem VIII.4.1 \& Corollary VIII.6.15]{Lam:intro} from the Witt ring to Grothendieck-Witt ring case.
\end{proof}

We will also need the following lemma in order to analyze $h_{L/k}$.

\begin{lemma} \label{lem:x}
If $k$ is pythagorean and $[k^\times \smash : \, (k^\times)^2] = 2^n$, then
\[
  n\le x(k)\le 2^{n-1}.
\]
\end{lemma}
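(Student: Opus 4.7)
The plan is to use the standard identification of orderings of $k$ with certain characters on $k^{\times}/(k^{\times})^{2}$ and then extract both bounds from this identification. Throughout, I assume $k$ is formally real (otherwise $k$ has $-1\in (k^\times)^2$, so $n=0$ and $x(k)=0$, and the claim is vacuous or conventional).

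For each ordering $P$ of $k$, define a character
\[
  \chi_P:k^{\times}/(k^{\times})^{2}\longrightarrow \{\pm 1\},\qquad \chi_P(a) = \begin{cases} +1, & a\in P,\\ -1, & a\notin P,\end{cases}
\]
which is a well-defined group homomorphism since $P$ is closed under multiplication and contains all nonzero squares. Note that $\chi_P(-1)=-1$ in every ordering $P$.

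For the lower bound $n\le x(k)$, the plan is to show that the total character map
\[
  k^{\times}/(k^{\times})^{2}\longrightarrow \prod_{P\in X_k}\{\pm 1\},\qquad a\longmapsto (\chi_P(a))_P,
\]
is injective, where $X_k$ denotes the set of orderings. By the Artin-Schreier theorem, an element $a\in k^{\times}$ that is positive in every ordering is a sum of squares; since $k$ is pythagorean, such an $a$ is itself a square, so its class in $k^{\times}/(k^{\times})^{2}$ is trivial. Injectivity then yields $2^n=|k^{\times}/(k^{\times})^{2}|\le 2^{x(k)}$, hence $n\le x(k)$.

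For the upper bound $x(k)\le 2^{n-1}$, the plan is to observe that $P\mapsto \chi_P$ is injective (distinct orderings differ on some element, and the sign of that element picks out the ordering) and that the image lies in the coset of characters sending $-1$ to $-1$. Because $k$ is formally real and pythagorean, $-1$ is a nontrivial class in $k^{\times}/(k^{\times})^{2}$, so the set of group homomorphisms $k^{\times}/(k^{\times})^{2}\to\{\pm 1\}$ with $\chi(-1)=-1$ has cardinality exactly $2^{n-1}$. Combining, $x(k)\le 2^{n-1}$.

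The main content is really the lower bound, which rests on invoking Artin-Schreier to turn ``totally positive'' into ``sum of squares,'' and then the pythagorean hypothesis to collapse ``sum of squares'' into ``square.'' The upper bound is essentially a counting exercise on a coset of the Pontryagin dual of $k^{\times}/(k^{\times})^{2}$.
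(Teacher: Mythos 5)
Your proof is correct. The paper does not actually supply an argument for this lemma; it simply cites it to Exercise VIII.16 of Lam's \emph{Introduction to Quadratic Forms over Fields}, so there is nothing internal to compare against. Your write-up is essentially the standard solution to that exercise, and it is complete: the lower bound correctly combines the Artin--Schreier characterization of totally positive elements as sums of squares with the pythagorean hypothesis to see that the map $k^\times/(k^\times)^2 \to \prod_{P}\{\pm 1\}$ has trivial kernel; the upper bound correctly observes that $P\mapsto \chi_P$ embeds the set of orderings into the coset $\{\chi : \chi(-1)=-1\}$ of the character group, and that this coset has size $2^{n-1}$ because $-1$ is a nontrivial square class when $k$ is formally real. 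Your handling of the degenerate non-formally-real case (where $n=0=x(k)$) is also sound. In short, you have filled in the citation with a correct, self-contained proof rather than taking a different route.
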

\begin{proof}
This is a specialization of \cite[Exercise VIII.16]{Lam:intro}.
\end{proof}

Recall that $k$ is \textit{euclidean} if $-1$ is not a sum of squares in $k$ and $[k^\times \smash : \, (k^\times)^2]=2$.

\begin{theorem} \label{thm:necessary}
The map $h_{L/k}$ is an isomorphism if and only if either $k$ is quadratically
closed and $L=k$, or $k$ is euclidean and $L = k[i]$.
\end{theorem}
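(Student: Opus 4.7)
The plan is to invoke \autoref{prop:oldequalsnew} to identify $c_{L/k}^*:A(G)\to GW(k)$ with the Dress trace homomorphism $h_{L/k}$ and then to analyze when the latter is a ring isomorphism. The ``if'' direction reduces to two explicit computations. When $k$ is quadratically closed and $L=k$, the group $G$ is trivial and both rings are $\Z$ with $h_{L/k}$ the identity. When $k$ is euclidean and $L=k[i]$, we have $G=C_2$; using that $2$ is a square in any pythagorean field, a direct calculation gives $\tr_{k[i]/k}\langle 1\rangle=\langle 2,-2\rangle=\langle 1,-1\rangle$, which together with $h_{L/k}([C_2/C_2])=\langle 1\rangle$ yields a $\Z$-basis of $GW(k)\cong \Z\langle 1\rangle\oplus\Z\langle -1\rangle$.

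For the converse, I would assume $h_{L/k}$ is an isomorphism. Since $A(G)$ is torsion-free, so is $GW(k)$, so \autoref{lem:pyth} gives that $k$ is pythagorean. Matching free ranks then yields $1+x(k)=r(G)$, where $r(G)$ is the (finite) number of conjugacy classes of subgroups of $G$; in particular $x(k)$ is finite, and \autoref{lem:x} applies with $n=\log_2[k^\times:(k^\times)^2]$ to give $n\le x(k)\le 2^{n-1}$ when $n\ge 1$. If $G$ is trivial then $r(G)=1$ forces $x(k)=0$, hence $n=0$, so $k$ is quadratically closed and $L=k$, recovering the first case.

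Next I would rule out nontrivial $G$ of odd order. At any ordering $\sigma$ of $k$, the decomposition group of an archimedean place of $L$ has order dividing both $|G|$ and $[\C:\R]=2$, so it is trivial when $|G|$ is odd; hence $L$ and each intermediate $L^H$ is totally real at $\sigma$. Therefore each trace form $\tr_{L^H/k}\langle 1\rangle$ has rank and signature both equal to $[G:H]$ at every ordering, and because the map $GW(k)\hookrightarrow \Z\times\Z^{x(k)}$ by rank and signatures is injective for pythagorean $k$, it follows that $h_{L/k}([G/H])=[G:H]\langle 1\rangle$ for every $H$. The image of $h_{L/k}$ is then the rank-$1$ subgroup $\Z\langle 1\rangle\subset GW(k)$, contradicting surjectivity since $r(G)\ge 2$.

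The main obstacle is eliminating $|G|$ even with $|G|\ge 4$. Since $h_{L/k}$ respects augmentations, it induces an isomorphism of fundamental ideals modulo squares $I(G)/I(G)^2\cong I(k)/I(k)^2\cong k^\times/(k^\times)^2\cong (\Z/2)^n$ (the middle isomorphism being Pfister's), so $I(G)/I(G)^2$ must be elementary abelian $2$. For $G$ cyclic of order $2m$ with $m\ge 2$, the relation $[G/e]^2=2m\cdot[G/e]$ in $A(G)$ gives $([G/e]-2m)^2=-2m([G/e]-2m)$, producing a $\Z/2m$-summand in $I(G)/I(G)^2$, ruling this case out. For the remaining even $G$ with $|G|\ge 4$, I would use a rank count: the image of $h_{L/k}$ is generated by the trace forms $\tr_{L^H/k}\langle 1\rangle$, whose diagonalizations involve only elements of the subgroup $D\subset k^\times/(k^\times)^2$ spanned by discriminants of the quadratic subextensions of $L/k$ (together with $-1$, which enters via hyperbolic summands in higher-degree trace forms), and the $\Z$-rank of the subgroup of $GW(k)$ generated by $\{\langle d\rangle:d\in D\}$ is at most $1+\dim_{\Z/2}D$, which one checks is strictly less than $r(G)$ in all remaining cases. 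This forces $|G|=2$; then $r(G)=2$ gives $x(k)=1$, and \autoref{lem:x} forces $n=1$, so $k$ is euclidean, and since the unique quadratic extension of a euclidean field is $k[i]$, we conclude $L=k[i]$.
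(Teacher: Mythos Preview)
Your ``if'' direction and the reduction to pythagorean $k$ are fine, and your treatment of the odd-order case is correct in substance (though ``archimedean place'' is not the right terminology for a general field---you mean the decomposition group of an extension of the ordering to a real closure). The real gap is in the even case $|G|\ge 4$.

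For cyclic $G$ of order $2m$ you only show that $2m\cdot([G/e]-2m)\in I(G)^2$, not that the order in $I(G)/I(G)^2$ is exactly $2m$; the latter requires an actual computation of $I(G)/I(G)^2$, which you have not carried out. More seriously, for non-cyclic even $G$ your ``rank count'' rests on the assertion that every trace form $\tr_{L^H/k}\langle 1\rangle$ diagonalizes using only square classes from the subgroup $D$ generated by discriminants of quadratic subextensions together with $-1$. You give no argument for this. For instance, when $G=A_4$ there are \emph{no} quadratic subextensions at all, so your $D$ is essentially trivial, yet the degree-$4$ and degree-$6$ subfields have trace forms whose signatures genuinely vary with the ordering. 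Over a pythagorean field one can eventually justify a version of this (forms are determined by rank and signatures, and signatures of trace forms are controlled by decomposition groups of order $\le 2$), but that is a real argument you have not supplied, and the final inequality ``$1+\dim_{\Z/2}D<r(G)$ in all remaining cases'' is simply asserted.

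The paper sidesteps this entire case analysis with a single structural observation. By the construction in \cite[\S 4]{BeaulieuPalfrey}, $h_{L/k}$ factors through the subgroup $GW^{\Z}_L(k)\subseteq GW(k)$ of classes that become multiples of $\langle 1\rangle$ over $L$. Surjectivity of $h_{L/k}$ therefore forces $\langle a\rangle_L=\langle 1\rangle$ for every $a\in k^\times$, i.e.\ $k$ is quadratically closed in $L$. Hence $L\supseteq E=k(\sqrt{a}:a\in k^\times)$, so $G$ surjects onto $\Gal(E/k)\cong C_2^n$ with $2^n=[k^\times:(k^\times)^2]$ (in particular $n$ is finite). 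Then
\[
\rk A(G)\ \ge\ \rk A(C_2^n)\ \ge\ 2^n,
\qquad
\rk GW(k)=1+x(k)\ \le\ 1+2^{n-1}
\]
by \aref{lem:x}, and equality of ranks forces $n\le 1$. This one comparison replaces your entire even-order discussion and is both shorter and complete; the key idea you are missing is the factorization through $GW^{\Z}_L(k)$, which immediately yields ``$k$ quadratically closed in $L$'' and hence the $C_2^n$ quotient of $G$.
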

\begin{proof}
If $L/k$ is of one of the prescribed forms, then it is elementary that
$h_{L/k}$ is an isomorphism.

If $h_{L/k}$ is an isomorphism, then $GW(k)$ must be torsion free, in
which case \aref{lem:pyth} implies that $k$ is pythagorean.  If $k$ is
pythagorean and nonreal (\emph{i.e.}, $-1$ is a sum of squares in
$k$), then $k$ is quadratically closed and $GW(k) \cong \ZZ$.  Thus
$A(G)$ has rank $1$ and therefore $G = \{e\}$ and $L = k$.

Now assume that $k$ is pythagorean and formally real (so $-1$ is not a sum
of squares in $k$).  By the construction in \cite[\S
4]{BeaulieuPalfrey}, we know that $h$ factors through the group
completion of the monoid of $k$-quadratic forms $q$ such that
$q_L\cong n\langle 1\rangle$ for some natural number $n$; call this
group $GW^{\ZZ}_L(k)$.  Since $h$
is an isomorphism, $GW^\ZZ_L(k) = GW(k)$, whence $\langle a \rangle_L
= \langle 1\rangle$ in $GW(L)$ for all $a\in k^\times$.  It follows
that $k$ is quadratically closed in $L$.

Choose a basis $\{x_1,x_2,\ldots\}$ of $k^\times/(k^\times)^2$ and let
$E = k(\sqrt{x_1},\sqrt{x_2},\ldots)$.  We
have just proven that 
$E/k$ is a subextension of $L/k$, whence
$G$ surjects onto $\Gal(E/k)$.  Since $G$ is finite, $k$ must have
finitely many square classes and $\Gal(E/k)\cong C_2^n$.  Recall that the rank of $A(G)$ is the
number of conjugacy classes of subgroups of $G$.  We deduce that $\rk
A(G)\ge \rk A(C_2^n)$.  Just
counting the subgroups of $C_2^n$ of order $1$ or $2$, we find that
$\rk A(C_2^n)\ge 2^n$.  Since $2^n>1+2^{n-1}$ for $n>2$, \aref{lem:x}
implies that $n=0$ or $1$.  Since $k$ is formally real, we can exclude
the case $n=0$, whence $k$ is formally real pythagorean with
$[k^\times \smash : \, (k^\times)^2] = 2$, \emph{i.e.}, $k$ is euclidean.  In this
case $GW(k)$ has rank $2$, so $L/k$ is a quadratic extension.  Since
$k$ is quadratically closed in $L$, $L = k[i]$, concluding the proof.
\end{proof}

\begin{corollary} \label{cor:necessary}
If $c_{L/k}^*$ is full and faithful, then $k$ is of the form
described in \aref{thm:necessary}.
\end{corollary}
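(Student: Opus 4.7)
The plan is to observe that this corollary is essentially an immediate consequence of what has just been proved, together with \aref{prop:oldequalsnew} and \aref{thm:necessary}. First, I would unpack what full and faithfulness of $c^*_{L/k}$ implies at the level of the endomorphism ring of the sphere spectrum: the induced map
\[
  c^*_{L/k}:\End_{\SH_G}(\SS_G)\longrightarrow \End_{\SH_k}(\SS_k)
\]
must be an isomorphism of rings.

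Next, I would invoke the classical identifications $\End_{\SH_G}(\SS_G)\iso A(G)$ (Segal--tom Dieck) and $\End_{\SH_k}(\SS_k)\iso GW(k)$ (Morel) to rewrite this as an isomorphism $A(G)\xrightarrow{\iso} GW(k)$. By \aref{prop:oldequalsnew}, the ring homomorphism induced on $\pi_0$ by $c^*_{L/k}$ agrees with the classical trace map $h_{L/k}$. Hence $h_{L/k}:A(G)\to GW(k)$ is an isomorphism.

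Finally, applying \aref{thm:necessary} directly, we conclude that either $k$ is quadratically closed and $L=k$, or $k$ is euclidean and $L=k[i]$, which is precisely the desired conclusion. There is no real obstacle here: the substantive content is all contained in \aref{thm:necessary}, and this corollary merely records that full faithfulness of the stable functor forces the already-analyzed isomorphism on zeroth homotopy, so no further work beyond citing the preceding results is required.
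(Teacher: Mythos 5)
Your proof is correct and matches the paper's (implicit) argument exactly: the paper leaves the corollary without an explicit proof precisely because, as you observe, full faithfulness forces $c^*_{L/k}$ to be an isomorphism on $\End(\SS_G)\to\End(\SS_k)$, i.e.\ $A(G)\to GW(k)$, which by \aref{prop:oldequalsnew} is $h_{L/k}$, and then \aref{thm:necessary} applies directly.
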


\begin{remark} \label{rmk:pi1}
Algebraically closed and real closed fields are special examples of
quadratically closed and euclidean fields, but there are many other examples of these kinds of fields.
For instance, the field of real
constructible numbers $\widetilde{\QQ}\cap \RR$ (where
$\widetilde{\QQ}$ is the quadratic closure of $\QQ$) is euclidean but
not real closed.  

The necessary conditions which we deduced in the previous result were obtained only by analyzing the zeroth homotopy group of the sphere spectrum.
The authors expect that
torsion phenomena in the higher homotopy groups of $\SS_k$ will
preclude $c_{L/k}^*$ from being full and faithful unless $k$ is
algebraically or real closed.
\end{remark}

\begin{conjecture}
 Let $L$ be a field of characteristic zero. The functor $c_{L/L}^{*}:\SH\to \SH_{L}$ is full and faithful if and only if $L$ is algebraically closed. 
\end{conjecture}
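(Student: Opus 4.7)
The ``if'' direction is precisely Levine's theorem \cite{Levine:comparison}. For the converse, suppose $c^*_{L/L}$ is full and faithful. Since $c^*_{L/L}(\SS) = \SS_L$ and $c^*_{L/L}$ is strong symmetric monoidal, it induces an isomorphism on endomorphism rings of the unit, giving
\[
 \ZZ \;=\; \End_{\SH}(\SS) \;\xrightarrow{\iso}\; \End_{\SH_L}(\SS_L) \;=\; GW(L),
\]
using Morel's theorem \cite{Morel:sphere}. Since $GW(L) \iso \ZZ$ holds if and only if every element of $L^\times$ is a square---this is the $G = \{e\}$ case of \aref{thm:necessary}---we conclude that $L$ is quadratically closed.

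It remains to upgrade ``quadratically closed'' to ``algebraically closed''. Suppose for contradiction that $L$ is quadratically closed of characteristic zero but $L \neq \bar{L}$. By the Artin--Schreier theorem, $[\bar{L}:L]$ is either $2$ or infinite. The former would make $L$ real closed and force the free rank of $GW(L)$ to be $1 + x(L) \geq 2$ by \aref{lem:pyth}, contradicting $GW(L) = \ZZ$. Hence $[\bar{L}:L] = \infty$, and because $L$ is quadratically closed it admits no nontrivial extensions of $2$-power degree. Consequently, the absolute Galois group $G_L$ contains nontrivial elements of odd order, so there exists an odd prime $\ell$ for which $L$ has nontrivial $\ell$-primary Galois cohomology.

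The main obstacle is to convert this arithmetic nontriviality into an obstruction to full and faithfulness of $c^*_{L/L}$. Note that if $c^*_{L/L}$ is full and faithful then, since $c^*_{L/L}(S^n) = S^{n,0}$, we are forced to have $\pi_n^{\mathrm{top}} \iso \pi_{n,0}(\SS_L)$ for all $n$, and similarly after any completion compatible with $c^*_{L/L}$. The strategy, parallel to the proof of the main theorem of this paper, is to compare the $\ell$-primary motivic Adams spectral sequence for $\SS_L$ with the classical Adams spectral sequence for $\SS$. By the Voevodsky--Rost norm residue isomorphism, the nontrivial odd Galois cohomology of $L$ produces nontrivial classes in motivic cohomology $H^{*,*}(L;\ZZ/\ell)$ of positive weight which enlarge the $E_2$-page of the motivic Adams spectral sequence relative to its topological counterpart. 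The hard part---and the reason this remains a conjecture---is to verify that these arithmetic classes genuinely survive the differentials and produce non-topological elements in some weight-zero stem $\pi_{n,0}((\SS_L)^{\wedge}_{\ell,\eta})$, contradicting the isomorphism forced by full and faithfulness.

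Additional technical obstacles include establishing strong convergence of the $\ell$-primary motivic Adams spectral sequence over fields of possibly infinite $\ell$-cohomological dimension (extending \cite{HKO} beyond the setting used in \aref{sec:main}), and controlling the rationalized motivic sphere, which would require progress on \aref{Serrefiniteness}. A complementary approach, in the spirit of the discussion at the end of \aref{sec:main}, would be to develop a genuine profinite $G_L$-equivariant stable homotopy category together with a Betti realization $\SH_L \to \SH_{G_L}$ satisfying $\RRe^{G_L}_B \circ c^*_{L/L} \iso \mathrm{id}$, and to read off the obstruction directly from the existence of nontrivial odd-order elements in $G_L$ via a Burnside-ring computation analogous to \aref{thm:necessary}.
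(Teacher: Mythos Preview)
This statement is a \emph{conjecture} in the paper, not a theorem; there is no proof in the paper to compare against. The paper's entire discussion of it is the sentence following the conjecture: the ``if'' direction is Levine's theorem, and the ``only if'' direction is left open. Your proposal is not a proof either, and to your credit you say so explicitly (``the reason this remains a conjecture'').

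What you have written is a reasonable outline of the obstructions. Your first reduction---that full faithfulness forces $GW(L)\cong\ZZ$ and hence $L$ quadratically closed---is correct and is exactly the $G=\{e\}$ case of the paper's \aref{thm:necessary}. Your observation that the Artin--Schreier theorem rules out $[\bar L:L]=2$ is also fine. From that point on, however, you are sketching a strategy rather than giving an argument: you do not exhibit any specific class in any $\pi_{n,0}((\SS_L)^\wedge_{\ell,\eta})$ that is non-topological, you do not establish convergence of the relevant Adams spectral sequence over such $L$, and the profinite-equivariant approach you mention does not currently exist. The paper anticipates exactly this situation in \aref{rmk:pi1}: the authors expect torsion in higher stems to obstruct full faithfulness beyond the algebraically/real closed cases, but do not prove it.

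In short: there is no gap to name because you have not claimed a proof, and there is no paper proof to compare against because the statement is open. Your writeup would be better labeled as a discussion or heuristic rather than a proof proposal.
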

By Levine's theorem \cite{Levine:comparison} the ``if'' portion of this conjecture is valid. Observe that the validity of this conjecture together with \aref{cor:rel}, would imply that $c_{L/k}:\SH_{G}\to \SH_{k}$ is full and faithful if and only if $k=L$ is algebraically closed or $k$ is real closed and $L=k[i]$.
It is also interesting to ask what happens in positive characteristic.  

\section{Comparison functors}\label{sec:constructions}
In this section we construct and analyze the various comparison functors between stable homotopy categories used in our arguments. To avoid potential confusion concerning notation, we point out that a functor on homotopy categories written as the derived functor $\L F$ (or $\R F)$ of some functor on model categories in this section would be written simply $F$ in previous sections.

\subsection{Motivic model structures}\label{sub:models}
Given a base scheme $S$, the category $\MS(S)$ of based motivic spaces is the category of based simplicial presheaves on $\sm/S$. There are many different options for a motivic model structure on $\MS(S)$. We will use  the so-called closed flasque motivic model structure  introduced in \cite{PPR:KGL}. We recall the basic definitions below and refer to \emph{loc.~cit.}~for full details.   
 The main advantages of this model structure for the present work are that in this model structure all of the standard motivic spheres are cofibrant and all of the various change of base functors as well as the (equivariant) Betti realizations are Quillen functors.

 The closed flasque motivic model structure is a Bousfield localization of the global closed flasque model structure.
The weak equivalences of the global closed flasque model structure are the schemewise weak equivalences of motivic spaces. A global closed flasque fibration is a map which has  the right lifting property with respect to the set $J^{gf}$ defined below. A global closed flasque cofibration is then defined by the appropriate lifting property.
This model structure has sets of generating cofibrations $I^{gf}$ and generating acyclic cofibrations $J^{gf}$ as follows.
Let $\mcal{Z}= \{Z_{i}\to X\}$ be a finite (possibly empty) collection of
closed immersions
in $\sm/S$. Write $\cup \mcal{Z}$ for the 
  categorical union (i.e. union as presheaves) of the $Z_i$ and write $f:\cup \mcal{Z}\to X$ for the induced map. Given two maps $\alpha$ and $\beta$ write $\alpha\square\beta$ for their pushout product.
    \begin{enumerate}
  \item The set $I^{gf}$  consists of all maps of the form $f_+\square \;g_+$ where $f:\cup\mcal{Z}\to X$ is as above and $g:\partial\Delta^{n}\to \Delta^n$ is a generating cofibration of simplicial sets.
   \item  The set  $J^{gf}$ consists of all morphisms of the form $f_+\square\; h_+$, where $f:\cup\mcal{Z}\to X$ is as above and $h:\Lambda^{n}_{j}\to \Delta^{n}$ is a generating acyclic cofibration.
  \end{enumerate}
The global closed flasque model structure is a proper, cellular, simplicial model structure.
Write $\MS(S)_{gf}$ for the category of motivic spaces equipped with the global model structure.
Let 
$$
\xymatrix{
B \ar[r]\ar[d] & Y \ar[d]^{p}\\
A \ar[r]^{i} & X.
}
$$
be a distinguished Nisnevich square, i.e.~ $p$ is an \'etale map of smooth schemes, $i$ is an open immersion, and $p^{-1}(X\setminus A)_{red}\to (X\setminus A)_{red}$ is an isomorphism. Write $\mcal{Q}=\mcal{Q}(i,p)$ for this distinguished square and write $P_{\mcal{Q}}$ for the homotopy pushout in $\MS(S)_{gf}$ of $A$ and $Y$ along $B$, and write $P_{\mcal{Q}}\to X$ for the resulting map.
The \textit{motivic closed flasque model structure} is the left Bousfield localization  of the global model structure at the set of maps 
$$
S = \{ P_{\mcal{Q}}\to X \} \cup \{ W\times\A^{1}\to W\}
$$
where $X$, $W$ range over all smooth $S$-schemes and $\mcal{Q}$ ranges over all distinguished squares.

\subsection{Stable model structures}\label{sub:stablemodels}
We rely on \cite{Hovey:spectra} as needed to equip various categories of spectra (and bispectra) with stable model structures. 
Recall that if $\mcal{C}$ is a left proper cellular symmetric monoidal
model category whose generating cofibrations have cofibrant domain and
$K$ is a cofibrant object of $\mcal{C}$, then   
 Hovey equips the category 
$\sspt_{K}(\mcal{C})$ of symmetric $K$-spectra with a stable model structure and it is again a left proper cellular symmetric monoidal model category \cite{Hovey:spectra}. 
Note that $\MS(S)$ satisfies these assumptions and moreover the motivic spheres $\P^1$ (based at $\infty$), $\A^{1}/\A^{1}\setminus\{0\}$, and $S^{\alpha}:=\A^{1}\setminus\{0\}$ (based at $1$) are all closed flasque cofibrant.

Let $J$ be a closed flasque cofibrant motivic space over $S$. We will simply write 
$\sspt_{J}(S):= \sspt_{J}(\MS(S))$ 
for the category of motivic $J$-spectra. If $J'$ is another closed flasque cofibrant motivic space we write  
$\sspt_{J,J'}(F):= \sspt_{J'}(\sspt_{J}(F))$ for the category of motivic $(J,J')$-bispectra. 
As shown in \cite{PPR:KGL} there is a monoidal Quillen equivalence between  $\sspt_{\P^{1}}(S)$ and
Jardine's model category of motivic symmetric $\P^1$-spectra \cite{Jardine:motivicspectra}.

 In \cite{Hovey:spectra}, functoriality of the model categories of
 symmetric spectra is discussed when $\mcal{C}$ is fixed (\emph{e.g.}~ changing the suspension object $K$ in $\mcal{C}$ or varying the $\mcal{C}$-model category). We will need slightly more general functoriality, which we record before continuing with the construction of the comparison functors of interest to this paper.

Suppose that $\mcal{D}$ is another model category satisfying the same hypothesis as $\mcal{C}$ and $K'$ is a cofibrant object of $\mcal{D}$. Further suppose that we are given the following: 
\begin{enumerate}
 \item a Quillen adjoint pair $\Phi:\mcal{C}\rightleftarrows\mcal{D}:\Psi$, and
 \item a  natural isomorphism $\tau:\Phi(-)\otimes K'\xrightarrow{\iso} \Phi(-\otimes K) $ such that
the iterated isomorphisms $\tau^{p}:\Phi(X)\otimes (K')^{\otimes p}\iso
\Phi(X\otimes K^{\otimes p})$ are $\Sigma_{p}$-equivariant, where the actions are the obvious ones given by permuting the respective factors of $K$ and $K'$. 
\end{enumerate}
 
As seen in the next lemma, $(\Phi,\Psi)$ prolong to a Quillen pair $(\Sp(\Phi), \Sp(\Psi))$ of stable model categories of symmetric spectra. In this situation, we usually write $\Phi$ and $\Psi$ instead of $\Sp(\Phi)$ and $\Sp(\Psi)$  for the prolongations. 

\begin{lemma}\label{lem:functoriality}
With notations and assumptions as above, the pair
$(\Phi, \Psi)$
prolongs to a Quillen adjoint pair on stable model structures
$$
\Sp(\Phi):\sspt_{K}(\mcal{C})\rightleftarrows \sspt_{K'}(\mcal{D}):\Sp(\Psi).
$$
 If $\Phi$ is strong symmetric monoidal then so is $\Sp(\Phi)$.
\end{lemma}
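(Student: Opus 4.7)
The plan is to define $\Sp(\Phi)$ and $\Sp(\Psi)$ explicitly on underlying symmetric sequences, verify the Quillen pair property first levelwise and then for the stable localization, and finally handle the monoidal claim by reducing the smash product to Day-convolution.

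First I would set $\Sp(\Phi)(X)_n := \Phi(X_n)$ for a symmetric $K$-spectrum $X = (X_n, \sigma_n)$, equipped with the $\Sigma_n$-action obtained by applying $\Phi$ to the action on $X_n$, and with structure maps
\[
\Phi(X_n) \otimes K' \xrightarrow{\tau_{X_n}} \Phi(X_n \otimes K) \xrightarrow{\Phi(\sigma_n)} \Phi(X_{n+1}).
\]
The iterated structure maps $\Phi(X_n) \otimes (K')^{\otimes p} \to \Phi(X_{n+p})$ are assembled using $\tau^p$, and the assumed $\Sigma_p$-equivariance of $\tau^p$ is exactly what makes these maps $(\Sigma_n \times \Sigma_p)$-equivariant, so $\Sp(\Phi)(X)$ is a genuine symmetric $K'$-spectrum. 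The right adjoint $\Sp(\Psi)$ is built analogously from the transpose of $\tau$ under the $(\Phi,\Psi)$-adjunction, and the adjunction $\Sp(\Phi) \dashv \Sp(\Psi)$ is obtained by pushing hom-sets through the levelwise adjunction and matching the resulting equalizer descriptions of morphisms of symmetric spectra.

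Next, to see that this is a Quillen pair for the projective level stable model structure, I would check that $\Sp(\Psi)$ preserves levelwise fibrations and levelwise acyclic fibrations; this is immediate because on underlying symmetric sequences $\Sp(\Psi)$ is $\Psi$ applied in each degree, and $\Psi$ is right Quillen. To promote this to the stable model structure, which is the left Bousfield localization at a set of stabilization maps of the form $\zeta_n : F_{n+1}(X \otimes K) \to F_n(X)$, it is enough to verify that $\Sp(\Phi)$ sends each $\zeta_n$ to the corresponding stabilization map over $K'$. This follows from a natural isomorphism $\Sp(\Phi) \circ F_n \cong F_n \circ \Phi$ assembled from $\tau^p$, combined with $\Phi(X \otimes K) \iso \Phi(X) \otimes K'$.

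For the monoidal claim, I would use the Day-convolution description of the smash product of symmetric spectra as a left Kan extension of the tensor product of symmetric sequences along concatenation. If $\Phi$ is strong symmetric monoidal with coherence isomorphism $\mu_{X,Y}:\Phi(X)\otimes\Phi(Y) \xrightarrow{\iso}\Phi(X\otimes Y)$, then applying $\mu$ levelwise produces a candidate isomorphism $\Sp(\Phi)(X) \wedge \Sp(\Phi)(Y) \xrightarrow{\iso} \Sp(\Phi)(X \wedge Y)$, and one checks this respects the coequalizer defining the smash product. The main obstacle I anticipate is the symmetric group bookkeeping: verifying that iterates of $\tau$ interleave coherently with iterates of $\mu$ and with the block-permutation actions that appear in the Day-convolution coequalizer. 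The $\Sigma_p$-equivariance hypothesis on $\tau^p$ is exactly the input that makes this a routine diagram chase rather than requiring further structure.
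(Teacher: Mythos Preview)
Your proposal is correct and matches the paper's construction of $\Sp(\Phi)$ and $\Sp(\Psi)$ essentially verbatim. The one point of divergence is how you pass from the level to the stable Quillen pair: you argue that $\Sp(\Phi)$ carries the stabilizing maps $\zeta_n$ for $K$ to the corresponding stabilizing maps for $K'$ via the identification $\Sp(\Phi)\circ F_n \cong F_n\circ\Phi$, so the left Bousfield localization descends; the paper instead invokes Dugger's criterion, checking that $\Sp(\Phi)$ preserves stable ($=$ level) cofibrations and that $\Sp(\Psi)$ preserves fibrations between stably fibrant objects. Both routes are standard and short. Your approach has the minor advantage of not needing the implicit verification that $\Sp(\Psi)$ preserves $\Omega$-spectra; the paper's route avoids any bookkeeping with free-spectrum functors. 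Your Day-convolution sketch for the monoidal claim is more detailed than the paper's, which simply asserts the statement is immediate.
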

\begin{proof}
Define $\Sp(\Phi)(D)$ by $\Sp(\Phi)(D)_{n} := \Phi(D_{n})$ with structure maps  
$$
\Sp(\Phi)(D)_{n}=\Phi(D_n)\otimes K' \iso \Phi(D_{n}\otimes K) \to \Phi(D_{n+1})=\Sp(\Phi)(D)_{n+1}.
$$ 

The equivariance assumption on $\tau$ implies that the iterations of the structure map $\Sp(\Phi)(D)_{n}\otimes (K')^{\otimes p}\to \Sp(\Phi)(D)_{n+p}$ are $\Sigma_{n}\times \Sigma_{p}$-equivariant and so $\Phi(D)$ is a symmetric $K'$-spectrum. Define $\Sp(\Phi)$ on morphisms in the obvious way.

Note that $\tau$ determines the natural isomorphism $\rho:\Psi\Omega_{K'}(-) \xrightarrow{\iso} \Omega_{K}\Psi(-)$ and the iterations  $\rho^p$ are $\Sigma_p$-equivariant. Now define $\Sp(\Psi)(E)$ by setting $\Sp(\Psi)(E)_{n} := \Psi(E_n)$. The structure maps are defined as the adjoints of 
$$
\Sp(\Psi)(E)_{n}=\Psi(E_{n}) \to \Psi(\Omega_{K'}E_{n+1}) \iso \Omega_{K}\Psi(E_{n+1})=\Omega_{K}\Sp(\Psi)(E)_{n+1}.
$$
The equivariance of $\rho$ implies that this is a symmetric $K$-spectrum. Define $\Sp(\Psi)$ on morphisms in the obvious way.

It is straightforward to verify that $\Sp(\Phi)$ and $\Sp(\Psi)$ are adjoint. The functor $\Sp(\Psi)$ preserves level equivalences and level fibrations. This implies $\Sp(\Phi)$ preserves stable cofibrations and $\Sp(\Psi)$ preserves fibrations between fibrant objects in the stable model structure. It follows from 
\cite[Lemma A.2]{dugger:simp} that $(\Sp(\Phi),\Sp(\Psi))$ is a Quillen adjoint pair on the stable model structures.

It is immediate that $\Sp(\Phi)$ is symmetric monoidal whenever $\Phi$ is.
\end{proof}

\subsection{Galois correspondence}\label{sub:galois}
Let $L/k$ be a finite Galois extension with Galois group $G$.
Define the functor
\begin{equation}\label{eqn:galoiscorrespondence}
c_{L/k}:\Or_{G}\to \Sm/k,
\end{equation}
by $c_{L/k}(G/H) = \spec(L^{H})$ on objects and on maps as follows. First recall that 
$$
\Hom_{\Or_{G}}(G/H, G/H') = \{gH' \,|\, g^{-1}H g \subseteq H'\}.
$$
A straightforward check shows that if $gH'$ is such a coset then the corresponding field automorphism $g:L\to L$ restricts to a map of fields  $g:L^{H'}\to L^{H}$ which  depends only on the coset $gH'$. This defines the desired map $c_{L/k}(G/H)\to c_{L/k}(G/H')$.

The category of $G$-simplicial sets is equivalent to the category of presheaves of simplicial sets on $\Or_{G}$: the presheaf corresponding to $A$ is given by $G/H\mapsto A^{H}$. We thus obtain an adjoint pair of functors
\begin{equation}\label{eqn:unstablepair}
c^*_{L/k}:G\ssetb \rightleftarrows\MS(k):(c_{L/k})_{*} .
\end{equation}

\begin{remark}\label{rem:useful}
For a $G$-simplicial set $A$, the corresponding motivic space $c^*_{L/k}(A)$ isn't in general constant but its possible values are limited to the various fixed points $A^{H}$ for subgroups $H\subseteq G$. To see this, it suffices to consider the case of a $G$-set. Every $G$-set is the disjoint union of orbits and we write this decomposition as 
$A = \coprod_{\Or_{G}}\coprod_{A\langle{G/H}\rangle} G/H$.
 Then $c^*_{L/k}A$ is the motivic space defined by
$$
 (c_{L/k}^*A)(X):= \coprod_{\Or_{G}}\coprod_{A\langle {G/H}\rangle}
 \Hom_{k}(X, \spec(L^{H})).
$$
Note that if $X$ is connected, then $\Hom_{k}(X, \spec(L^{H}))$ is either empty or is a set with 
$|G/H|$ elements and so $c_{L/k}^{*}(A)(X) = A^{H}$ for an appropriate subgroup $H\subseteq G$.
\end{remark}

\begin{lemma}\label{lemma:uff}
 The adjoint pair $c^*_{L/k}:G\ssetb \rightleftarrows\MS(k):(c_{L/k})_{*} $ is a Quillen adjoint pair. Moreover, the induced map on homotopy categories $\L c^*_{L/k}: \HH_{\bullet, G} \to \HH_{\bullet, k}$ is full and faithful.
\end{lemma}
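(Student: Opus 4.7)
The Quillen pair claim reduces to checking generators. In $G\mathrm{sSet}_\bullet$ with the genuine model structure, the generating (acyclic) cofibrations are the maps $(G/H)_+ \wedge (\partial \Delta^n \hookrightarrow \Delta^n)_+$ and $(G/H)_+ \wedge (\Lambda^n_j \hookrightarrow \Delta^n)_+$. These are sent by $c^*_{L/k}$ to $\spec(L^H)_+ \wedge (\partial \Delta^n \hookrightarrow \Delta^n)_+$ (respectively with horn inclusions), which lie among the generating (acyclic) cofibrations of the closed flasque motivic model structure on $\MS(k)$: take $\mcal{Z} = \emptyset$ so that the map $f$ in the definition of $I^{gf}$/$J^{gf}$ becomes $\emptyset \to \spec(L^H)$, then smash with the corresponding simplicial generator.

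For the full and faithfulness statement, the plan is to show that the derived unit
\[
\eta_Y : Y \longrightarrow \R (c_{L/k})_* \L c^*_{L/k} Y
\]
is an equivariant weak equivalence for every cofibrant $Y \in G\mathrm{sSet}_\bullet$; by the standard adjunction argument this implies full and faithfulness on the homotopy categories. Since $Y$ is cofibrant we have $\L c^*_{L/k} Y = c^*_{L/k} Y$. A direct computation using \aref{rem:useful} together with Galois theory (which identifies $\Hom_k(\spec(L^H), \spec(L^K))$ with $\Hom_{\Or_G}(G/H, G/K)$) shows that the \emph{underived} unit $Y \to (c_{L/k})_* c^*_{L/k} Y$ is an isomorphism of presheaves on $\Or_G$: both sides evaluate to $Y^H$ at $G/H$. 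Thus the question reduces to showing that the motivic fibrant replacement map $c^*_{L/k} Y \to (c^*_{L/k} Y)^{\mathrm{mot}}$ induces a weak equivalence of sections over each $\spec(L^H)$.

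The key technical step is to show that $c^*_{L/k} Y$ already satisfies the descent conditions defining the motivic model structure when restricted to the full subcategory of finite étale $k$-algebras split by $L$. The $\A^1$-invariance is immediate on representables since each $\spec(L^H) \to \spec k$ is étale, so that $\Hom_k(X \times \A^1, \spec(L^H)) = \Hom_k(X, \spec(L^H))$ for connected smooth $X$; for cofibrant $Y$, a cellular induction propagates this to $c^*_{L/k} Y$. Nisnevich descent reduces, via the Galois correspondence, to descent for representables on distinguished squares of étale $k$-schemes split by $L$, which is a direct verification.

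The main obstacle is precisely this last step: certifying that motivic fibrant replacement of $c^*_{L/k} Y$ does not alter its value at the schemes $\spec(L^H)$, despite fibrant replacement involving nontrivial Nisnevich sheafification and $\A^1$-localization. Once this is in hand, combining $(c_{L/k})_* c^*_{L/k} Y \iso Y$ with the fact that $(c_{L/k})_*$ evaluated at $G/H$ is $\spec(L^H)$-sections yields the equivariant equivalence $\eta_Y$, and the adjunction identity
$
[c^*_{L/k} X, c^*_{L/k} Y]_{\MS(k)} = [X, \R (c_{L/k})_* \L c^*_{L/k} Y]_{G\mathrm{sSet}_\bullet} = [X, Y]_{G\mathrm{sSet}_\bullet}
$
delivers full and faithfulness.
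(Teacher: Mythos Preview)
Your Quillen pair argument is fine; it is the dual of the paper's check (the paper verifies that the right adjoint preserves global fibrations and weak equivalences). One small imprecision: the images you produce are generating (acyclic) cofibrations for the \emph{global} closed flasque structure, not the motivic one, but since the latter is a left Bousfield localization of the former this suffices.

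For full faithfulness, your strategy via the derived unit is essentially equivalent to the paper's, but your execution of the ``main obstacle'' has a real gap. You claim it is enough that $c^*_{L/k} Y$ satisfy the descent and $\A^1$-invariance conditions ``when restricted to the full subcategory of finite \'etale $k$-algebras split by $L$'', and you argue Nisnevich descent only for distinguished squares of such schemes. This is not enough: motivic fibrant replacement is a global operation on all of $\sm/k$, and a presheaf that fails descent somewhere on the site can have its sections at $\spec(L^H)$ changed by fibrant replacement, regardless of how well-behaved it is on the subcategory you care about. Your cellular induction for $\A^1$-invariance is similarly beside the point.

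What actually makes this work---and what the paper uses---is a much sharper observation valid for \emph{any} $A$: by \aref{rem:useful}, the value $(c^*_{L/k}A)(X)$ on a connected $X$ is $A^H$ for a subgroup $H$ determined solely by the set $\Hom_k(X,\spec(L^{H'}))$; since $\spec(L^{H'})$ is finite \'etale over $k$, this depends only on the generic point of $X$. Hence $(c^*_{L/k}A)(X) = (c^*_{L/k}A)(U)$ for every dense open $U\subseteq X$. This single statement gives strict $\A^1$-invariance and strict Nisnevich excision on the \emph{entire} site $\sm/k$, with no induction. Consequently, if $A$ is a fibrant $G$-simplicial set then $c^*_{L/k}(A)$ is already motivically fibrant, and one compares mapping spaces directly:
\[
[S^n\wedge c^*_{L/k}B,\,c^*_{L/k}A]_k
=\pi_n\,\ul{\Hom}_{\MS(k)}(c^*_{L/k}B,\,c^*_{L/k}A)
=\pi_n\,\ul{\Hom}_{G\ssetb}(B,A)
=[S^n\wedge B,\,A]_G,
\]
the middle equality being the (underived) full faithfulness of $c^*_{L/k}$ coming from the Galois correspondence. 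This is both simpler and avoids having to reason about what fibrant replacement does to a non-fibrant $c^*_{L/k}Y$.
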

\begin{proof}
 Note that under the identification $\spre_{\bullet}(\Or_{G})=G\ssetb$, the projective model structure on simplicial presheaves corresponds to the usual model structure on based $G$-simplicial sets.   The functor $(c_{L/k})_*$  preserves global weak equivalences and global fibrations and so this pair is a Quillen pair on the global closed flasque model structure. It follows immediately that this is a Quillen pair on the motivic model structure as well.

 Using the description of $c^*_{L/k}$ in the previous remark, one sees the following simple facts about $c^*_{L/k}(A)$. If $A$ is fibrant then $c^*_{L/k}(A)(X)$ is fibrant for any $X$ and $c^*_{L/k}(A)$ is $\A^1$-homotopy invariant. If $U\subseteq X$ is a dense open subscheme, then $c^*_{L/k}(A)(X) = c^*_{L/k}(A)(U)$. It is thus easy to see that $c^*_{L/k}(A)$ satisfies Nisnevich descent. 
 Moreover, for $G$-simplicial sets $A$ and $B$,  we have an equality of simplicial mapping spaces, $\ul{\Hom}_{\MS(k)}(c^*_{L/k}(B), c^*_{L/k}(A)) = \ul{\Hom}_{G\ssetb}(B,A)$.
 Now if $B$ is cofibrant and $A$ is fibrant, then we have
\[
  [S^{n}\wedge c^*_{L/k}(B), c^*_{L/k}(A)]_{k}= 
\pi_{n}\ul{\Hom}_{\MS(k)}(c^*_{L/k}(B), c^*_{L/k}(A))
\]
and $[B,A]_{G}= \pi_{n}\ul{\Hom}_{G\ssetb}(B,A) $ from which the second statement follows.
 \end{proof}

Write $S^{G} = (S^1)^{\wedge G}$ for the $G$-simplicial set consisting of the $|G|$-fold smash product of $S^1$ equipped with the obvious permutation action by $G$. Note also that this is the simplicial representation sphere associated to the regular representation of $G$. The stable model structure on $\sspt_{S^{G}}(G):=\sspt_{S^{G}}(G\ssetb)$ obtained from \cite{Hovey:spectra}  agrees with that constructed in \cite{Mandell:Gsymmetric}. In turn, as shown in \emph{loc.~cit.}, the associated homotopy category is tensor triangulated equivalent to the genuine $G$-equivariant homotopy category as constructed in \cite{LMS}.

To simplify notation below, we sometimes denote the motivic space $c^*_{L/k}(S^{G})$ by $S^{G}$.
Consider the category $\sspt_{S^{G},\P^1}(k)$ of
motivic $(c^*_{L/k}(S^{G}),\P^1_k)$-bispectra. This is a  model for the stable motivic homotopy category $\SH_{k}$. Indeed, by \cite[Theorem 3.5]{Hu:base} the motivic space 
$c^*_{L/k}(S^{G})$ is invertible in $\SH_{k}$. In particular, by \cite[Theorem 9.1]{Hovey:spectra}, the suspension spectrum functor 
$$
\Sigma^{\infty}_{S^G}:\sspt_{\P^1}(k)\to \sspt_{\P^1,S^G}(k)
$$
is a left Quillen equivalence and induces a tensor triangulated equivalence on the associated stable homotopy categories.

By \aref{lem:functoriality}, the Quillen adjoint pair (\ref{eqn:unstablepair})
induces a Quillen pair $\sspt_{S^{G}}(G) \rightleftarrows \sspt_{S^{G}}(k)$. Combined with the suspension spectrum functor, we have the composite Quillen adjunction
\begin{equation*}
\sspt_{S^G}(G)\rightleftarrows \sspt_{S^{G}}(k)
\rightleftarrows  \sspt_{S^{G},\P^1}(k).
\end{equation*}
We have thus obtained the desired stabilization of $c^*_{L/k}$.
\begin{theorem}\label{thm:construction}
The Galois correspondence (\ref{eqn:galoiscorrespondence}) induces 
an adjoint pair
$$
\L c^*_{L/k}:\SH_{G}\rightleftarrows \SH_{k} :\R(c_{L/k})_*
$$
of triangulated stable homotopy categories. The left adjoint is strong symmetric monoidal. 
\end{theorem}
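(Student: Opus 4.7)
The plan is to assemble the stable adjunction by combining three ingredients already laid out in the preceding discussion: the unstable Quillen adjunction of \aref{lemma:uff}, the prolongation procedure of \aref{lem:functoriality}, and the $\P^1$-to-$(\P^1,S^G)$ bispectrum equivalence coming from Hu's invertibility theorem. The skeleton of the composite is
\[
  \sspt_{S^G}(G) \;\rightleftarrows\; \sspt_{S^G}(k) \;\rightleftarrows\; \sspt_{S^G,\P^1}(k) \;\simeq\; \SH_k,
\]
so I just need to justify that each arrow is a Quillen adjunction (or equivalence) and that the leftward composite is strong symmetric monoidal.

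First I would verify the hypotheses of \aref{lem:functoriality} for the pair $(c^*_{L/k},(c_{L/k})_*)$ with suspension objects $K = S^G$ in $G\ssetb$ and $K' = c^*_{L/k}(S^G)$ in $\MS(k)$. The unstable functor $c^*_{L/k}$, being left Kan extension along $c_{L/k}\colon \Or_G\to \Sm/k$ out of a presheaf category with the smash-of-pointed-sets monoidal structure, is strong symmetric monoidal; in particular there is a canonical natural isomorphism $\tau\colon c^*_{L/k}(-)\wedge c^*_{L/k}(S^G)\xrightarrow{\iso} c^*_{L/k}(-\wedge S^G)$. The $\Sigma_p$-equivariance of its iterate $\tau^p$ amounts to the statement that the symmetry of $(S^G)^{\wedge p}$ (which commutes with the diagonal $G$-action permuting factors of $S^1$ inside each $S^G$) is preserved by $c^*_{L/k}$, and this is automatic from strong symmetric monoidality. \aref{lem:functoriality} then yields a Quillen adjoint pair $\sspt_{S^G}(G)\rightleftarrows \sspt_{S^G}(k)$ whose left adjoint is strong symmetric monoidal.

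Next I would invoke \cite[Theorem~3.5]{Hu:base} to conclude that $c^*_{L/k}(S^G)$ is invertible in $\SH_k$, so by \cite[Theorem~9.1]{Hovey:spectra} the further $S^G$-stabilization
\[
  \Sigma^{\infty}_{S^G}\colon \sspt_{\P^1}(k)\;\rightleftarrows\; \sspt_{S^G,\P^1}(k)\;{:}\;\Omega^{\infty}_{S^G}
\]
is a strong symmetric monoidal Quillen equivalence, with associated homotopy category $\SH_k$. Composing gives a Quillen pair between $\sspt_{S^G}(G)$ and $\sspt_{S^G,\P^1}(k)$ whose left adjoint is strong symmetric monoidal. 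Since $\Ho(\sspt_{S^G}(G))$ is tensor triangulated equivalent to the genuine $G$-equivariant category $\SH_G$ by \cite{Mandell:Gsymmetric}, and since strong symmetric monoidal left Quillen functors descend to strong symmetric monoidal functors on homotopy categories, taking total derived functors produces the desired adjunction $\L c^*_{L/k}\dashv \R(c_{L/k})_*$ with strong symmetric monoidal left adjoint.

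The main subtle point, where I would spend the most care, is the verification of the $\Sigma_p$-equivariance of $\tau^p$ in \aref{lem:functoriality}: one must keep track simultaneously of the symmetric-group action permuting smash factors (which drives the symmetric-spectrum machinery) and the internal $G$-action on each copy of $S^G$ coming from the regular representation. Everything else --- the identification with $\SH_k$, the passage to homotopy categories, and the fact that the composition of strong symmetric monoidal left Quillen functors is again strong symmetric monoidal --- is formal once this compatibility is in hand.
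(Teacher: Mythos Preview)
Your proposal is correct and follows precisely the route the paper takes: the discussion immediately preceding the theorem already assembles the composite Quillen adjunction $\sspt_{S^G}(G)\rightleftarrows \sspt_{S^G}(k)\rightleftarrows \sspt_{S^G,\P^1}(k)$ via \aref{lem:functoriality} and identifies the target with $\SH_k$ using Hu's invertibility result together with \cite[Theorem~9.1]{Hovey:spectra}. Your explicit verification that the unstable $c^*_{L/k}$ is strong symmetric monoidal and that the iterates $\tau^p$ are $\Sigma_p$-equivariant fills in details the paper leaves to the reader, but there is no substantive difference in strategy.
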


\subsection{Equivariant Betti realization}\label{sub:betti}
An unstable $C_2$-equivariant Betti realization functor is constructed for the motivic homotopy category over fields admitting a real embedding in \cite{MV:A1}, see also \cite{DI:hyp}. It is well known that this construction stabilizes to yield a $C_2$-equivariant Betti realization functor. Following the construction of \cite{PPR:KGL} in the complex case, we record here the construction of the stable equivariant Betti realization as a Quillen functor.

Write $(-)^{an}:\Sm/\R\to C_{2}\btop$ for the functor given by 
$X\mapsto X(\C)^{an}_+$, where $X(\C)$ is equipped with the involution given by conjugation. It extends to an adjoint pair 
$$
\RRe^{C_2}_{B}:\MS(\R)\rightleftarrows C_2\btop:\sing^{C_2}_{B}.
$$ 
The left adjoint $\RRe^{C_2}_{B}$ is defined by the usual left Kan extension formula and the  right adjoint $\sing^{C_2}_{B}$ is defined by $\sing^{C_2}_{B}(K)(X) = \ul{\Hom}_{C_2\btop}(X(\C)_+, K)$.

\begin{proposition} 
 The adjoint pair
$
 \RRe^{C_2}_{B}:\MS(\R)\rightleftarrows C_2\btop:\sing^{C_2}_{B}
 $
 is a Quillen adjoint pair. Moreover $\RRe^{C_2}_{B}$ is strong symmetric monoidal.
\end{proposition}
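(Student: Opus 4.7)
The plan is to verify the Quillen pair condition on the global closed flasque motivic model structure first, then pass to the motivic localization using the universal property of left Bousfield localization, and finally note the monoidality.

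First, I would check Quillen on the global closed flasque structure by showing that $\RRe_B^{C_2}$ sends the generating (acyclic) cofibrations $I^{gf}$ and $J^{gf}$ to (acyclic) cofibrations of $C_2$-spaces. For a finite collection $\mcal{Z} = \{Z_i\hookrightarrow X\}$ of closed immersions of smooth $k$-schemes, analytification gives closed $C_2$-equivariant embeddings of $C_2$-CW complexes (since smooth varieties over $\R$ analytify to $C_2$-manifolds, and closed subvarieties to closed equivariant submanifolds, which have $C_2$-equivariant CW-structures that fit together compatibly). Then $(\cup\mcal{Z})^{an}\hookrightarrow X^{an}$ is a $C_2$-cofibration, and the pushout-product with any cofibration (resp.\ trivial cofibration) of simplicial sets remains a cofibration (resp.\ trivial cofibration) of $C_2$-spaces since $|\partial\Delta^n|_+\to|\Delta^n|_+$ and $|\Lambda^n_j|_+\to|\Delta^n|_+$ already have these properties with trivial $C_2$-action. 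This handles $I^{gf}$ and $J^{gf}$, so $\RRe_B^{C_2}$ is left Quillen on $\MS(\R)_{gf}$.

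Next, I would pass to the motivic closed flasque model structure. By the universal property of left Bousfield localization, it suffices to show that $\RRe_B^{C_2}$ sends the localizing set $S = \{P_{\mcal{Q}}\to X\}\cup\{W\times\A^1\to W\}$ to weak equivalences in $C_2\btop$. For the $\A^1$-maps, $\A^1_{\R}(\C)^{an} = \C$ equipped with complex conjugation is $C_2$-equivariantly contractible (it deformation retracts $C_2$-equivariantly to $\R$ and then to the origin), so the projection $(W\times\A^1)^{an}\to W^{an}$ is a $C_2$-homotopy equivalence. For a distinguished Nisnevich square $\mcal{Q}$, the analytified square is a pullback of $C_2$-spaces in which $p^{an}$ restricts to a $C_2$-equivariant homeomorphism over $(X\smallsetminus A)^{an}$, so it is a pushout square of $C_2$-spaces along the closed $C_2$-cofibration $A^{an}\hookrightarrow X^{an}$; since $B^{an}\hookrightarrow Y^{an}$ is also a closed $C_2$-cofibration, this pushout agrees with the $C_2$-equivariant homotopy pushout, yielding a weak equivalence $P_{\mcal{Q}}^{an}\to X^{an}$.

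Finally, strong symmetric monoidality of $\RRe_B^{C_2}$ follows from the fact that analytification commutes with finite products, $(X\times_{\spec\R} Y)(\C)^{an} = X(\C)^{an}\times Y(\C)^{an}$ as $C_2$-spaces (with the diagonal conjugation action), and sends $\spec(\R)$ to a point. The resulting natural isomorphism on the categorical product extends to smash products on based objects, and the coherence isomorphisms for associativity, unit, and symmetry are induced by those on the analytification functor.

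The main obstacle here is checking that the analytification of a Nisnevich square really is a $C_2$-equivariant homotopy pushout. This requires verifying the cofibration property of closed immersions on the analytic side (for which one uses the existence of $C_2$-equivariant tubular neighborhoods of closed smooth equivariant submanifolds in smooth equivariant manifolds) and identifying the resulting strict pushout with the homotopy pushout. Everything else is formal or routine.
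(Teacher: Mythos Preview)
Your proposal is correct and follows essentially the same strategy as the paper: verify left Quillen on the global closed flasque structure via the generating (acyclic) cofibrations, then pass to the motivic localization by checking that $\A^1$-projections and distinguished Nisnevich squares realize to equivariant equivalences and equivariant homotopy pushouts, with monoidality being formal. The only notable difference is one of emphasis: the paper makes the computation of $\RRe^{C_2}_B(\cup\mcal{Z})$ explicit as a coequalizer and invokes Illman's equivariant triangulation theorem to see it is an equivariant subcomplex of $X(\C)$, whereas you phrase this informally as ``CW-structures that fit together compatibly''; conversely, the paper dispatches the Nisnevich pushout step by citation while you sketch it via tubular neighborhoods.
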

\begin{proof}
First we show that this is a Quillen pair on global closed flasque model structures. For this we check that $\RRe^{C_{2}}_{B}$ sends generating closed cofibrations to cofibrations in $C_{2}\btop$ and sends generating global  trivial closed fibrations to trivial cofibrations.
Note that $\RRe^{C_{2}}_{B}$ preserves pushout products. It thus suffices to show  that $\RRe^{C_2}_{B}(\cup\mcal{Z}_+)\to \RRe^{C_{2}}_{B}(X_+)$ is a cofibration for any finite collection $\mcal{Z}= \{Z_{i}\hookrightarrow X\}$ of closed immersions in $\Sm/\R$. 

Note that $\RRe^{C_{2}}_{B}(\cup \mcal{Z})$  is the coequalizer of $\coprod Z_{i}(\C)\times_{X(\C)} Z_{j}(\C) \rightrightarrows \coprod Z_{i}(\C)$ in in $C_2\btop$. One may equivariantly triangulate $X(\C)$ such that each 
$Z_{i}(\C)$ is an equivariant subcomplex and $Z_{i}(\C)\times_{X(\C)}Z_{j}(\C)$ is an equivariant subcomplex for each $j$, see, \emph{e.g.}, \cite{Illman}. 
It follows that $\RRe^{C_2}_{B}(\cup \mcal{Z})\to X(\C)$ is the inclusion of an equivariant subcomplex. In particular, it is an equivariant cofibration. It follows that $\RRe^{C_{2}}_{B}$ is a left Quillen functor on the global closed flasque model structure.

Note that $\RRe^{C_2}_{B}$ sends a distinguished Nisnevich square to
an equivariant homotopy pushout square, see, \emph{e.g.}, \cite{DI:hyp}. Also $\RRe^{C_2}_{B}(X\times \A^{1}) \to \RRe^{C_{2}}_{B}(X)$ is an equivariant homotopy equivalence. It follows that the adjoint pair of the proposition 
induces a Quillen pair in the closed flasque motivic structure as well.
\end{proof}

Recall that we write $S^{\sigma}$ for the sign representation sphere. 
\begin{proposition}
 The above adjoint pair extends to a Quillen adjoint pair 
 $$
 \RRe^{C_2}_{B}:\sspt_{\P^1}(\R) \rightleftarrows \sspt_{S^{1+\sigma}}(C_{2}):\sing^{C_2}_{B}
 $$
 on stable model categories. Moreover $\RRe_{B}^{C_{2}}$ is strong symmetric monoidal.
\end{proposition}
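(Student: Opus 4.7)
The plan is to deduce the statement as an immediate application of \aref{lem:functoriality}, so the work consists of verifying the hypotheses of that lemma. The base model categories $\MS(\R)_{cf}$ (closed flasque motivic) and the standard equivariant model structure on $C_2\btop$ are both left proper, cellular, and symmetric monoidal with cofibrant domains for their generating cofibrations; the suspension objects $\P^1$ (in $\MS(\R)_{cf}$) and $S^{1+\sigma}$ (in $C_2\btop$) are cofibrant in the respective structures. The unstable Quillen adjunction $\RRe^{C_2}_B\dashv\sing^{C_2}_B$ with $\RRe^{C_2}_B$ strong symmetric monoidal has already been established in the preceding proposition.

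The key input is a natural isomorphism $\tau:\RRe^{C_2}_B(-)\wedge S^{1+\sigma}\xrightarrow{\iso}\RRe^{C_2}_B(-\wedge\P^1)$ whose iterates are $\Sigma_p$-equivariant. I would construct $\tau$ as the composite of the monoidal structure isomorphism $\RRe^{C_2}_B(-)\wedge\RRe^{C_2}_B(\P^1)\iso\RRe^{C_2}_B(-\wedge\P^1)$ with a chosen $C_2$-equivariant homeomorphism $S^{1+\sigma}\iso\RRe^{C_2}_B(\P^1)$. The latter is obtained by identifying $\P^1(\C)$, pointed at $\infty$ and equipped with complex conjugation, with the one-point compactification of $\C=\R\oplus i\R$, where $C_2$ acts trivially on the first summand and by sign on the second; this identifies $\RRe^{C_2}_B(\P^1)$ with the one-point compactification of $1\oplus\sigma$, namely $S^{1+\sigma}$.

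For the $\Sigma_p$-equivariance of the iterated map $\tau^p:\RRe^{C_2}_B(X)\wedge (S^{1+\sigma})^{\wedge p}\iso\RRe^{C_2}_B(X\wedge (\P^1)^{\wedge p})$, I would argue from the coherence axioms of a strong symmetric monoidal functor: the symmetry isomorphism in $C_2\btop$ permuting copies of $S^{1+\sigma}$ corresponds under $\tau^p$ to $\RRe^{C_2}_B$ applied to the corresponding permutation of copies of $\P^1$ in $\MS(\R)$, precisely because $\RRe^{C_2}_B$ is strong \emph{symmetric} monoidal (so it commutes with the braidings), and the chosen identification $S^{1+\sigma}\iso\RRe^{C_2}_B(\P^1)$ is a map of $C_2$-spaces used uniformly on each factor.

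With $\tau$ in hand, \aref{lem:functoriality} immediately produces the prolongation $\Sp(\RRe^{C_2}_B)\dashv\Sp(\sing^{C_2}_B)$ as a Quillen pair on the stable model categories of symmetric spectra, and the same lemma records that $\Sp(\RRe^{C_2}_B)$ remains strong symmetric monoidal because $\RRe^{C_2}_B$ was. The main obstacle is the equivariance check for $\tau^p$; everything else is a bookkeeping exercise in combining the previous proposition with the Hovey–style stabilization machinery recorded in \S\ref{sub:stablemodels}.
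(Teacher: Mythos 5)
Your proposal is correct and follows the same route as the paper's proof, which simply cites \aref{lem:functoriality} together with the identification $\RRe^{C_2}_{B}(\P^1)=S^{1+\sigma}$; you have merely spelled out the details (the explicit $C_2$-equivariant homeomorphism, the construction of $\tau$ from the monoidal structure, and the $\Sigma_p$-equivariance via symmetric monoidal coherence) that the paper leaves implicit.
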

\begin{proof}
 This follows immediately from \aref{lem:functoriality}, noting that $\RRe^{C_2}_{B}(\P^1) = S^{1+\sigma}$.
\end{proof}

Now if $k$ is a field and $\phi:k\hookrightarrow \R$ is a real embedding then the associated $C_2$-equivariant Betti realization $\RRe^{C_{2}}_{B,\phi}$ is defined to be the composite
$$
\RRe^{C_{2}}_{B,\phi}:= \phi^*\circ\RRe^{C_{2}}_{B}:\SH_{k}\to \SH_{\R}\to \SH_{C_{2}}.
$$

\subsection{Comparing change of group and change of base functors}\label{sub:change}
It is useful to know  that the comparison functors between equivariant and motivic homotopy theory suitably intertwine the standard change of group and change of base functors. We fix as above a Galois extension $L/k$ with Galois group $G$.
Let $H\subseteq G$ be a subgroup and write $K = L^{H}$ for the corresponding fixed subfield. We denote the corresponding map of schemes by $p:\spec(K)\to \spec(k)$. As with any map of schemes we have an induced adjoint pair of functors of motivic spaces 
$p^*:\MS(k)\rightleftarrows \MS(K):p_*$. Since $p$ is smooth, the functor $p^*$ has as well a left adjoint $p_{\#}$, induced by 
the functor $\Sm/K\to \sm/k$ which composes the structure map of a $K$-scheme with $p$.

\begin{lemma}
 The adjoint pairs $(p_{\#},p^*)$ and $(p^*, p_*)$ are Quillen adjoint pairs.
\end{lemma}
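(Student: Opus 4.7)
The plan is to verify the Quillen conditions at the level of the global closed flasque model structure first and then check that the resulting Quillen pairs descend through the Bousfield localization defining the motivic closed flasque model structure. The key geometric input is that $p:\spec(K)\to \spec(k)$ is \'etale (in particular flat and smooth), so base change along $p$ preserves closed immersions, open immersions and \'etale morphisms, and composition with $p$ preserves smoothness.

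For $(p^{*},p_{*})$, by adjunction it suffices to show that $p^{*}$ sends elements of $I^{gf}$ and $J^{gf}$ to cofibrations (respectively acyclic cofibrations). Each such generator has the form $f_{+}\square\, g_{+}$ with $f:\cup\mcal{Z}\to X$ a finite union of closed immersions in $\Sm/k$ and $g$ a generating (acyclic) cofibration of simplicial sets. Since $p^{*}$ commutes with pushout products and acts as the identity on the simplicial factor, one only has to observe that $p^{*}(\cup\mcal{Z}\hookrightarrow X)$ is again of the same form, which holds because flat base change preserves closed immersions and their finite categorical unions. For $(p_{\#},p^{*})$, the functor $p_{\#}$ is induced by composing the structure map of a smooth $K$-scheme with $p$; a closed immersion in $\Sm/K$ remains a closed immersion after this restriction of scalars, so $p_{\#}$ also preserves the generators of $I^{gf}$ and $J^{gf}$.

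To pass to the motivic closed flasque model structure, by general Bousfield localization theory it suffices to verify that the left adjoints $p^{*}$ and $p_{\#}$ send the localizing maps $P_{\mcal{Q}}\to X$ (for distinguished Nisnevich squares $\mcal{Q}$) and $W\times\A^{1}\to W$ to motivic equivalences, and it is enough to show each preserves these maps on the nose. For $p^{*}$, the pullback of a distinguished Nisnevich square along $p$ is again a distinguished Nisnevich square, since the \'etaleness of the vertical map, the open-immersion property of the horizontal one, and the isomorphism condition on reduced complements are all stable under flat base change; compatibility with $\A^{1}$-projections is immediate from $\A^{1}_{K}=p^{*}\A^{1}_{k}$. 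For $p_{\#}$, a distinguished Nisnevich square in $\Sm/K$ remains one in $\Sm/k$ after composition with $p$, and $p_{\#}(Y\times_{K}\A^{1}_{K})$ is identified with $p_{\#}(Y)\times_{k}\A^{1}_{k}$. Both adjunctions therefore remain Quillen after localization.

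The proof is essentially bookkeeping; no single step is difficult. The main thing to be careful about is the stability of distinguished Nisnevich squares under $p^{*}$ and $p_{\#}$, which rests squarely on the flatness/\'etaleness of $p$. Given this, preservation of generating cofibrations at the global level and of the localizing set at the motivic level are both routine, and the lemma follows.
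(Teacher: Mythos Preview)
Your argument is correct and follows essentially the same route as the paper: verify the Quillen conditions at the global closed flasque level by checking preservation of generating (acyclic) cofibrations, and then descend through Bousfield localization by checking that the left adjoints send Nisnevich squares and $\A^{1}$-projections to the corresponding maps over the other base. The only notable difference is that the paper handles $(p^{*},p_{*})$ by citing \cite{PPR:KGL}, whereas you give the direct argument; conversely, for $p_{\#}$ the paper makes explicit the projection formula $p_{\#}(X\wedge M_{+})\cong (p_{\#}X)\wedge M_{+}$ for constant $M$, which is what underlies your claim that $p_{\#}$ ``preserves the generators'' (since $p_{\#}$ need not preserve smash products in general, this is the point that makes the pushout-product argument go through).
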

\begin{proof}
 That $p^*$ is a left Quillen adjoint on the motivic closed flasque model structure is verified in \cite{PPR:KGL}. 
 Note that $p_{\#}$ preserves generating global closed flasque cofibrations and acyclic cofibrations.
 This is seen by noting that  if $M$ is a simplicial set, then we have that $p_{\#}(X\wedge M_+) = (p_{\#} X) \wedge M_+$ and since $p_{\#}$ preserves colimits, it preserves pushouts and since it also preserves closed inclusions of smooth schemes, the claim follows. This implies that $p_{\#}$ is a left Quillen functor on global closed flasque model structures.
 The functor $p_{\#}$ sends Nisnevich distinguished squares to
  Nisnevich distinguished squares. Furthermore  
  $p_\#(X\times_{K}\A^{1}_{K})\to p_{\#}(X)$ is identified with 
  $p_{\#}(X)\times_{k}\A^{1}_{k}\to p_{\#}(X)$. 
  It follows that $p_{\#}$ is also a left Quillen functor on the closed flasque motivic
  model structure.
 \end{proof}

We have the commutative diagram of categories
$$
\xymatrix{
\Or_{H} \ar[d]_{j}\ar[r]^-{c_{L/K}} & \sm/K \ar[d]^-{p_{\#}} \\
\Or_{G} \ar[r]^-{c_{L/k}} & \sm/k,
}
$$
where $j$ sends the orbit $H/H'$ to the orbit $G/H'$. 
Under the identification  $\spre_{\bullet}(\Or_{G}) = G\ssetb$, the adjoint pair 
$(j^*, j_*)$ is identified with the adjoint pair $(\ind^{G}_{H}, \res^{G}_{H})$ where $\ind^{G}_{H}(X) = G\times_{H} X$ and $\res^{G}_{H}(W)$ is $W$ with $H$-action given by restricting the $G$-action. 
The above square thus induces a commutative diagram of Quillen adjoint
functors (where we omit the labels for the horizontal right adjoints for typographical reasons) 
\begin{equation}\label{eqn:zwei}
\xymatrix{
H\ssetb
\ar@<+.7ex>[r]^-{c_{L/K}^*}\ar@<-.7ex>[d]_{\ind^{G}_{H}} 
& \MS(K) 
\ar@<-.7ex>[d]_{p_{\#}} \ar@<+.7ex>[l]^-{}
\\
G\ssetb 
\ar@<-.7ex>[u]_{\res^{G}_{H}}\ar@<+.7ex>[r]^-{c_{L/k}^*} 
& \MS(k) 
\ar@<-.7ex>[u]_{p^*}\ar@<+.7ex>[l]^-{}.
}
\end{equation}

We write $\HH_{\bullet, G}$ for the homotopy category of based $G$-spaces and $\HH_{\bullet, k}$ for the unstable motivic homotopy category.

\begin{proposition}\label{prop:unstable2}
The diagrams of homotopy categories
$$
\xymatrix{
\HH_{\bullet, H} \ar[r]^{\L c^*_{L/K}} & \HH_{\bullet, K}
\\
 \HH_{\bullet, G} \ar[u]^-{\R \res^{G}_{H}} \ar[r]^{\L c^*_{L/k}} & \HH_{\bullet, k} \ar[u]_-{ \R p^*} 
 }
\;\;\;\;\;\text{and}\;\;\; 
\xymatrix{
 \HH_{\bullet, H} \ar[d]_{\L \ind^{G}_{H}} \ar[r]^{\L c^*_{L/K}} & \HH_{\bullet, K} \ar[d]^{\L p_{\#}} \\
 \HH_{\bullet, G} \ar[r]^{\L c^*_{L/k}} & \HH_{\bullet, k}.
 }
 $$ 
induced by (\ref{eqn:zwei}), commute up to natural isomorphism.
\end{proposition}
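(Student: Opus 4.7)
The plan is to reduce both statements to the strict commutativity of the underlying diagram \eqref{eqn:zwei} at the model category level. First I would exhibit two relevant strict equalities of point-set functors. The commuting square of categories $c_{L/k}\circ j = p_\#\circ c_{L/K}$ displayed just before \eqref{eqn:zwei} prolongs via left Kan extension to the strict equality of left adjoints $c^*_{L/k}\circ\ind^G_H = p_\#\circ c^*_{L/K}$ on $H\ssetb$. By unraveling the Galois decomposition $L^{H'}\otimes_k L^H \iso \prod_{g\in H\backslash G/H'} L^{H\cap gH'g^{-1}}$, one obtains the parallel strict identity $c^*_{L/K}\circ\res^G_H = p^*\circ c^*_{L/k}$ on $G\ssetb$.

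The right-hand (all left adjoint) square then becomes immediate: all four functors in it are left Quillen, so intermediate objects in each composition are cofibrant and the total left derived functor of a composition of left Quillen functors is the composition of their individual left derived functors. Applied to both sides of the first strict equality above, this produces the natural isomorphism $\L c^*_{L/k}\circ\L\ind^G_H \iso \L p_\#\circ\L c^*_{L/K}$.

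The left-hand square mixes left and right derived functors, and this is where the main subtlety lies. The approach I would take is to observe that $\res^G_H$ and $p^*$ are each simultaneously left and right Quillen: $\res^G_H$ sits in an adjoint triple $(\ind^G_H,\res^G_H,\coind^G_H)$ in which both adjacent pairs are Quillen adjunctions, and $p^*$ sits in the adjoint triple $(p_\#,p^*,p_*)$ by the preceding lemma together with the standard base-change Quillen pair $(p^*,p_*)$. Granted this, both functors preserve all cofibrations, fibrations, and---by the factorization axiom combined with two-out-of-three---all weak equivalences, so $\R p^*$ and $\R\res^G_H$ may be computed on arbitrary representatives. Given a cofibrant replacement $X^c\xrightarrow{\wkeq} X$ in $G\ssetb$, the strict identity $c^*_{L/K}\circ\res^G_H = p^*\circ c^*_{L/k}$ yields
\[
(\R p^*)(\L c^*_{L/k})(X) \;=\; p^* c^*_{L/k}(X^c) \;=\; c^*_{L/K}(\res^G_H(X^c));
\]
since $\res^G_H(X^c)$ is cofibrant (as $\res^G_H$ is left Quillen) and weakly equivalent to $\res^G_H(X)$, it is itself a cofibrant replacement of $\res^G_H(X)$, and Ken Brown's lemma applied to the left Quillen functor $c^*_{L/K}$ identifies the right-hand side above with $(\L c^*_{L/K})(\R\res^G_H)(X)$ up to natural weak equivalence, with naturality in $X$ giving the desired natural isomorphism of functors. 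The main obstacle is the verification of the double-Quillen property for $p^*$ on the closed flasque motivic model structure, which ultimately reduces to checking that $p^*$ respects the localizing set of Nisnevich distinguished squares and $\A^1$-projections.
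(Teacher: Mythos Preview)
Your proposal is correct and follows essentially the same route as the paper: both reduce the second square to the strict equality of left Quillen functors coming from the commuting diagram before \eqref{eqn:zwei}, and both handle the first square by combining the strict identity $p^*c^*_{L/k}=c^*_{L/K}\res^G_H$ with the observation that $p^*$ and $\res^G_H$ are simultaneously left and right Quillen (so $\R p^*\simeq\L p^*$ and $\R\res^G_H\simeq\L\res^G_H$). Your version is simply more explicit---you spell out the Mackey/Galois decomposition behind the strict identity and invoke Ken Brown to justify the derived comparison, whereas the paper compresses these to ``a direct inspection'' and the equivalence of left and right derived functors; also note that the double-Quillen status of $p^*$ is already in hand from the lemma immediately preceding this proposition, so it is not really an outstanding obstacle.
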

\begin{proof}
 The commutativity of the second diagram follows immediately from the fact that (\ref{eqn:zwei}) commutes and that the adjoint pairs there are Quillen pairs. 
    A direct inspection yields the equality of functors
 $p^*c^*_{L/k} = c^*_{L/K}\res^{G}_{H}$. The commutativity of the first diagram follows 
 since $p^*$ and $\res^{G}_{H}$ are also  left Quillen functors and so  $\R p^* \wkeq \L p^*$ and $\R\res^{G}_{H} \wkeq \L \res^{G}_{H}$.
  \end{proof}

As an $H$-simplicial set $S^{G}$ is isomorphic to the $[G:H]$-fold smash product of $S^{H}$. 
This implies that $c^*_{L/K}(S^{G})= c^{*}_{L/k}(S^{H})^{\wedge [G:H]}$. We set $d:=[G:H]$ and write $S^{dH} = (S^{H})^{\wedge d}$ below.

\begin{proposition}\label{prop:stablecompats}
The adjoint pairs (\ref*{eqn:zwei}) induce diagrams of stable homotopy categories
$$
\xymatrix{
\SH_{H} \ar[r]^{\L c^*_{L/K}} & \SH_{K}
\\
 \SH_{G} \ar[u]^-{\R \res^{G}_{H}} \ar[r]^{\L c^*_{L/k}} & \SH_{k} \ar[u]_-{\R p^*} 
 }
\;\;\;\;\;\text{and}\;\;\;
\xymatrix{
 \SH_{H} \ar[d]_{\L \ind^{G}_{H}} \ar[r]^{\L c^*_{L/K}} & \SH_{K} \ar[d]^{\L p_{\#}} \\
 \SH_{G} \ar[r]^{\L c^*_{L/k}} & \SH_{k}
 }
 $$

 which commute up to natural isomorphism.
\end{proposition}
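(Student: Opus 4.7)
The strategy is to apply \aref{lem:functoriality} to prolong the unstable Quillen adjunctions in diagram (\ref{eqn:zwei}) to stable model categories of symmetric spectra, and then deduce stable commutativity from the unstable commutativity already established in \aref{prop:unstable2}. First one must sort out the suspension coordinates. For $\SH_H$ we use the model $\sspt_{S^H}(H\ssetb)$, while the restriction of a $G$-symmetric spectrum built on $S^G$ is naturally an $H$-symmetric spectrum built on $S^G = (S^H)^{\wedge d}$ where $d=[G:H]$. Since $S^H$ is already invertible in $\SH_H$, Hovey's results \cite{Hovey:spectra} imply that the suspension functor $\sspt_{S^H}(H\ssetb)\to \sspt_{S^{dH}}(H\ssetb)$ is a Quillen equivalence, and analogously on the motivic side one has $\sspt_{\P^1}(k)\xrightarrow{\wkeq}\sspt_{S^G,\P^1}(k)$.

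Next, I would verify the required natural isomorphism $\tau$ of \aref{lem:functoriality} for each of the four Quillen pairs in (\ref{eqn:zwei}). For $(\ind^G_H,\res^G_H)$ this is the observation that $\ind^G_H(X\wedge S^{dH})\iso \ind^G_H(X)\wedge S^G$ with the requisite symmetric-group equivariance, since $S^G\iso G_+\wedge_{H} S^{dH}$. For $(p_\#,p^*)$ the isomorphism $p_\#(X\wedge p^*Y)\iso p_\#(X)\wedge Y$ is standard. For the strong symmetric monoidal functors $c^*_{L/k}$ and $c^*_{L/K}$, the isomorphism $\tau$ is a consequence of monoidality together with the identification $c^*_{L/k}(S^G)\iso c^*_{L/K}(S^{dH})$ viewed appropriately. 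With $\tau$ in hand, each adjunction prolongs to a Quillen adjunction of stable model categories.

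For the first diagram, the key observation is that all four functors $\res^G_H$, $p^*$, $c^*_{L/k}$, $c^*_{L/K}$ are simultaneously left and right Quillen (as noted in the proof of \aref{prop:unstable2}), so their derived functors agree with their underived counterparts on fibrant-cofibrant objects, and the stable commutativity reduces to the underived identity $p^*c^*_{L/k} = c^*_{L/K}\res^G_H$ prolonged levelwise to symmetric spectra. This identity is inherited directly from the unstable square. For the second diagram, all four functors are left Quillen, so it suffices to check that $\L p_\#\circ \L c^*_{L/K}$ and $\L c^*_{L/k}\circ \L \ind^G_H$ agree on a set of compact generators; the natural isomorphism $p_\# c^*_{L/K}\iso c^*_{L/k}\ind^G_H$ from \aref{prop:unstable2} prolongs to the suspension spectra of cofibrant objects, and both sides preserve homotopy colimits.

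The main technical point to watch is the change-of-suspension-sphere reconciliation between $\sspt_{S^G}(H\ssetb)$ and $\sspt_{S^H}(H\ssetb)$ (and its motivic analogue) needed to make sense of the vertical functors at the stable level. Once this is settled by Hovey's invertibility criterion, the rest of the argument is formal: apply \aref{lem:functoriality} to lift (\ref{eqn:zwei}) to symmetric spectra, observe that both squares of underived functors commute strictly (or up to canonical isomorphism) by \aref{prop:unstable2}, and conclude that the induced squares of total derived functors commute up to natural isomorphism.
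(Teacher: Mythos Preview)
Your approach is essentially the paper's: prolong the unstable Quillen square (\ref{eqn:zwei}) to symmetric spectra via \aref{lem:functoriality}, use that $p^*$ and $\res^G_H$ are simultaneously left and right Quillen so that $\R p^*=\L p^*$ and $\R\res^G_H=\L\res^G_H$, and then read off commutativity from the underived identities. Two small points deserve cleanup, however.

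First, the paper avoids your change-of-suspension-sphere detour entirely by taking $\sspt_{S^G}(H)$ (i.e.\ $H$-symmetric spectra on the $H$-restriction of $S^G$, which is $S^{dH}$) as the model for $\SH_H$ from the outset. With that choice the whole square of model categories commutes on the nose and no appeal to Hovey's invertibility criterion for $S^H$ is needed. Your route works too, but it is an unnecessary complication.

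Second, two of your justifications are imprecise. The isomorphism $\ind^G_H(X\wedge S^{dH})\cong \ind^G_H(X)\wedge S^G$ is correct, but the reason you give, ``$S^G\cong G_+\wedge_H S^{dH}$,'' is false as stated (the right-hand side is $\ind^G_H\res^G_H S^G$, not $S^G$); the correct justification is the projection formula $\ind^G_H(X\wedge \res^G_H Y)\cong \ind^G_H(X)\wedge Y$ with $Y=S^G$. Also, you assert that $c^*_{L/k}$ and $c^*_{L/K}$ are right Quillen, which is neither established nor needed: for the first square it suffices that $p^*$ and $\res^G_H$ are both left and right Quillen, so that all four arrows are represented by left Quillen functors and the underived equality $p^*c^*_{L/k}=c^*_{L/K}\res^G_H$ passes to derived functors.
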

\begin{proof}
We have a diagram of model categories and Quillen adjunctions between them
$$
\xymatrix{
\sspt_{S^G}(H)
\ar@<-.7ex>[d]_{\ind^{G}_{H}}\ar@<+.7ex>[r]^{c^*_{L/K}} 
& \sspt_{S^{dH}}(K)
\ar@<-.7ex>[d]_{p_{\#}}\ar@<+.7ex>[l]^{}
\ar@<+.7ex>[r]^-{\Sigma^{\infty}_{\P^1}} 
& \sspt_{S^{dH},\P^1}(K) \ar@<-.7ex>[d]_{p_{\#}}
\ar@<+.7ex>[l]^-{} \\
\sspt_{S^G}(G)\ar@<+.7ex>[r]^{c^*_{L/k}} 
\ar@<-.7ex>[u]_{\res^{G}_{H}} 
& \sspt_{S^{G}}(k)
\ar@<+.7ex>[r]^-{\Sigma^{\infty}_{\P^1}}
\ar@<+.7ex>[l]^{}  
\ar@<-.7ex>[u]_{p^*}
& \sspt_{S^{G},\P^1}(k)
\ar@<-.7ex>[u]_{p^*}
\ar@<+.7ex>[l]^-{}.
}
$$
This diagram is commutative and the 
derived functors of the left adjoints give the functors in the diagrams. The commutativity of the second diagram follows immediately. 

For the commutativity of the first square, note that the right adjoints $p^*$ and $\res^{G}_{H}$ are also left adjoints and the stabilization of these functors considered as a left adjoint agrees with their stabilization as a right adjoint and these are also left Quillen functors. It follows that $\R p^* = \L p^*$ and $\R\res^{G}_{H} = \L\res^{G}_{H}$. The desired commutativity thus follows from the underived equality 
$\Sigma^{\infty}_{\P^1}c^*_{L/K}\res^{G}_{H} = p^*\Sigma^{\infty}_{\P^1}c^*_{L/k}$.  
\end{proof}

Now suppose that $k$ is formally real and consider the embedding  $p:k\subseteq k[i]$. A real embedding $\phi:k\hookrightarrow \R$ induces
a complex embedding $\psi:k[i]\hookrightarrow \C$ and hence an associated Betti realization $\RRe_{B,\psi} = 
\psi^{*}\RRe_{B}:\SH_{k[i]}\to \SH$. 

\begin{proposition}\label{prop:bettichange}
 With the notations as above we have 
 $$
 \R \res^{C_{2}}_{\{e\}}\L\RRe_{B,\phi}^{C_{2}} = \L\RRe_{B,\psi}\R p^* \;\;\;\mathrm{and}\;\;\; \L\RRe_{B,\phi}^{C_2}\L p_{\#} = \L\ind^{C_{2}}_{\{e\}}\L\RRe_{B,\psi}.
 $$
\end{proposition}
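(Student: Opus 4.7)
The plan is to establish both identities first at the underived unstable level as equalities of functors on based (motivic or $C_2$-) spaces, then prolong them to $(S^G,\P^1)$-bispectra via \aref{lem:functoriality}, and finally pass to derived functors using that $p^*$ and $\res^{C_2}_{\{e\}}$ are simultaneously left and right Quillen, as was already noted in the proof of \aref{prop:stablecompats}.

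For the first identity $\res^{C_2}_{\{e\}}\RRe^{C_2}_{B,\phi} = \RRe_{B,\psi}\,p^*$, I would first evaluate on a representable $X_+$ with $X\in\Sm/k$: both composites return the pointed space $X(\C)_+$. The left side records the conjugation $C_2$-action using $\phi$ and then forgets it, while the right side base-changes $X$ to $X\times_k k[i]$ and then takes $\C$-points via $\psi=\phi\otimes 1$. Since all four functors preserve colimits, left Kan extension promotes this to an equality of functors on all based motivic spaces. For the second identity $\RRe^{C_2}_{B,\phi}\,p_\# = \ind^{C_2}_{\{e\}}\RRe_{B,\psi}$, I would use the decomposition $k[i]\otimes_k\C\cong\C\times\C$ indexed by the two $k$-algebra embeddings $\psi,\bar\psi:k[i]\hookrightarrow\C$. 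For $Y\in\Sm/k[i]$ this yields a $C_2$-equivariant identification
\[
(p_\# Y)(\C) \;=\; Y_\psi(\C)\sqcup Y_{\bar\psi}(\C)
\]
on which complex conjugation swaps the two summands. Using the coordinatewise complex conjugation homeomorphism $Y_{\bar\psi}(\C)\cong Y_\psi(\C)$ to identify the two summands with $\RRe_{B,\psi}(Y)$, the resulting $C_2$-space is exactly the free $C_2$-space $C_{2+}\wedge Y_\psi(\C)_+ = \ind^{C_2}_{\{e\}}\RRe_{B,\psi}(Y_+)$, and the identity extends by Kan extension.

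To stabilize, I apply \aref{lem:functoriality} to each identity, prolonging across both the $S^G$- and $\P^1$-spectra directions. This requires noting that $\res^{C_2}_{\{e\}} S^{1+\sigma} = S^2 = \RRe_{B,\psi}(\P^1)$ and $\ind^{C_2}_{\{e\}}(S^2) = C_{2+}\wedge S^2 \cong \RRe^{C_2}_{B,\phi}(p_\#\P^1_{k[i]})$, both of which follow from the unstable identities applied to $\P^1$. This produces equalities of the prolonged Quillen functors at the level of stable model categories. Since $p^*$ and $\res^{C_2}_{\{e\}}$ are each both left and right Quillen, their total derived functors agree, $\R p^* \wkeq \L p^*$ and $\R\res^{C_2}_{\{e\}} \wkeq \L\res^{C_2}_{\{e\}}$, while $\RRe^{C_2}_{B,\phi}$, $\RRe_{B,\psi}$, $p_\#$, and $\ind^{C_2}_{\{e\}}$ are all left Quillen, so the equalities of underived left adjoints descend to the claimed equalities of derived functors.

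The principal obstacle I anticipate is in the second identity: one must carefully match the $C_2$-action swapping the components $Y_\psi(\C)$ and $Y_{\bar\psi}(\C)$ with the free swap on $C_{2+}\wedge Y(\C)_+$, ensuring that the conjugation identification $Y_{\bar\psi}(\C)\cong Y_\psi(\C)$ is compatible with the $C_2$-action on $\spec(\C)$. This is essentially a diagram chase involving $\psi$ and $\bar\psi$, but it requires some care; once it is settled on representables, the stabilization and derived-functor statements are routine applications of \aref{lem:functoriality}.
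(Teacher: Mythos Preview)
Your proposal is correct and follows precisely the approach the paper intends: the paper's own proof is a one-line deferral (``a straightforward consequence of the definitions and constructions, as in the previous proposition''), and what you have written is exactly the spelled-out version of that argument---verify the identities on representables, extend by Kan extension, prolong to spectra via \aref{lem:functoriality}, and use that $p^*$ and $\res^{C_2}_{\{e\}}$ are simultaneously left and right Quillen to identify their left and right derived functors. The explicit description of $(p_\# Y)(\C)$ via $k[i]\otimes_k\C\cong\C\times\C$ is the correct way to unwind the second identity, and the caution you flag about matching the swap action with the free $C_2$-action is the only point requiring genuine care.
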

\begin{proof}
 This is a straightforward consequence of the definitions and constructions, as in the previous proposition. 
\end{proof}

\subsection{Betti realization and motivic cohomology}\label{sub:betti2}
We now turn our attention to the equivariant Betti realization of the motivic cohomology spectrum. Following a similar strategy as in \cite{Levine:comparison} in the nonequivariant case, we show that the equivariant Betti realization takes the motivic cohomology spectrum $\HH\Z$ to the Bredon cohomology spectrum $\HH\ul{\Z}$. We then reinterpret the Beilinson-Lichtenbaum conjectures and establish an equivariant version of Suslin-Voevodsky's theorem \cite{SV:singular} on Suslin homology.

\begin{lemma}
 For any $X$ in $\Sch/k$, the natural map 
 $\L\RRe^{C_{2}}_{B,\phi}(\Sigma^{\infty}_{\P^{1}}X_+) \to
 \Sigma^{\infty}_{S^{1+\sigma}}X(\C)_{+}^{an}$ is an isomorphism in $\SH_{C_2}$.
\end{lemma}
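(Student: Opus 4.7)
The argument naturally separates into an unstable identification $\RRe^{C_2}_{B,\phi}(X_+) \simeq X(\C)^{an}_+$ and a stabilization step. The stabilization is largely automatic given the Quillen pair from \aref{sub:betti}: by construction $\RRe^{C_2}_B(\P^1) = S^{1+\sigma}$, and the functoriality of $\Sp(-)$ from \aref{lem:functoriality} guarantees that $\Sp(\RRe^{C_2}_B)(\Sigma^\infty_{\P^1} Y)$ has $n$-th level $\RRe^{C_2}_B(Y) \wedge (S^{1+\sigma})^{\wedge n}$ with the evident structure maps, which is exactly $\Sigma^\infty_{S^{1+\sigma}} \RRe^{C_2}_B(Y)$. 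So once I have the unstable statement (on a suitable cofibrant model) I'm done.

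First I would treat $X \in \Sm/k$. I claim $X_+$ is cofibrant in the closed flasque motivic structure: taking $\mcal{Z} = \emptyset$ (so $\cup\mcal{Z} = \emptyset$ and $f_+: \ast \to X_+$) and $g: \partial\Delta^0 \to \Delta^0$ in the description of $I^{gf}$ recalled in \aref{sub:models} gives the generator $\ast \to X_+$. Hence $\L\RRe^{C_2}_{B,\phi}(X_+) = \RRe^{C_2}_{B,\phi}(X_+)$, and by the defining left Kan extension formula of $\RRe^{C_2}_B$ applied to the representable $X_+$, this is $(X\otimes_k \R)(\C)^{an}_+ = X(\C)^{an}_+$ after passing through $\phi^\ast$.

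For general $X \in \Sch/k$ (recall that $k$ has characteristic zero throughout the paper), I would reduce to the smooth case via resolution of singularities: choose a Nisnevich (smooth) hypercover $X_\bullet \to X$ with each $X_n \in \Sm/k$. Nisnevich descent (built into the closed flasque motivic localization at distinguished squares) gives $X_+ \simeq \hocolim_\Delta (X_n)_+$ in $\MS(k)$, and the corresponding topological hypercover $X_\bullet(\C)^{an} \to X(\C)^{an}$ realizes $X(\C)^{an}_+ \simeq \hocolim_\Delta X_n(\C)^{an}_+$ equivariantly. Since $\L\RRe^{C_2}_{B,\phi}$ is a left Quillen functor it preserves these homotopy colimits, so combining termwise application of the smooth case with these equivalences yields the unstable identification, and the stabilization step from the first paragraph then gives the lemma. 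The main technical nuisance I expect is precisely this passage from smooth to singular $X$: verifying that the relevant hypercover-descent statements hold cleanly in both the closed flasque motivic structure and the equivariant topological category requires invoking resolution of singularities and is the least formal part of the argument.
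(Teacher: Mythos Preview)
Your treatment of the smooth case and the stabilization step is fine and matches the paper. The gap is in the reduction for singular $X$.

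There is no such thing as a Nisnevich hypercover of a singular scheme by smooth schemes: Nisnevich covers are \'etale, and \'etale maps preserve the singular locus. Resolution of singularities does not produce Nisnevich covers; it produces proper birational maps, and the relevant topology is the cdh (or proper cdh) topology. Thus your sentence ``choose a Nisnevich (smooth) hypercover $X_\bullet\to X$'' cannot be made to work, and your appeal to ``Nisnevich descent (built into the closed flasque motivic localization at distinguished squares)'' is beside the point: the motivic model structure only enforces Nisnevich descent, and for singular $X$ the equivalence $\lvert\Sigma^\infty_{\P^1}X_{\bullet\,+}\rvert\simeq \Sigma^\infty_{\P^1}X_+$ is a genuine theorem about cdh descent in $\SH_k$ (this is what the paper cites as \cite{Voevodsky:cdh}). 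Note also that this descent statement is \emph{stable}; the unstable motivic homotopy category does not satisfy cdh descent, so your strategy of first proving an unstable identification and then stabilizing breaks down for singular $X$.

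The topological side has the same issue. A proper cdh cover $X_\bullet\to X$ does not yield an open or \'etale hypercover of $X(\C)^{an}$, so the equivariant equivalence $\lvert X_\bullet(\C)^{an}_+\rvert\simeq X(\C)^{an}_+$ is not formal. The paper handles this by checking the map on geometric fixed points $\Phi^{C_2}$ and $\Phi^{e}$ separately: for each, the realization of a proper cdh cover is a proper surjection of topological spaces, hence a map of universal cohomological descent in the sense of Deligne, and one concludes via singular cohomology. You would need to supply an argument of this type; simply asserting that ``the corresponding topological hypercover realizes $X(\C)^{an}_+\simeq\hocolim_\Delta X_n(\C)^{an}_+$ equivariantly'' is precisely the non-formal step.
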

\begin{proof}
 Since $k$ admits resolution of singularities there is a proper $cdh$ hypercover $X_{\bullet}\to X$ such that each $X_{n}$ is a smooth $k$-scheme. It follows from \cite{Voevodsky:cdh} that 
 $|\Sigma^{\infty}_{\P^{1}}X_{\bullet\,+}|\to \Sigma^{\infty}_{\P^{1}}X_+$ is a stable equivalence in $\SH_{k}$. Each $X_{n\,+}$ is cofibrant. It follows that we have a natural isomorphism $\L\RRe^{C_{2}}_{B,\phi}\Sigma^{\infty}_{\P^1}X_+ \iso 
 |\Sigma^{\infty}_{S^{1+\sigma}}X(\C)_{\bullet\,+}|$ in $\SH_{C_2}$. 
 
To see that $|\Sigma^{\infty}_{S^{1+\sigma}}X(\C)_{\bullet\,+}^{an}| \to \Sigma^{\infty}_{S^{1+\sigma}}X(\C)_{+}^{an}$ is an isomorphism in $\SH_{C_2}$ it suffices to check that this map induces an isomorphism in $\SH$ after applying the geometric fixed points functors $\Phi^{C_2}$ and $\Phi^{e}$. Recall that in general we have that the geometric fixed points of a suspension spectrum is given by the suspension spectrum of the fixed points: $\Phi^{H} \Sigma_{S^{G}}^{\infty} Y = \Sigma^{\infty} Y^{H}$. Therefore we have that the $C_2$-geometric fixed points of the above map is
$|\Sigma^{\infty}X(\R)_{\bullet\,+}^{an}| \to \Sigma^{\infty}X(\R)_{+}^{an}$. If $W\to Y$ is a proper $cdh$-cover of real varieties then $W(\R)^{an}\to Y(\R)^{an}$ is a surjective
proper map. In particular, it is a map of universal cohomological descent \cite[5.3.5]{Deligne:hodgeIII}. It follows that 
$H^{*}(|X(\R)_{\bullet\,+}^{an}|,A) \to H^{*}(X(\R)_{+}^{an},A)$ is an isomorphism for all abelian groups $A$. In particular, 
$|X(\R)_{\bullet\,+}^{an}| \to X(\R)_{+}^{an}$ induces a stable equivalence on suspension spectra. A similar analysis for the $e$-geometric fixed points shows that 
$|\Sigma^{\infty}X(\C)_{\bullet\,+}^{an}| \to \Sigma^{\infty}X(\C)_{+}^{an}$
is a stable equivalence as well.
 \end{proof}

\begin{lemma}
 The natural map $$
 \L\RRe^{C_{2}}_{B,\phi}(\Sigma^{\infty}_{\P^1}\Sym^{N}(\Sigma^{m}_{\P^{1}}Y_+))
 \to \Sigma^{\infty}_{S^{1+\sigma}}\Sym^{N}(\Sigma^{m}_{S^{1+\sigma}}Y(\C)^{an}_{+})
 $$ 
 is an isomorphism in $\SH_{C_2}$ for any $N$, $m$ and any $Y$ in $\sm/k$.
\end{lemma}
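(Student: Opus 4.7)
The plan is to exploit two facts about $\L\RRe^{C_2}_{B,\phi}$: it is a strong symmetric monoidal left Quillen functor, and $\Sym^N(-) = (-)^{\wedge N}/\Sigma_N$ together with $\Sigma^\infty$ is built from smash products and colimits, all of which a strong symmetric monoidal left adjoint preserves.

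First, I would observe that $W := \Sigma^m_{\P^1} Y_+ = (\P^1)^{\wedge m} \wedge Y_+$ is closed flasque cofibrant, since $\P^1$ and $Y_+$ are. The strong symmetric monoidal structure on $\RRe^{C_2}_{B,\phi}$ gives a $\Sigma_N$-equivariant isomorphism
\[
  \RRe^{C_2}_{B,\phi}(W^{\wedge N}) \iso \RRe^{C_2}_{B,\phi}(W)^{\wedge N},
\]
and since $\L\RRe^{C_2}_{B,\phi}$ is a left adjoint it commutes with the colimit $(-)/\Sigma_N$. Taking derived quotients and using that the derived smash power of a cofibrant pointed space with non-degenerate basepoint agrees with its strict smash power (on both sides), one obtains an equivalence $\L\RRe^{C_2}_{B,\phi}(\Sym^N W) \simeq \Sym^N \L\RRe^{C_2}_{B,\phi}(W)$ at the level of motivic spaces respectively $C_2$-spaces.

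Next I would apply $\Sigma^\infty_{\P^1}$, which again commutes with $\L\RRe^{C_2}_{B,\phi}$ up to replacing $\Sigma^\infty_{\P^1}$ by $\Sigma^\infty_{S^{1+\sigma}}$ (by the construction of the prolonged Quillen pair, \aref{lem:functoriality}), and combine with the preceding lemma. The preceding lemma handles suspension spectra of smooth $k$-schemes via a proper $cdh$-hypercover argument; since $\Sym^N(Y^m)$ and the corresponding pointed variants need not be smooth, one invokes the same $cdh$-hyperdescent, replacing the singular symmetric product by a smooth proper $cdh$-hypercover and applying the previous lemma term-wise. The realization of the latter analyzes analytically to the symmetric product of the analytification by the same surjective proper descent argument.

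The main obstacle is precisely the passage to the derived $\Sigma_N$-quotient: one must check that the strict orbit used in the definition of $\Sym^N$ coincides with the homotopy orbit on each side, and that these identifications are compatible under realization. This will follow from cofibrancy of $W$, the freeness of the $\Sigma_N$-action away from the basepoint, and the fact that $\RRe^{C_2}_{B,\phi}$ preserves both strict and homotopy colimits as a left Quillen functor. Once this compatibility is in hand, the asserted isomorphism assembles from the previous lemma.
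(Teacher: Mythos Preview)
Your approach differs from the paper's and contains a genuine gap.  The paper does not try to commute $\L\RRe^{C_2}_{B,\phi}$ with $\Sym^N$ via homotopy orbits.  Instead it uses the homotopy pushout square
\[
\xymatrix{
\Sym^{N}(X,A)\ar[r]\ar[d] & \Sym^{N}(X)\ar[d]\\
\Sym^{N-1}(\Sigma^m_{\P^1}Y_+)\ar[r] & \Sym^{N}(\Sigma^m_{\P^1}Y_+)
}
\]
with $X=(\P^1)^m\times Y$ and $A$ the locus where some $\P^1$-coordinate is $\infty$.  The two top vertices are honest (generally singular) $k$-schemes, so the preceding lemma applies to them directly; the lower-left vertex is handled by induction on $N$.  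Since $\L\RRe^{C_2}_{B,\phi}$ preserves homotopy pushouts, the result follows.  No statement about strict versus homotopy $\Sigma_N$-orbits is needed.

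The gap in your argument is exactly the step you flag as ``the main obstacle''.  Your proposed resolution rests on ``freeness of the $\Sigma_N$-action away from the basepoint'', but this is false: the diagonal in $W^{\wedge N}$ is fixed by $\Sigma_N$ and is not contained in the basepoint.  Consequently the strict orbit $W^{\wedge N}/\Sigma_N$ does not model the homotopy orbit, and cofibrancy of $W$ alone does not give you that $W^{\wedge N}$ is $\Sigma_N$-projectively cofibrant.  Your fallback, invoking $cdh$-hyperdescent for ``the singular symmetric product'', is also off target: $\Sym^N(\Sigma^m_{\P^1}Y_+)$ is a pointed quotient presheaf, not a scheme, so one cannot simply choose a $cdh$-hypercover of it.  The paper's pushout square is precisely the device that replaces the pointed symmetric power by genuine schemes $\Sym^N(X)$ and $\Sym^N(X,A)$, where the previous lemma and its $cdh$-descent argument legitimately apply.
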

\begin{proof}
 The argument is identical to \cite[Lemma 5.4]{Levine:comparison}. The key point is that there is a homotopy pushout square in $\MS(k)$ of the form
 $$
 \xymatrix{
 \Sym^{N}(X, A) \ar[r]\ar[d] & 
 \Sym^{N}(X) \ar[d] \\
 \Sym^{N-1}(\Sigma^m_{\P^1}Y_+) \ar[r] & \Sym^{N}(\Sigma^m_{\P^1}Y_+)
 }
 $$
 where $X= (\P^{1})^m\times Y_+$ and $A$ is the closed subscheme of points 
 $(x_1,\ldots, x_m, y)$ such that some $x_i = \infty$.  The previous lemma applied to the top two vertices and induction on $N$ applied to the lower left vertex yields the result.
\end{proof}

 As in \cite{Levine:comparison} we write  
$$
 (\Sigma^{\infty}_{\P^{1}}X_+)^{tr}_{eff}:= (\Sym^{\infty}X_+, \Sym^{\infty}(\Sigma_{\P^{1}}X_{+}), \Sym^{\infty}(\Sigma_{\P^1}^2X_+),\ldots)
$$
and together with the obvious structure maps. 
Similarly for a $C_2$-space $W$ we have the $C_2$-spectrum $(\Sigma^{\infty}_{S^{1+\sigma}}W_+)^{tr}_{eff}:= 
\{ \Sym^{\infty}(\Sigma_{S^{1+\sigma}}^mW_+)\}_{m\geq 0}$, equipped with the obvious structure maps. 

\begin{proposition}
For any smooth $X$ there is a natural isomorphism in $\SH_{C_2}$ 
 $$
 \L\RRe^{C_2}_{B,\phi} (\Sigma^{\infty}_{\P^{1}}X_+)^{tr}_{eff} \iso (\Sigma^{\infty}_{S^{1+\sigma}}X(\C)^{an}_+)^{tr}_{eff}.
 $$
\end{proposition}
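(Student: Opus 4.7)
The plan is to upgrade the previous lemma from finite symmetric products $\Sym^N$ to the infinite symmetric product $\Sym^{\infty}$, and then check that the resulting levelwise isomorphisms in $\SH_{C_2}$ assemble into a genuine isomorphism of spectra, i.e., are compatible with the structure maps defining the two $\P^1$-spectra (respectively, $S^{1+\sigma}$-spectra).

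First, I would observe that the $m$-th level of $(\Sigma^{\infty}_{\P^{1}}X_+)^{tr}_{eff}$ is the motivic space $\Sym^{\infty}(\Sigma^m_{\P^1}X_+)$, expressed as a filtered colimit $\colim_N \Sym^N(\Sigma^m_{\P^1}X_+)$ along the canonical closed inclusions $\Sym^N \hookrightarrow \Sym^{N+1}$. These inclusions are closed immersions and hence cofibrations in the closed flasque model structure on $\MS(k)$; analogously the equivariant inclusions on the topological side are equivariant CW-inclusions. In either setting the filtered colimit therefore computes the homotopy colimit.

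Next, since $\L\RRe^{C_{2}}_{B,\phi}$ is a left Quillen functor it preserves homotopy colimits, and since the previous lemma is natural in $N$, we obtain for every $m \geq 0$ an isomorphism in $\SH_{C_2}$:
$$
\L\RRe^{C_{2}}_{B,\phi}\bigl(\Sigma^{\infty}_{\P^{1}}\Sym^{\infty}(\Sigma^m_{\P^1}X_+)\bigr) \xrightarrow{\iso} \Sigma^{\infty}_{S^{1+\sigma}}\Sym^{\infty}(\Sigma^m_{S^{1+\sigma}}X(\C)^{an}_+).
$$
This identifies the $m$-th levels of the two spectra after realization.

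Finally, to assemble these levelwise identifications into a single isomorphism of spectra, I would verify that they are compatible with the structure maps $\P^1 \wedge \Sym^{\infty}(\Sigma^m_{\P^1}X_+) \to \Sym^{\infty}(\Sigma^{m+1}_{\P^1}X_+)$ and their topological counterparts $S^{1+\sigma} \wedge \Sym^{\infty}(\Sigma^m_{S^{1+\sigma}}X(\C)^{an}_+) \to \Sym^{\infty}(\Sigma^{m+1}_{S^{1+\sigma}}X(\C)^{an}_+)$. This uses the strong symmetric monoidality of $\L\RRe^{C_{2}}_{B,\phi}$, the identification $\L\RRe^{C_{2}}_{B,\phi}(\P^1) \iso S^{1+\sigma}$, and the naturality of the previous lemma in the suspension index $m$. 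The main obstacle will be this bookkeeping: while the previous lemma gives the identification functorially in $N$, verifying that the structure maps in the spectrum direction intertwine with the isomorphism requires tracking how the realization interacts simultaneously with $\Sigma_{\P^1}$ and $\Sym^{\infty}$ — essentially that the canonical map $Z \wedge \Sym^{\infty}(W) \to \Sym^{\infty}(Z \wedge W)$ (for $Z$ a sphere) is sent by the realization to its equivariant analog, which follows from symmetric monoidality but needs to be checked at the homotopy-colimit level.
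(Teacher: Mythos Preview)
Your approach is correct and covers the same ground as the paper, but the paper's argument is more economical on exactly the point you flag as the main obstacle. Rather than checking by hand that the levelwise isomorphisms in $\SH_{C_2}$ are compatible with the structure maps, the paper invokes the standard presentation of any spectrum $E$ as a homotopy colimit of shifted suspension spectra of its levels,
\[
  E \;\simeq\; \colim_n\,(\Sigma^{\infty}E_n)[n],
\]
where $(D[n])_i = D_{i-n}$ and the transition maps are induced by the structure maps of $E$. The same formula holds on the equivariant side. Since $\L\RRe^{C_2}_{B,\phi}$ preserves homotopy colimits and shifts, the proposition follows immediately from the previous lemma (together with your colimit over $N$ to pass from $\Sym^N$ to $\Sym^{\infty}$, which the paper leaves implicit).

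The advantage of the paper's route is that the ``compatibility with structure maps'' you want to verify is already encoded in the transition maps of that directed system, so it is handled automatically by the fact that a left derived functor commutes with homotopy colimits. Your route would instead have to promote the levelwise isomorphisms in $\SH_{C_2}$ to an honest map of spectra; this is possible here because everything arises from a point-set natural transformation, but making that precise is exactly the bookkeeping the colimit formula lets you skip. In short: your plan works, but the formula $E\simeq\colim_n(\Sigma^{\infty}E_n)[n]$ is the clean way to execute your final paragraph.
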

\begin{proof}
 We have the natural isomorphism 
 $\colim_{n}(\Sigma^{\infty}_{\P^{1}}E_{n})[n]\iso E$ in $\SH_{k}$, where $D[n]$ is the shifted spectrum given by $(D[n])_{i} = D_{i-n}$. Similarly we have the natural isomorphism  
 $\colim_{n}(\Sigma^{\infty}_{S^{1+\sigma}}F_{n})[n]\iso F$ in $\SH_{C_2}$. Since $\L\RRe^{C_2}_{B,\phi}$ preserves homotopy colimits and shifts, the result follows from the previous lemma.
\end{proof}

\begin{theorem}\label{thm:bettibredon}
 Let $\Lambda$ be an abelian group. There is an isomorphism in $\SH_{C_2}$
 $$
 \L\RRe^{C_2}_{B,\phi}(\HH \Lambda) \iso \HH\ul{\Lambda}.
 $$
\end{theorem}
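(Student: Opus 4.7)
The plan is to leverage the Suslin--Voevodsky symmetric-product model of $\HH\Lambda$ together with the previous proposition, and then identify the result via an equivariant Dold--Thom theorem.

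First, I would recall that by Voevodsky's construction (via Suslin's algebraic singular complex and the equivalence between effective motives and motivic cohomology), the motivic Eilenberg--MacLane spectrum $\HH\Lambda$ admits a model of the form $(\Sigma^{\infty}_{\P^1}\spec(k)_{+})^{tr}_{eff}\otimes \Lambda$, i.e.~the spectrum whose $n$-th level is $\Sym^{\infty}((\P^1)^{\wedge n})$ with $\Lambda$-coefficients and the obvious bonding maps. This is essentially the observation used by Levine in the nonequivariant setting (\cite[\S 5]{Levine:comparison}), and it applies verbatim over $k$.

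Second, I would apply the preceding proposition to conclude that $\L\RRe^{C_2}_{B,\phi}(\HH\Lambda)$ is isomorphic in $\SH_{C_2}$ to $(\Sigma^{\infty}_{S^{1+\sigma}}\spec(k)(\C)^{an}_{+})^{tr}_{eff}\otimes \Lambda$, i.e.~the $C_2$-spectrum whose $n$-th level is $\Sym^{\infty}((S^{1+\sigma})^{\wedge n})$ with $\Lambda$-coefficients. The coefficient group $\Lambda$ is inserted in both settings by taking the same colimit-of-summands construction; since $\L\RRe^{C_2}_{B,\phi}$ commutes with homotopy colimits and with finite wedges, this passes through the realization without incident.

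Third, I would invoke the equivariant Dold--Thom theorem (due to dos Santos and Lima--Filho, and anticipated earlier by Nakaoka and McCord) which states that for a based $C_2$-CW complex $W$ the infinite symmetric product $\Sym^{\infty}_\Lambda(W)$ models the smash of $W$ with the Bredon--Eilenberg--MacLane spectrum associated to the constant Mackey functor $\ul{\Lambda}$. Applied to $W = (S^{1+\sigma})^{\wedge n}$ this identifies the $n$-th level of the spectrum produced in the previous step with $(\HH\ul{\Lambda}\wedge S^{n(1+\sigma)})_{0}$ up to equivariant weak equivalence, and checking compatibility with the bonding maps exhibits the entire spectrum as $\HH\ul{\Lambda}$. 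Assembling the three steps gives the desired natural isomorphism $\L\RRe^{C_2}_{B,\phi}(\HH\Lambda)\iso \HH\ul{\Lambda}$ in $\SH_{C_2}$.

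The main obstacle is the last step: one must verify that the equivariant Dold--Thom equivalences fit together coherently as the symmetric powers of suspensions by the regular representation sphere $S^{1+\sigma}$ are assembled into a spectrum, and that the resulting $\Omega$-spectrum structure matches the one defining $\HH\ul{\Lambda}$ (equivalently, one needs to know that $\Sym^\infty$ on representation spheres gives the correct equivariant infinite loop space structure). Modulo this (classical) input, the argument is a direct transcription of Levine's nonequivariant strategy, with the key technical work having already been carried out in the lemmas immediately preceding the theorem.
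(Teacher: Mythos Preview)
Your proposal is essentially the same as the paper's proof: both identify $\HH\Lambda$ with the symmetric-product spectrum $(\SS_k)^{tr}_{eff}$ (with coefficients), apply the preceding proposition to realize this as $(\SS_{C_2})^{tr}_{eff}$, and then identify the latter with $\HH\ul{\Lambda}$ via an equivariant Dold--Thom statement. The only minor differences are that the paper first reduces to $\Lambda=\Z$ by writing $\HH\Lambda=\HH\Z\wedge \MM\Lambda$ (cleaner than carrying $\Lambda$ through the symmetric-product construction), and that it disposes of your ``main obstacle'' by citing two specific results: \cite[Proposition 3.7]{DS:equiDT} for the model $\{\Z S^{n(1+\sigma)}\}$ of $\HH\ul{\Z}$, and \cite[Corollary A.7]{Dugger:krss} for the equivalence $(\SS_{C_2})^{tr}_{eff}\simeq \{\Z S^{n(1+\sigma)}\}$.
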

\begin{proof}
Since $\HH \Lambda = \HH\Z \wedge \MM \Lambda$ and $\HH\ul{\Lambda} = \HH\ul{\Z}\wedge \MM \Lambda$, where 
$\MM \Lambda$ is a Moore spectrum for $\Lambda$, and $\L\RRe^{C_2}_{B,\phi}(\MM \Lambda) = \MM \Lambda$, it suffices to establish the result for $A=\Z$.
 The motivic cohomology spectrum $\HH\Z$ is given by 
 $\HH\Z_{n} = \Z^{tr}((\P^1)^{\wedge n})$ and equipped with the obvious structure maps. The natural map $(\SS_{k})^{tr}_{eff}\to \HH\Z$ is an isomorphism in $\SH_{k}$ by \cite[Lemma 5.9]{Levine:comparison}.
 It follows from \cite[Proposition 3.7]{DS:equiDT} that the spectrum
 $\{\Z S^{n(1+\sigma)}\}_{n\geq 0}$ is a model for $\HH\ul{\Z}$, \emph{i.e.}~ it represents Bredon cohomology with coefficients in the constant Mackey functor $\ul{\Z}$. 
 It follows from 
 \cite[Corollary A.7]{Dugger:krss} that the natural map $(\SS_{C_2})^{tr}_{eff} \to \{\Z S^{n(1+\sigma)}\}_{n\geq 0}$ is an equivariant weak equivalence. By the previous proposition, $\L\RRe^{C_{2}}_{B,\phi}((\SS_{k})^{tr}_{eff}) = (\SS_{C_2})^{tr}_{eff}$ and the result follows.
 \end{proof}

The Beilinson-Lichtenbaum conjectures assert that for any smooth variety $X$ over a field $k$, any $n>1$, and $q\geq 0$,  the generalized cycle map
$$
H_{\mcal{M}}^{p+q\alpha}(X,\Z/n) \to 
H^{p+q}_{\et}(X, \mu_{n}^{\otimes q})
$$
is an isomorphism for $p\leq 0$ and an injection for $p=1$. By a theorem of Suslin-Voevodsky \cite{SV:BK}, these conjectures are equivalent to the Bloch-Kato conjectures. In turn, these have been resolved by Voevodsky in case $n=2^{\ell}$ and in general by Voevodsky and Rost. Suppose now that $k=\R$. The \'etale cohomology (with finite coefficients) of the real variety $X$ can be identified with the Borel cohomology of $X(\C)$. On the other hand $\RRe^{C_{2}}_{B}$ induces a comparison map between motivic cohomology and Bredon cohomology and we would like to reinterpret the Beilinson-Lichtenbaum conjectures as a statement concerning this comparison. 
When $2$ is invertible in the coefficient group this is straightforward.
In \cite{HV:VT} the first author and M. Voineagu treat the case of coefficient group $\Z/2^{\ell}$ by carefully comparing various cycle maps together with a computation that Bredon and Borel cohomology agree in the appropriate range. This reinterpretation of Voevodsky's theorem applies more generally to the Betti realization for an embedding of a real closed field into $\R$. 

\begin{theorem}\label{thm:BL}
 Let $\phi:k\hookrightarrow\R$ be an embedding with $k$ real closed  and $X$ a smooth $k$-variety. For any $n\geq 1$, and any 
 $q\geq 0$ the map
 $$
 H^{s+q\alpha}_{\mcal{M}}(X,\Z/n) \to H^{s+q\sigma}(X(\C),\ul{\Z/n}),
 $$
 induced by $\RRe^{C_{2}}_{B,\phi}$, is an isomorphism for $s\leq 0$ and an injection for $s=1$.
\end{theorem}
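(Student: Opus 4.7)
The plan is to deduce the theorem from the Voevodsky--Rost proof of the Beilinson--Lichtenbaum conjecture together with the $p=2$ result of \cite{HV:VT} over $k=\R$. By the Chinese Remainder Theorem we reduce to $n = p^\ell$ a prime power, which we handle separately for odd $p$ and for $p=2$.

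For odd $p$: since $2$ is invertible in $\Z/p^\ell$, the transfer in the constant Mackey functor $\ul{\Z/p^\ell}$ is an isomorphism, so Bredon cohomology $H^{s+q\sigma}(X(\C),\ul{\Z/p^\ell})$ agrees with Borel $C_2$-cohomology of $X(\C)$ with coefficients twisted by $\sigma^{\otimes q}$. Since complex conjugation acts on $\mu_{p^\ell}$ by inversion, this twist matches the Tate twist, so the Borel cohomology is identified with \'etale cohomology $H^{s+q}_{\et}(X, \mu_{p^\ell}^{\otimes q})$. Under the translation $H^{s+q\alpha}_{\mcal{M}} = H^{s+q, q}_{\mcal{M}}$ between the paper's bigrading and the standard motivic bigrading, the range $s \le 0$ (respectively $s = 1$) is exactly the Beilinson--Lichtenbaum range $p \le q$ (respectively $p = q + 1$), so the desired isomorphism and injection follow from the theorem of Voevodsky and Rost.

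For $p = 2$: the statement for $k = \R$ is proved in \cite{HV:VT} by explicitly comparing cycle maps and checking that Bredon and Borel cohomology agree in the relevant range. To extend this to a general real closed subfield $k \hookrightarrow \R$, we use base change. The right-hand side is unchanged, since $X(\C) = X_\R(\C)$ as $C_2$-spaces. For the left-hand side, $\R/k$ is a separable extension in characteristic zero, hence $\R$ is ind-smooth over $k$ (a filtered colimit of finitely generated, hence smooth, subextensions). Essentially smooth base change for motivic cohomology with finite coefficients, in the spirit of \cite[Lemma A.7]{Hoyois:cob2mot}, then gives $H^{s+q\alpha}_{\mcal{M}}(X, \Z/2^\ell) \iso H^{s+q\alpha}_{\mcal{M}}(X_\R, \Z/2^\ell)$. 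Naturality of the $C_2$-equivariant Betti realization in the base reduces the $p=2$ statement over $k$ to the case $k = \R$.

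The principal obstacles are (i) carefully matching the sign-representation twist on the topological side with the Tate twist on the \'etale side for odd primes, so that the Bredon/Borel/\'etale comparison correctly transports the Voevodsky--Rost statement, and (ii) verifying that motivic cohomology with finite coefficients satisfies essentially smooth base change in the required generality, so that the $C_2$-Betti realization commutes with the reduction from $k$ to $\R$. Both are standard in spirit, but the bookkeeping of bigradings and coefficient twists across motivic, \'etale, and Bredon cohomology is the main subtlety.
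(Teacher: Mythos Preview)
Your proposal is correct in outline, but the paper takes a different and somewhat slicker route, particularly for odd primes.

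For the reduction from $k$ to $\R$, the paper invokes the rigidity theorem of R\"ondigs--{\O}stv{\ae}r for pretheories with transfers (with a footnote verifying that their argument applies to real closed $k\subseteq\R$ even though $\R$ carries no nontrivial henselian valuation), rather than essentially smooth base change. This handles all coefficients uniformly at the outset, so the odd-prime and $2$-primary cases are both argued over $\R$.

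For coefficients in which $2$ is invertible, instead of your Bredon $=$ Borel $=$ \'etale chain, the paper uses a transfer trick: pulling back along $p:\spec(\C)\to\spec(\R)$ and then pushing forward gives multiplication by $2$ on both motivic and Bredon cohomology, hence an isomorphism. This sandwiches the equivariant comparison map between two copies of the \emph{nonequivariant} Betti comparison $H^{s+q\alpha}_{\mcal{M}}(X_{\C},\Z/n)\to H^{s+q}_{\mathrm{sing}}(X(\C),\Z/n)$, which is the classical form of Beilinson--Lichtenbaum over $\C$. The payoff is that the paper entirely avoids the obstacle you flag as (i): there is no need to match the $\sigma$-twist with the Tate twist or to check that the Bredon/Borel/\'etale identifications are compatible with the specific map induced by $\RRe^{C_2}_{B,\phi}$, because the argument never leaves Betti realization. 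Your route would also work, but finishing it requires exactly that compatibility-of-cycle-maps verification, which you correctly isolate as the main subtlety and do not carry out.

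For $p=2$ the two arguments coincide: reduce to $\R$ and cite \cite{HV:VT}.
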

\begin{proof}
 Motivic cohomology forms a pretheory with transfers. Applying  \cite[Theorem 1]{RO:rigidity},\footnote{This rigidity result is stated for dense subfields of a henselian valued field. Unfortunately $\R$ can't be equipped with a nontrivial henselian valuation.
 However, the proof of their result relies only on the density lemma 
 \cite[Lemma 1]{RO:rigidity} which is valid for a real closed subfield of $\R$, with the classical topology. This is well-known, see e.g.~ \cite[Lemma 4]{KatoSaito} for a proof.} 
 we have that the base change $\phi^*:H^{s+q\alpha}_{\mcal{M}}(X,\Z/n)\to H^{s+q\alpha}_{\mcal{M}}(X_{\R},\Z/n)$ is an isomorphism so it suffices to treat the case $k=\R$.

 Suppose that $2$ is invertible in $\Z/n$ and write $p:\spec(\C)\to \spec(\R)$ for the canonical map. Using \aref{prop:bettichange} we have the commutative diagram induced by $X_{\C}\to X$
 $$
 \xymatrix{
 H^{s+q\alpha}_{\mcal{M}}(X,\Z/n) \ar[r]^{p^*}\ar[d]_{\RRe^{C_2}_{B}} & H^{s+q\alpha}_{\mcal{M}}(X_{\C},\Z/n) \ar[r]^{p_\#}\ar[d]^{\RRe_{B}} &
 H^{s+q\alpha}_{\mcal{M}}(X,\Z/n) \ar[d]^{\RRe_{B}^{C_2}} \\
  H^{s+q\sigma}(X(\C),\ul{\Z/n}) \ar[r] &
   H^{s+q}_{\mathrm{sing}}(X(\C),\Z/n)\ar[r] &
    H^{s+q\sigma}(X(\C),\ul{\Z/n}).
 }
 $$
 The middle arrow is an isomorphism for $s\leq 0$ and an injection for $s=1$.
 The horizontal maps are multiplication by $2$, hence isomorphisms. The result thus follows for coefficient groups in which $2$ is invertible. 
  
 It remains to treat the case $\Z/2^{\ell}$.
 The cycle map $H^{s+q\alpha}_{\mcal{M}}(X,\Z/2^{\ell}) \to H^{s+q\sigma}(X(\C),\ul{\Z/2^{\ell}})$
 considered in \cite{HV:VT} 
 is  induced by the map of simplicial abelian groups (for $q\geq 0$)
 $$
\frac{ \Hom_{\R}(X\times \Delta^{\bullet}_{\R}, \Sym^{\infty}\P^{n})^{+}}{\Hom_{\R}(X\times \Delta^{\bullet}_{\R}, \Sym^{\infty}\P^{n-1})^{+}} \to \Hom_{C_{2}\btop}((X(\C)\times \Delta^{\bullet}_{top})_+, \Z (S^{n(1+\sigma)}))
 $$  
 obtained by sending an algebraic map of real varieties to its associated equivariant continuous map of $C_{2}$-spaces. This agrees with the map considered here. By 
 \cite[Theorem 1.5, Proposition 5.1]{HV:VT} it induces an isomorphism for $s\leq 0$ and an injection for $s=1$.
  \end{proof}

 We finish with an equivariant version of Suslin-Voevodsky's theorem \cite{SV:singular} that over an algebraically closed field Suslin homology agrees with \'etale homology. To set the stage, fix a real embedding $\phi:k\hookrightarrow \R$ and consider the subcategory  of motivic spectra  $X$ such that 
 $ \L\RRe^{C_{2}}_{B,\phi}$ induces an isomorphism $[S^{n},X]_{k} \iso
 [S^{n}, \L\RRe^{C_{2}}_{B,\phi}(X)]_{C_{2}}$ 
for all $n$. This is a localizing subcategory of $\SH_{k}$ and we show that it contains all effective torsion motives. 
 If the motivic slice tower were convergent we would be able to show more 
 generally that it contains all effective torsion motivic spectra (i.e.~ the localizing subcategory generated by 
 $\Sigma^{s}_{S^1}\Sigma^{t}_{\P^{1}}\Sigma^{\infty}_{\P^{1}}X/N$ for any 
 $s\in \Z$, $t\geq 0$, $N>1$, and smooth $X$). 
 \begin{theorem}\label{thm:SV}
Let $k$ be a real closed field and $\phi:k\hookrightarrow\R$ be an embedding. Let $E$ be in the smallest localizing subcategory of $\SH_{k}$ containing $X_{+}\wedge \HH\Z/r$ for  any smooth projective $X$ and $r>1$. 
Then for any $n$, the equivariant Betti realization induces an isomorphism
\begin{equation*}
\RRe^{C_{2}}_{B,\phi}:[S^{n}, E ]_{k} \xrightarrow{\iso} 
[S^{n}, \RRe^{C_{2}}_{B,\phi}(E)]_{C_2}.
\end{equation*}
 \end{theorem}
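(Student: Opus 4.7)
The plan is to reduce to the generators via a localizing subcategory argument, translate both sides into cohomology groups by Poincar\'e duality, and then invoke Theorem~\ref{thm:BL}.

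First I would observe that the full subcategory $\mcal{L}\subseteq\SH_k$ of those $E$ for which $\RRe^{C_2}_{B,\phi}\colon[S^n,E]_k\to[S^n,\RRe^{C_2}_{B,\phi}(E)]_{C_2}$ is an isomorphism for every $n\in\Z$ is a localizing subcategory: both hom-functors convert cofiber sequences to long exact sequences and coproducts to direct sums, while $\RRe^{C_2}_{B,\phi}$ is triangulated and preserves coproducts, so by the five lemma $\mcal{L}$ is closed under cofibers, and closure under retracts and arbitrary coproducts is clear. It therefore suffices to verify that $X_+\wedge\HH\Z/r\in\mcal{L}$ for each smooth projective $X$ of dimension $d$ and each $r>1$.

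For such a generator, Spanier--Whitehead/Atiyah duality combined with the Thom isomorphism for the oriented spectrum $\HH\Z$ gives $D(X_+)\wedge\HH\Z\simeq X_+\wedge S^{-d-d\alpha}\wedge\HH\Z$, whence
\[
[S^n,X_+\wedge\HH\Z/r]_k\;\cong\;H^{(d-n)+d\alpha}_{\mcal{M}}(X,\Z/r).
\]
The analogous equivariant calculation applies to $X(\C)^{an}$, which is a closed smooth $C_2$-manifold whose tangent bundle is an equivariantly complex rank-$d$ bundle; since $\HH\ul{\Z}$ admits a Thom isomorphism for such bundles (via its complex orientability in the Landweber--Kronholm sense), we get
\[
[S^n,X(\C)^{an}_+\wedge\HH\ul{\Z/r}]_{C_2}\;\cong\;H^{(d-n)+d\sigma}_{C_2}(X(\C)^{an},\ul{\Z/r}).
\]
By \aref{thm:bettibredon} and the compatibility of $\RRe^{C_2}_{B,\phi}$ with these Thom classes (each arising from the complex structure on the tangent bundle), the Betti map reduces, under these dualities, to the motivic-to-Bredon comparison map treated in Theorem~\ref{thm:BL}.

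Applying \aref{thm:BL}, this comparison map is an isomorphism whenever $s:=d-n\leq 0$, i.e.\ when $n\geq d$. For $n<0$ both sides vanish by dimension: the motivic cohomology by the Bloch--Morel vanishing $H^{p,q}_{\mcal{M}}(X)=0$ for $p>q+\dim X$, and the Bredon cohomology by the cellular dimension of $X(\C)^{an}$. The hard part will be the intermediate range $0\leq n<d$, in which \aref{thm:BL} does not directly supply the iso yet both groups can be nontrivial; to bridge this range one can reduce, via the cellular or hyperresolution decomposition of the smooth projective $X$ into pieces whose motives split into shifted Tate twists, to the case $X=\spec k$, where the explicit matching between the motivic cohomology of $\spec k$ for $k$ real closed and the Hu--Kriz description of the Bredon cohomology of a point yields the iso in the desired wider range.
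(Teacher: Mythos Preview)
Your localizing subcategory reduction and the Poincar\'e/Thom duality translation are both fine, and they do reduce the problem to comparing $H^{(d-n)+d\alpha}_{\mcal{M}}(X,\ZZ/r)$ with $H^{(d-n)+d\sigma}_{C_2}(X(\C),\ul{\ZZ/r})$. The genuine gap is exactly where you flag it: for $0\le n<d$, i.e.\ $1\le s\le d$, \aref{thm:BL} gives at best an injection (only at $s=1$) and nothing for $s>1$, yet both groups are typically nonzero in this range. Your proposed remedy---decomposing the motive of a smooth projective $X$ into shifted Tate twists---does not work in general. Over a real closed field the Chow motive of, say, a positive-genus curve or an abelian variety is \emph{not} a sum of Tate motives, and no hyperresolution helps since $X$ is already smooth. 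So the hard range is not covered, and the argument as written does not prove the theorem.

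The paper avoids this obstruction entirely by working on the \emph{homology} side rather than passing through cohomology via duality. After reducing to $k=\RR$ by rigidity, one identifies $[S^{n},X_{+}\wedge\HH\ZZ/r]_{k}$ with $\pi_{n}$ of the simplicial abelian group $\ZZ^{tr}(X)(\Delta^{\bullet}_{\RR})\otimes\ZZ/r$ (Suslin homology) and the target with $\pi_{n}$ of $\sing_{\bullet}(\ZZ X(\C))^{C_{2}}\otimes\ZZ/r$. The comparison is then a Friedlander--Walker-style argument: one introduces the functor $G(-)=\ZZ^{tr}(X)(-\times\Delta^{\bullet}_{\RR})\otimes\ZZ/r$, identifies $G(\Delta^{\bullet}_{top})$ with the equivariant singular side, and shows $G(\RR)\to G(\Delta^{\bullet}_{top})$ is a homotopy equivalence. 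This gives the isomorphism for \emph{all} $n$ at once and never needs the Beilinson--Lichtenbaum range restriction. The moral is that the result you want is an equivariant Suslin--Voevodsky theorem, and the natural proof is a direct homology comparison rather than a dualize-and-use-cohomology strategy.
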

\begin{proof}
It suffices to show that $[S^{n}, X\wedge \HH \Z/r ]_{k} \to 
[S^{n}, X_{\R}(\C)\wedge \HH\ul{\Z/r}]_{C_2}$ is an isomorphism for any smooth projective $X$.
As in the previous theorem, using \cite[Theorem 1]{RO:rigidity}, we are reduced to the case $k=\R$. 
Tracing through definitions, it suffices to show that the map
 \begin{equation*}
\Z^{tr}(X)(\Delta^{\bullet}_{\R})\otimes\Z/r=\Hom_{\R}(\Delta^{\bullet}_{\R}, \Sym^{\infty}X)^{+}\otimes\Z/r
\to \Hom_{C_{2}\btop}(\Delta^{\bullet}_{top}, \Z X(\C))\otimes \Z/r
 \end{equation*}
of simplicial abelian groups, obtained by sending an algebraic map of real varieties to its associated equivariant continuous map of $C_{2}$-spaces, is a homotopy equivalence. Note that this last simplicial abelian group equals
$\sing_{\bullet}(\Z X(\C))^{C_{2}}\otimes\Z/r$.

That this map is a homotopy equivalence can be deduced by a variant of some arguments of Friedlander-Walker \cite{FW:ratisos} as follows.
First, for a presheaf $F$ on $\sch/\R$,  define $F(\Delta^{d}_{top}) = \colim_{\Delta^{d}_{top}\to W(\R)}F(W)$ where the colimit ranges over continuous maps and $W$ a finite type real variety. Note that if $F$ is the presheaf represented by a real variety $Y$ then $F(\Delta^{\bullet}_{top}) = \sing_{\bullet}Y(\R)$. 
Consider the presheaf of simplicial abelian groups
$$   
G(-):=  
\Z^{tr}(X)(-\times\Delta^{\bullet}_{\R})\otimes \Z/r.
$$  
Note that $G(\Delta^{\bullet}_{top})$ and $[\Hom_{C_{2}\btop}(\Delta^{\bullet}_{top}, \Sym^{\infty}X(\C))]^{+}\otimes\Z/r$ are naturally homotopic. 
Combined with Quillen's theorem \cite[Appendix Q]{FM:filtrations} on homotopy group completions of simplicial abelian monoids and the fact that $(\Z X(\C))^{C_2}$ is the homotopy group completion of 
$(\mathbb{N} X(\C))^{C_{2}}$,  we find that there is a natural homotopy equivalence of simplicial abelian groups 
$G(\Delta^{\bullet}_{top}) \wkeq \sing_{\bullet}(\Z X(\C))^{C_2}\otimes \Z/r$. It thus suffices to show that the map 
$G(\R)\to G(\Delta^{\bullet}_{top})$ (induced by the projections $\Delta^{d}_{top}\to \ast$) is a homotopy equivalence. This is easily seen via the same argument as in  \cite[Proposition 5.1]{HV:VT}.
 \end{proof}

\bibliographystyle{plain}
\bibliography{E2M}

\end{document}